\newtheorem{thm}[subsection]{Theorem}
\newtheorem{defn}[subsection]{Definition}
\newtheorem{prop}[subsection]{Proposition}
\newtheorem{cor}[subsection]{Corollary}
\newtheorem{lemma}[subsection]{Lemma}
\newtheorem{conj}[subsection]{Conjecture}
\newtheorem{remark}[subsection]{Remark}
\newtheorem{assumption}[subsection]{Assumption}
\theoremstyle{definition}
\numberwithin{equation}{section}
\def\Z{{\bf Z}}
\def\C{{\bf C}}
\def\Q{{\bf Q}}
\def\cT{{\cal T}}
\def\cP{{\cal P}}
\def\cA{{\cal A}}
\def\C{{\mathbb C}}  
\def\N{{\mathbb N}}  
\def\Q{{\mathbb Q}}  
\def\R{{\mathbb R}}  
\def\Z{{\mathbb Z}}  
\def\cA{{\mathcal A}}
\def\cC{{\mathcal C}}
\def\cE{{\mathcal E}}
\def\cF{{\mathcal F}}
\def\cH{{\mathcal H}}
\def\cM{{\mathcal M}}
\def\cN{{\mathcal N}}
\def\cP{{\mathcal P}}
\def\cR{{\mathcal R}}
\def\cS{{\mathcal S}}
\def\cT{{\mathcal T}}
\def\cW{{\mathcal W}}
\def\cY{{\mathcal Y}}
\def\gg{{\mathfrak g}}
\def\gl{{\mathfrak l}}
\def\go{{\mathfrak o}}
\def\gp{{\mathfrak p}}
\def\gs{{\mathfrak s}}
\def\gA{{\mathfrak A}}
\def\gB{{\mathfrak B}}
\def\gC{{\mathfrak C}}
\def\one{\mathbf{1}}
\def\unit{{{\bf 1}_\cC}}
\DeclareMathOperator{\id}{id}
\DeclareMathOperator{\Hom}{Hom}
\DeclareMathOperator{\coev}{coev}
\DeclareMathOperator{\ev}{ev}
\DeclareMathOperator{\Id}{Id}
\DeclareMathOperator{\ptr}{ptr}
\DeclareMathOperator{\rep}{Rep}
\DeclareMathOperator{\tr}{tr}
\DeclareMathOperator{\mt}{t}
\DeclareMathOperator{\qdim}{qdim}
\DeclareMathOperator{\vir}{Vir}
\DeclareMathOperator{\com}{Com}
\newfont{\german}{eufm10}
\begin{document}

\allowdisplaybreaks
\pagestyle{plain}

\title{Simple current extensions beyond semi-simplicity}

\author{Thomas Creutzig, Shashank Kanade and Andrew R.\ Linshaw}
\thanks{E-mail: creutzig@ualberta.ca, kanade@ualberta.ca,  andrew.linshaw@du.edu}

\begin{abstract}
  Let $V$ be a simple VOA and consider a representation category of
  $V$ that is a vertex tensor category in the sense of
  Huang-Lepowsky. In particular, this category is a braided tensor
  category. Let $J$ be an object in this category that is a simple
  current of order two of either integer or half-integer conformal
  dimension.  We prove that $V\oplus J$ is either a VOA or a super
  VOA.  If the representation category of $V$ is in addition ribbon,
  then the categorical dimension of $J$ decides this parity question.
  Combining with Carnahan's work, we extend this result to
  simple currents of arbitrary order.  
Our next result is a simple sufficient criterion for lifting indecomposable objects that only depends on conformal dimensions. 
Several examples of simple current
  extensions that are $C_2$-cofinite and non-rational are then given and induced modules listed.
 \end{abstract}

\maketitle


\section{Introduction and summary of results}

Let $V$ be a simple vertex operator algebra (VOA) a fundamental and
unfortunately difficult question is whether $V$ can be extended to a larger
VOA by some of its modules.  If the representation category $\cC$
under consideration is a vertex tensor category in the sense of
Huang-Lepowsky (see \cite{HL1}), then this question is equivalent to
the existence of a haploid algebra in the category $\cC$ due to work
by Huang-Kirillov-Lepowsky \cite{HKL} following older work by
Kirillov-Ostrik \cite{KO}. This result provides a new direction of
constructing extension of VOAs.  The nicest possible extensions are
those by simple currents, that is by invertible objects in
$\cC$. Simple currents appeared first in the context of
two-dimensional conformal field theory (CFT) in the physics literature
\cite{SY, FG, GW} and have then later been introduced also for VOAs
\cite{DLM}, especially H\"ohn was able to relate the extension problem
to the categorical context \cite{Ho}.  The famous moonshine module VOA
of \cite{FLM} is a simple current extension of a certain VOA.  It can
also be constructed using vertex tensor categories \cite{H1}. Interesting
further results on simple current extensions of VOAs are for example
\cite{Y, S, Li, FRS, LaLaY, La}.  Most importantly to us, last year
Carnahan elaborated further on the simple current extension
problem \cite{C}, and he basically solved the problem for integer
weight simple currents up to extensions by self-dual ones, i.e.,
objects in the category that are their own inverses.

\subsection{Motivation}

We are interested in VOAs beyond the semi-simple setting, i.e., VOAs
whose representation category has indecomposable but reducible
modules. We also do not necessarily restrict to categories with only
finitely many simple objects. Such VOAs are sometimes called
logarithmic as they are the mathematicians' reformulation of
logarithmic CFT.  Presently, there is one well understood type of
$C_2$-cofinite but non-rational simple VOAs, the $\cW(p)$-triplet
algebras \cite{FGST, AdM, TW, CF} and the order two simple current
extension of $\cW(2)$ called symplectic fermions \cite{AA, Ab} as well
as the super triplet \cite{AdM2}.  One can now use these to construct
new VOAs via orbifolds \cite{AdLM1, AdLM2} and orbifolds of tensor
products \cite{Ab}. Also note that both $\cW(p)$ and the symplectic
fermion super VOA are rigid \cite{TW, DR}. The main objective of this
work is to develop a theory of simple current extensions of VOAs
beyond semi-simplicity. The main questions one needs to ask are
\begin{itemize}
\item Can a given VOA $V$ be extended to a larger VOA or super VOA by simple currents?
\item Which generalized modules\footnote{Generalized modules are those 
which need not admit a semi-simple action of $L(0)$ and
are graded by generalized eigenvalues of $L(0)$. } of $V$ lift to those of the extension $V_e$?
\end{itemize}
We will use our answers to these questions to construct three new
families of $C_2$-cofinite VOAs together with all modules that lift at
the end of this work.  Our main intention is however to find genuinely
new $C_2$-cofinite VOAs, that is VOAs that are not directly related to
the well-known triplet VOA.  For example consider the
following diagram

\begin{equation*}
\xymatrix{
\mathcal A_k \ar[rr]^(0.4){ \text{extension}}\ar[d]^{\text{coset}}
&& \mathcal E_k =\bigoplus\limits_{n\in \Z} J^n \ar[d]^{\text{coset}}\\
\text{Com}\left(\mathcal H, \mathcal A_k\right) \ar[rr]^(.5){\text{extension}}&&
\text{Com}\left(\mathcal H, \mathcal E_k\right).}
\end{equation*}

Here $\mathcal A_k$ is a family of VOAs with one-dimensional associated
variety \cite{A1} containing a Heisenberg sub VOA $\mathcal H$ and having a
simple current $J$ of infinite order.  Our picture is that the
Heisenberg coset $\text{Com}\left(\mathcal H, \mathcal A_k\right)$ still
has a one-dimensional associated variety while the extension
$\mathcal E_k$ only has finitely many irreducible objects. Especially
$\text{Com}\left(\mathcal H, \mathcal E_k\right)$ is our candidate for
new $C_2$-cofinite VOAs.

A natural example is $\mathcal A_k=L_k(\gs\gl_2)$ for
$k+2\in\Q_{>0}\setminus \{1, \frac{1}{2},\frac{1}{3}, \dots \}$ as it has
one-dimensional variety of modules \cite{AdM3}. In \cite{CR1, CR2} it is
conjectured that these VOAs allow for infinite order simple current
extensions $\mathcal E_k$, which then would only have finitely many simple
modules. These VOAs would be somehow unusual as they would not be of
CFT-type, and these VOAs are not our final goal, but rather
$\text{Com}\left(\mathcal H, \mathcal E_k\right)$. In the example of
$k=-1/2$ this construction would just yield $\cW(2)$ and in the case
of $k=-4/3$ it would just give $\cW(3)$ \cite{Ad1, CRW, R2}. In all
other cases we expect new $C_2$-cofinite VOAs. Another potential
candidate is the Bershadsky-Polyakov algebra whose Heisenberg coset is
studied in \cite{ACL}. In a subsequent work, we will thus develop
general properties of Heisenberg cosets beyond semi-simplicity
\cite{CLR}. This work will rely on our findings here and will then
further be used for interesting examples.

The extension problem leads to interesting number theory if restricted
to characters. Namely it seems that those types of functions that
appear in characters of logarithmic VOAs also appear in current
research of modular forms and beyond. For example, the characters of
the singlet algebra $\cM(p)$ are sometimes composed of partial theta
functions \cite{F, CM}, while their infinite order simple current
extensions, the triplets $\cW(p)$, have as characters of modules just
ordinary theta functions and their derivatives \cite{F}. Another
example is $V_k\left(\gg\gl(1|1)\right)$, while its module characters
are built out of ordinary Jacobi theta functions it has many simple
current extensions whose module characters are sometimes Mock Jacobi
forms \cite{AC, CR4}.

In the present work, we will translate results into the corresponding
statements in a braided tensor category using the theory of \cite{KO,
  HKL}. The advantage is that the categorical picture is much better
suited for proving properties of the representation category. For us
it provides a very nice way to understand the problem of the two
questions: Does a module lift to a module of the extended VOA? Is the
extension a VOA or a super VOA?  The work \cite{KO} assumes categories
to be semi-simple and focuses on algebras with trivial twist. We
believe that many of the results of \cite{KO} can be modified to our
non semi-simple setting, the most important one being the question of
rigidity in the category of the extension. We shall look at related
generalizations in future work.

\subsection{Results}

In order to describe our first result recall that braidings are the
commutativity isomorphisms which we denote by
\[
c_{A, B} : A \boxtimes B \rightarrow B\boxtimes A
\]
for objects $A, B$ in our category.  Let $J$ be a self-dual simple
current, i.e., $J\boxtimes J \cong V$. It can only give rise to a VOA
extension of $V$ if its conformal dimension $h_J$, that is the
conformal weight of its lowest weight state, is in $\frac{1}{2}\Z$. The
twist of an object in the representation category of a VOA is given by
the action of $e^{2\pi i L(0)}$ on the object, so that on a simple
module like the simple current $J$ it just acts as
$\theta_J =e^{2\pi i h_J}\Id_J$.  The balancing axiom of braided
tensor categories applied to this case reads
\[
1=\theta_V=\theta_{J\boxtimes J}= c_{J, J}\circ c_{J, J} 
\circ \left(\theta_J \boxtimes \theta_J\right)
\]
so that $\theta_J \in \{ \pm \Id_J \}$ implies
$c_{J, J} \in \{ \pm 1 \}$.  Our first result is Theorem
\ref{thm:constructingVOSA}, which is:
\begin{thm}
Assume that $V$ is a VOA satisfying the
conditions required to invoke Huang-Lepowsky-Zhang's theory. We also
assume that braiding and twist are as given by Huang-Lepowsky.
Let $J$ be a simple current such that $J\boxtimes J \cong V$.  If
$c_{J,J}=1$, $V\oplus J$ has a structure of $\frac{1}{2}\Z$-graded
vertex operator algebra and if $c_{J,J}=-1$, $V\oplus J$ has a
structure of vertex operator superalgebra.
\end{thm}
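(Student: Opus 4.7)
The plan is to apply the Huang-Kirillov-Lepowsky correspondence of \cite{HKL}: a VOA extension of $V$ inside $\cC$ is the same datum as a haploid commutative associative algebra with trivial twist, and (in the half-integer weight case) its super analogue governs super VOA extensions. I would therefore equip $A := V \oplus J$ with an algebra structure in $\cC$ and read off which parity it satisfies.

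First I would build the multiplication $\mu : A \boxtimes A \to A$. Its components $V \boxtimes V \to V$, $V \boxtimes J \to J$, and $J \boxtimes V \to J$ are determined canonically by the $V$-module structures, and for $J \boxtimes J \to V$ I would fix once and for all an isomorphism $\iota : J \boxtimes J \xrightarrow{\sim} V$, which exists by the self-duality hypothesis. The embedding $\one = V \hookrightarrow A$ serves as the unit, and haploidness $\dim \Hom(\one,A)=1$ holds because $V$ is simple. Associativity is automatic in every case involving at least one $V$-factor. The one substantive case is comparing the two candidate maps $(J \boxtimes J) \boxtimes J \to J$ and $J \boxtimes (J \boxtimes J) \to J$: since $J^{\boxtimes 3} \cong J$ and $J$ is simple, both maps live in the one-dimensional space $\Hom(J^{\boxtimes 3}, J)$, hence are proportional. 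I would then rescale $\iota$ so that the proportionality constant is $1$; this normalization, which amounts to a pentagon computation inside a one-dimensional Hom space, is the most delicate step.

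It remains to check the (super)commutativity constraint and identify the parity. On summands involving $V$, the constraint $\mu \circ c_{A,A} = \pm\mu$ is automatic by naturality. On $J \boxtimes J$ it reads $\iota \circ c_{J,J} = \iota$; using the fact from the theorem's preamble that $c_{J,J} \in \{\pm \Id\}$, this selects $c_{J,J} = +1$ for an honest commutative algebra, and, once the super-braiding (which differs from $c$ by a sign on the odd-odd piece) is used in place of $c$, $c_{J,J} = -1$ for a supercommutative algebra. In parallel, the twist $\theta_A$ equals $\Id_A$ when $h_J \in \Z$ and has eigenvalues $(+1,-1)$ on $(V,J)$ when $h_J \in \tfrac{1}{2}+\Z$, matching respectively the trivial-twist hypothesis of \cite{HKL} and of its parity-graded super analogue. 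The main obstacle beyond the associativity normalization will be making sure that the HKL reconstruction goes through in the logarithmic, non-semisimple tensor category of \cite{HL1}; this should reduce to verifying that the intertwining-operator constructions used in \cite{HKL} do not implicitly require semisimplicity.
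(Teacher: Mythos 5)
Your overall route---equipping $A=V\oplus J$ with a haploid $\cC$-(super)algebra structure and invoking the Huang--Kirillov--Lepowsky correspondence---is a genuinely different strategy from the paper's, which instead constructs the vertex operator $Y_e$ directly on $V\oplus J$ following Huang's proof of Theorem 4.1 of \cite{H6}; the super-HKL equivalence you would need is exactly Theorem \ref{thm:HKLanalog} of the paper, whose proof is independent of the present theorem, so the route is legitimate in principle. However, your handling of the one substantive associativity check contains a genuine gap. The two maps you must compare on $J\boxtimes(J\boxtimes J)$ are $\mu|_{J\boxtimes V}\circ(\Id_J\boxtimes\iota)$ and $\mu|_{V\boxtimes J}\circ(\iota\boxtimes\Id_J)\circ\cA_{J,J,J}$, and \emph{both} are linear in the chosen isomorphism $\iota\colon J\boxtimes J\to V$. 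Consequently the proportionality constant $F(1,1,1)$ relating them inside the one-dimensional space $\Hom(J\boxtimes(J\boxtimes J),J)$ is invariant under any rescaling $\iota\mapsto\lambda\iota$, and your proposed normalization cannot force it to equal $1$. (The paper's remark following the theorem makes the same point: rescaling $j$ only changes the resulting structure by an isomorphism.) This constant is a genuine obstruction: for a simple current with $\theta_J^2\neq\Id$, e.g.\ $h_J\in\tfrac14+\tfrac12\Z$, one gets $F(1,1,1)=-1$ and no associative algebra structure on $V\oplus J$ exists at all, so no argument ignoring the braiding can close this step.

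The missing idea is that associativity must be tied to the skew-symmetry/braiding data. The paper records the associativity and skew-symmetry constants as a normalized abelian $3$-cocycle $(F,\Omega)$ on $\Z_2$ (via Lemmas 2.1.5, 2.1.7 and 2.2.7 of \cite{C}); specializing the hexagon identity \eqref{eqn:hexagon2} at $i=j=k=1$ yields $F(1,1,1)=\Omega(1,1)^2$, and the explicit computation of the braiding through intertwining operators gives $\Omega(1,1)=c_{J,J}^{-1}=\pm1$, whence $F(1,1,1)=1$. Only at that point is $(V\oplus J,\mu,\iota)$ known to be associative. The remainder of your outline---the parity of the commutativity constraint, the twist condition $\theta_A=\Id$ versus $\theta_A^2=\Id$, haploidness from simplicity of $V$, and the caveat that the HKL reconstruction must be checked in the non-semisimple setting (Theorem \ref{thm:HKLanalog} does exactly this)---is sound. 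To repair the proof, replace the rescaling step by the hexagon argument, after which either your categorical packaging or the paper's direct construction completes the theorem.
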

We remark that simple current extensions by self-dual simple currents
generated by a weight one primary vector were understood in the
rational, $C_2$-cofinite and CFT-type setting 20 years ago
\cite{DLM, Li}. Also the extensions of the unitary rational Virasoro
VOAs are known \cite{LaLaY}.

The proof of this theorem is very similar to the proof of Theorem 
4.1 of \cite{H6}.
For this theorem, we need a vertex tensor category of $V$ in the sense
of \cite{HL1}. The current state of the art is that the
representation category of $C_2$-cofinite VOAs with natural
additional requirements as well as subcategories of VOAs such that all
modules in this subcategory are $C_1$-cofinite  with a few
more natural additional properties are vertex tensor
categories. Especially, braiding and twist are given as developed
by Huang-Lepowsky.  For the precise requirements, see Theorems
\ref{thm:cond1} and \ref{thm:cond2} which are due to \cite{H3, HLZ,
  Miy}. For the background on the vertex tensor categories, we refer
the reader to \cite{HL1}. The construction of vertex tensor category
structure in the non semi-simple case is accomplished in \cite{HLZ} and
for a quick perspective on \cite{HLZ} and the related results, see
\cite{HL2}.

In order to understand whether the extension is a super VOA or just a
VOA, one needs to determine the braiding $c_{J, J}$. This quantity is
often not directly accessible, however there is a useful spin
statistics theorem (we adapt the name from \cite{GL} for a similar
theorem, but in the unitary conformal net setting). This spin
statistics theorem needs the notion of a trace, so it only holds in
rigid braided tensor categories, actually only in ribbon
categories. 
\begin{thm}
Assume that the tensor category of $V$ is ribbon, then,
for a self-dual simple current $J$ with conformal dimension
$h_J$,
\begin{align}
c_{J,J}\qdim(J)=e^{2\pi i h_J},
\end{align}
where $\qdim(J)=\tr_J(\Id_J)$.
\end{thm}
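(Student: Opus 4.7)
The plan is to combine the fact that $c_{J,J}$ is forced to be a scalar (by simplicity of $J \boxtimes J \cong V$) with the standard ribbon-category identity that expresses the twist as a partial categorical trace of the self-braiding.

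First, since $V$ is simple and $J \boxtimes J \cong V$, Schur's lemma gives $\mathrm{End}(J \boxtimes J) \cong \C \cdot \Id$, so $c_{J,J}$ is multiplication by a scalar which (abusing notation) I will still denote $c_{J,J} \in \C$. The balancing axiom recalled just before the statement already forces $c_{J,J}^2 = e^{-4\pi i h_J}$, but pinning down the sign requires an extra input, namely the ribbon (in particular, the pivotal) structure.

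Second, I would invoke the standard ribbon-category identity
$$(\Id_A \otimes \tr_A)(c_{A,A}) = \theta_A \qquad \text{in } \mathrm{End}(A),$$
where $\tr_A \colon \mathrm{End}(A \boxtimes A) \to \mathrm{End}(A)$ denotes the partial categorical trace over the second tensor factor. Diagrammatically, this is the assertion that closing off one strand of a positive self-crossing produces a single strand with one full twist, and it follows directly from the ribbon axioms via the graphical calculus (cf.\ Bakalov-Kirillov or Turaev).

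Third, applying this identity with $A = J$, and using both that $c_{J,J}$ is scalar and that $(\Id_J \otimes \tr_J)(\Id_{J \boxtimes J}) = \qdim(J) \cdot \Id_J$, I obtain
$$c_{J,J} \cdot \qdim(J) \cdot \Id_J = \theta_J = e^{2\pi i h_J} \Id_J,$$
the last equality being the action of the twist on a simple module of conformal dimension $h_J$. Equating scalars yields the stated formula. The only real subtlety is citing the ribbon identity with sign conventions compatible with Huang-Lepowsky's braiding and twist on the VOA side; once those are fixed, the argument is essentially a one-line computation.
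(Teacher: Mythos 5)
Your proof is correct and follows essentially the same route as the paper: the paper's key lemma is the mirror/inverse form $\ptr_J^L(c_{J,J}^{-1})=\theta_J^{-1}$ of the partial-trace identity you cite, which it proves from the definitions of $u_X$, $\psi_X$, $\theta_X$ and the hexagon axiom rather than quoting it, and then specializes to the scalar $c_{J,J}$ exactly as you do. The only substantive difference is that the paper derives that identity explicitly in its conventions (so the sign issue you flag as the ``only real subtlety'' is actually pinned down there), whereas you take it as a black box from Turaev/Bakalov--Kirillov.
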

This is a small reformulation of Corollary
\ref{cor:spinstatistics}. 
The quantity $\qdim(J)$ is the categorical or quantum
dimension of $J$. 
In a modular tensor category, that is in the tensor
category of a regular VOA, the quantum dimension is determined from
the modular $S$-matrix coefficients as
\[
\qdim(J) =\frac{S_{J, V}}{S_{V, V}}
\]
which coincides with 
\begin{align}
\lim_{t\rightarrow 0} \frac{\text{ch}[J](\tau)}{\text{ch}[V](\tau)}
\label{eqn:Ch}
\end{align}
if the module of lowest conformal weight is $V$ itself (cf.\ \cite{DJX}),
for instance, in the case of unitary VOAs. The quantity \eqref{eqn:Ch}
is clearly non-negative and hence in this case $\qdim(J)=1$.  For
modularity of the categories of modules for vertex operator algebras
satisfying suitable finiteness and reductivity conditions, see
\cite{H3,H4}.  This means Carnahan's evenness conjecture \cite{C} is
correct for unitary regular VOAs. But beyond that there are
counterexamples: Rational $C_2$-cofinite counterexamples are our
Theorem \ref{thm:osp} as well as Theorem 10.3 of \cite{ACL}.  The
symplectic fermion super VOA is a $C_2$-cofinite but non-rational
simple current extension of the triplet VOA $\cW(2)$ graded by the
integers and hence in this case the simple current must have quantum
dimension minus one. See \cite{AA} but also \cite{CG} on symplectic
fermions. Interestingly, there are strong indications that quantum
dimensions are still determined by the modular properties of
characters in the $C_2$-cofinite setting \cite{CG} and even beyond
that \cite{CM, CMW}.

Having solved the extension problem for self-dual simple currents, we
can combine our findings with those of Carnahan \cite{C} to get
Theorem \ref{thm:generalorderJ}.
\begin{thm}
Assume that $V$ is a VOA satisfying the conditions required to invoke
Huang-Lepowsky-Zhang's theory. We also assume that braiding and twist are
as given by Huang-Lepowsky.  Let $J$ be a simple current.  Assume
that $\theta_{J^k}=\pm 1$
with $\theta_{J^{k+2}}$ having the same sign as
$\theta_{J^k}$
for all $k\in\Z$.
Then
\[
V_e= \bigoplus_{j\in G}J^j
\]
 has a natural structure of a
strongly graded vertex operator superalgebra, graded by the abelian
group $G$ generated by $J$. (For the definition of strongly graded,
we refer the reader to \cite{HLZ}.) If $G$ is finite, we get a
vertex operator superalgebra.
\end{thm}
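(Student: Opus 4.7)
The plan is to build the extension in two stages: first the even part via Carnahan's theorem, then the odd part via our self-dual result. Let $G_0 \le G$ denote the subgroup generated by $J^2$. The hypothesis, together with $\theta_V = 1$, forces $\theta_{J^{2k}} = 1$ for every $k \in \Z$, so each $J^{2k}$ has integer conformal weight. Carnahan's theorem \cite{C} therefore endows
\[
V_0 \;:=\; \bigoplus_{j \in G_0} J^j
\]
with the structure of a strongly $G_0$-graded vertex operator algebra extending $V$ (an ordinary VOA when $G_0$ is finite). If $G_0 = G$, which happens precisely when $J$ has odd order, the conclusion of the theorem is already established.

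Assume therefore that $[G:G_0] = 2$, and set
\[
V_1 \;:=\; \bigoplus_{j \in G \setminus G_0} J^j,
\]
viewed as a $V_0$-module. To invoke Theorem \ref{thm:constructingVOSA} with $V_0$ in place of $V$, three verifications are needed. First, $V_1$ is simple over $V_0$: any nonzero $V_0$-submodule is in particular a $V$-submodule, hence contains an entire isotypic piece $J^{2k+1}$ by simplicity of $J^{2k+1}$ over $V$, and acting on it by $V_0$ through the nonzero intertwiners implementing $J^{2\ell} \boxtimes_V J^{2k+1} \cong J^{2k+1+2\ell}$ then recovers every remaining summand. Second, $V_1$ is self-dual as a simple current over $V_0$, meaning $V_1 \boxtimes_{V_0} V_1 \cong V_0$; this follows because any fusion $J^{2k+1} \boxtimes_V J^{2\ell+1}$ lands in $G_0$, and the induction functor of \cite{KO, HKL} propagates the $V$-level fusion rules to the $V_0$-level tensor structure. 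Third, the lowest conformal weight of $V_1$ lies in $\tfrac12\Z$, since $\theta_{J^{2k+1}} \in \{\pm 1\}$ forces $h_{J^{2k+1}} \in \tfrac12\Z$.

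With these verifications in hand, Theorem \ref{thm:constructingVOSA}, interpreted in the strongly graded setting of \cite{HLZ}, produces on $V_e = V_0 \oplus V_1$ the structure of a vertex operator algebra or vertex operator superalgebra according as the self-braiding $c_{V_1, V_1}$ equals $+1$ or $-1$; the $G$-grading on $V_e$ is the unique one extending that on $V_0$ and placing $V_1$ in the nontrivial $G_0$-coset. I expect the main obstacle to be the self-duality assertion $V_1 \boxtimes_{V_0} V_1 \cong V_0$: although intuitively forced by the $G$-graded fusion rules of $V$, a rigorous argument requires transporting the braided tensor structure on $V$-modules to one on $V_0$-modules via Kirillov-Ostrik induction, and matching the induced tensor product with the group-theoretic composition in $G$. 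Once this categorical inheritance is secured, the remainder of the argument is bookkeeping with the $G$-grading and the strongly graded framework.
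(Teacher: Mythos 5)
Your two-stage plan ($V \subset V_0 \subset V_e$ with $V_0$ the even part) is a natural idea, but it is not the paper's route and it has a genuine gap at its pivot point: to ``invoke Theorem \ref{thm:constructingVOSA} with $V_0$ in place of $V$'' you need $V_0$ itself to satisfy the hypotheses of that theorem, i.e.\ that Huang--Lepowsky--Zhang's theory applies to $V_0$, that its (generalized) module category carries a vertex tensor category structure with the HLZ braiding and twist, and that $V_1$ is a self-dual simple current there. None of this is available: when $G$ is infinite, $V_0$ is only a strongly graded VOA, to which the HLZ machinery as used in Theorem \ref{thm:constructingVOSA} does not directly apply; and even for finite $G$, establishing a braided tensor structure on $\rep^0 V_0$ compatible with induction is precisely the kind of Kirillov--Ostrik-style result that the paper explicitly defers to future work in the non-semisimple setting. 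You correctly flag $V_1 \boxtimes_{V_0} V_1 \cong V_0$ as the main obstacle, but that concession is the whole ballgame --- without the categorical inheritance the second stage cannot start. There is also a smaller unaddressed point in stage one: Carnahan's results handle integer-weight simple currents only ``up to extensions by self-dual ones,'' so if $G_0$ contains a self-dual element $J^m$ (e.g.\ $J$ of order $4k$), having $\theta_{J^m}=1$ does not by itself rule out the wrong-statistics case $c_{J^m,J^m}=-1$; one must first show the self-braiding of even powers is automatically trivial.

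The paper avoids all of this by never leaving the category of $V$-modules. It chooses nonzero intertwining operators $\tilde{\cY}_{i,j}$ of type ${J^{i+j} \choose J^i\, J^j}$ (logarithm-free and with integer powers by the twist hypotheses), obtains a normalized abelian $3$-cocycle $(\tilde F,\tilde\Omega)$ on $G$ in Carnahan's sense, and uses the quadratic-form property of $\tilde\Omega(i,i)=c_{J^i,J^i}^{-1}=\pm 1$ to get $\tilde\Omega(2i,2i)=1$ --- this is exactly what resolves the evenness subtlety above --- and to conclude (Lemmas 1.2.7, 1.2.8 of \cite{C}) that the cocycle is cohomologous to one pulled back from $G/2G\cong \Z_2$. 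After rescaling the intertwining operators, Proposition 2.4.5 of \cite{C} yields the even part and its module $\tilde J$ directly, and the residual $\Z_2$ computation is the identity $\bar F(1,1,1)=\bar\Omega(1,1)^2=1$ already established in the proof of Theorem \ref{thm:constructingVOSA}. In other words, the paper reuses the \emph{cocycle computation} from the self-dual case rather than the \emph{theorem statement}, which is what lets it sidestep the need for a vertex tensor category of $V_0$-modules. To salvage your approach you would either have to prove the categorical inheritance for $V_0$ (a substantial independent result) or reformulate your second stage at the level of $V$-module intertwining operators, at which point you have essentially reproduced the paper's argument.
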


Next, we would like to elaborate on the representation category of
simple current extensions. The works \cite{KO} and \cite{HKL} together
bring us into a good position here, since there is also a categorical
notation of VOA extension as a haploid algebra in the category as well
as many nice results on the representation category of local modules
of the haploid algebra. We only need to adapt the main theorem and its
proof of \cite{HKL} to extensions that are super VOAs, which leads us
to the notion of superalgebras in the category. Then Theorem
\ref{thm:HKLanalog} says that extensions that are super VOAs are
equivalent to super algebras in the category, and Theorem
\ref{thm:modules} tells us that the module category of the extended
VOA is equivalent to the category of local modules of the
corresponding superalgebra in the category.  This is very useful, as
we can use \cite{KO} to define a functor $\cF$ (the
induction functor) from the category to the representation category of
its superalgebra.  
For this, let $\mathcal C'\subset \mathcal C$ be the full subcategory of
generalized $V$-modules, such that objects of $\cC'$ are
subquotients of objects of the tensor ring generated by the
simple objects of $\mathcal C$.  Theorems
\ref{cor:liftingprojective} and \ref{cor:inforderJlifting} are
the following.
\begin{thm}
Let $J$ be simple current such that the extension $V_e$ exists
and let $P$ be an indecomposable generalized $V$-module, then:
\begin{enumerate}
\item If $P$ is an object of $\mathcal C'$ then $\cF(P)$ is a
generalized $V_e$-module iff $h_{J\boxtimes P}-h_J-h_P\in\Z$.
\item If $J$ is of finite order and if $P$ is an object of $\mathcal C$ such that both
$\dim(\Hom(P,P))<\infty$ and
$\dim(\Hom(J\boxtimes P, J\boxtimes P))<\infty$.  Assume also that
$L(0)$ has Jordan blocks of bounded size on both $P$ and
$J\boxtimes P$.  Then, $\cF(P)$ is a generalized $V_e$-module iff
$h_{J\boxtimes P}-h_{J}-h_{P}\in\Z$.
\end{enumerate}
\end{thm}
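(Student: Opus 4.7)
The plan is to convert the lifting question into a monodromy triviality condition in $\cC$, split that condition via the balancing axiom into a scalar part (which encodes the conformal-weight condition) and a unipotent part (which I must show vanishes automatically under the given hypotheses). By Theorem \ref{thm:modules}, $\cF(P)=V_e\boxtimes P$ underlies a generalized $V_e$-module if and only if it is local as a $V_e$-module in $\cC$, and by \cite{KO} this is equivalent to triviality of the monodromy $c_{P,V_e}\circ c_{V_e,P}$ on $V_e\boxtimes P$. Decomposing $V_e=\bigoplus_j J^j$, this reduces to checking $M_j:=c_{P,J^j}\circ c_{J^j,P}=\Id_{J^j\boxtimes P}$ for every $j$ in the group generated by $J$. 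Since $J$ is invertible, $J\boxtimes-$ is a braided auto-equivalence, and since $V_e$ is already an algebra we have $M_{J^i,J^j}=\Id$; a routine cocycle argument then reduces the task to proving $M_1=\Id_{J\boxtimes P}$.

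Applying the balancing axiom to $J\boxtimes P$ together with $\theta_J=e^{2\pi ih_J}\Id_J$ gives
\[
M_1=e^{-2\pi ih_J}\,\theta_{J\boxtimes P}\circ(\Id_J\boxtimes\theta_P)^{-1}.
\]
For $P$ indecomposable with a single generalized $L(0)$-eigenvalue $h_P$, I decompose $\theta_P=e^{2\pi ih_P}U_P$ and $\theta_{J\boxtimes P}=e^{2\pi ih_{J\boxtimes P}}U_{J\boxtimes P}$ with $U_P,U_{J\boxtimes P}$ unipotent, so that
\[
M_1=e^{2\pi i(h_{J\boxtimes P}-h_J-h_P)}\,U_{J\boxtimes P}\circ(\Id_J\boxtimes U_P)^{-1}.
\]
Triviality of $M_1$ then forces the scalar to be $1$ (giving the ``only if'' direction of the weight condition) and the unipotent factor to be $\Id$. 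In particular this instantly yields the necessity half of the theorem.

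The crux, and the main obstacle, is the ``if'' direction: under the assumption $h_{J\boxtimes P}-h_J-h_P\in\Z$, I must verify that the identity $U_{J\boxtimes P}=\Id_J\boxtimes U_P$ holds automatically. For (1), since $P\in\cC'$ is a subquotient of a tensor product of simples, it carries a finite filtration whose associated graded consists of simples on each of which the twist is a scalar; the unipotent discrepancy therefore vanishes on the associated graded, and naturality of braiding, twist, and the auto-equivalence $J\boxtimes-$ propagates this vanishing back to $P$ itself. For (2), the bounded-Jordan-block hypothesis together with $\dim\Hom(P,P),\dim\Hom(J\boxtimes P,J\boxtimes P)<\infty$ make $U_P$ and $U_{J\boxtimes P}$ genuinely nilpotent of bounded order, and centrality of the scalar $\theta_J$ implies that the Huang-Lepowsky-Zhang $L(0)$-action on $J\boxtimes P$ coincides with the one transported by $J\boxtimes-$ from $P$, so the two unipotents agree. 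Once $M_1=\Id$ is established, the multiplicative compatibility from the first paragraph gives $M_j=\Id$ for all $j$.

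Finally I verify that $\cF(P)$ really is a \emph{generalized} $V_e$-module in the strong-grading sense of \cite{HLZ}: under (2) this is immediate because $J$ has finite order, so $\cF(P)=\bigoplus_{j=0}^{n-1}J^j\boxtimes P$ is a finite direct sum with all the required grading and finiteness properties inherited from those of $P$ and $J\boxtimes P$; under (1) it follows from the tensor-ring description of $\cC'$, which ensures that each graded piece of $\cF(P)$ sits inside the tensor ring and inherits the needed local-grading axioms.
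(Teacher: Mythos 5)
Your reduction of the lifting question to the single condition $M_{J,P}=\Id_{J\boxtimes P}$, and your splitting of that monodromy into a scalar $e^{2\pi i(h_{J\boxtimes P}-h_J-h_P)}$ times a unipotent factor via the balancing axiom, is exactly the paper's strategy (Theorem \ref{thm:liftsiff} and Lemma \ref{lem:MThetaIndecomp}), and your ``only if'' direction is sound: a unipotent operator that equals a scalar must be the identity. The problem is that in both cases the crucial step --- showing the unipotent factor is trivial --- is not actually proved.

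For part (1), the argument ``the unipotent discrepancy vanishes on the associated graded, and naturality propagates this vanishing back to $P$'' does not work. \emph{Every} unipotent endomorphism acts as the identity on the associated graded of a suitable filtration; that is what unipotent means. So vanishing on the associated graded carries no information, and naturality of the braiding and twist cannot upgrade it: take $P$ a non-split self-extension of a simple module and $1+\nu$ with $\nu\neq 0$ nilpotent for a counterexample to the inference. What the paper actually does (Lemma \ref{lem:infordermonodromy}) is prove that $M_{J,Q}$ is an honest \emph{scalar} on any finite tensor product $Q=\boxtimes_i A_i$ of simples, by a hexagon computation showing that monodromies multiply across $\boxtimes$, and then that this scalarity descends to subquotients using exactness of $J\boxtimes\bullet$ and naturality of $M$. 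Comparing a scalar operator with the semisimple-plus-nilpotent decomposition of $M_{J,P}$ coming from the twists then forces the nilpotent part to vanish. This hexagon-based multiplicativity is the missing ingredient; the filtration argument is not a substitute for it.

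For part (2), the assertion that ``centrality of the scalar $\theta_J$ implies the $L(0)$-action on $J\boxtimes P$ coincides with the one transported from $P$, so the two unipotents agree'' is precisely the statement to be proved, and it is not automatic. The paper's mechanism (Lemma \ref{lem:JPmon}) is: write $M_{J,P}=\lambda\Id_{J\boxtimes P}+\Id_J\boxtimes\nu$ with $\nu$ nilpotent (using $\Hom(P,P)\cong\Hom(J\boxtimes P,J\boxtimes P)$ from Lemma \ref{lem:dimhomsame}), establish by induction the binomial identity
\begin{align*}
M_{J^n,P}=\sum_{i=0}^{n}\binom{n}{i}\lambda^i\,\Id_{J^n}\boxtimes\nu^{\,n-i},
\end{align*}
and then set $n=N$ with $J^N\cong\unit$, so that $M_{J^N,P}=\Id_P$ forces $\nu=0$ (and $\lambda^N=1$). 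The finite order of $J$ is used exactly at this point --- it is what kills the nilpotent part --- whereas in your write-up it is only invoked at the very end for the grading of $\cF(P)$, where it is not the issue. Without this (or the tensor-product multiplicativity of part (1)), the sufficiency direction of the theorem is not established.
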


In other words, answering whether an indecomposable 
module $P$ of the VOA $V$ lifts to a generalized module of the simple current
extension amounts to the computation of a few conformal dimensions.

The corresonding theorem for simple modules of $C_2$-cofinite, rational, CFT-type VOAs
has been proven in \cite{Y, La}.

In practice, it is expected that $\mathcal C'$ is the category of
``most interesting $V$-modules.'' For example, in the case of $\cW(p)$ it
contains the category whose indecomposable objects consist of all simple and
all projective modules of $\cW(p)$-mod \cite{NT, TW}. Analogous
statement is true for modules of $V_k(\gg\gl(1|1))$ \cite{CR4} and we
expect this to be a generic feature of ``nice'' logarithmic VOAs.  In
the case of the Heisenberg VOA, $\mathcal C'$ is the category of
semi-simple modules; in this case the interesting infinite order
simple current extensions are lattice VOAs (of positive definite
lattices) and thus they are rational and $C_2$-cofinite. The
Heisenberg VOA has indecomposable but reducible objects, but they are
not objects in $\mathcal C'$ and they do not lift to modules of the
lattice VOA.

Carnahan titled his work \cite{C} ``Building vertex algebras from
parts,'' and indeed in the last section we construct various new
logarithmic VOAs as simple current extensions of tensor products of
known VOAs. Our main examples are three series of $C_2$-cofinite but
non-rational VOAs constructed from the tensor product of $\cW(p)$ with
a suitable second VOA. We also list interesting modules, both simple and
indecomposable, that lift to modules of the extension.  Further examples
of resulting VOAs are the small $N=4$ super Virasoro algebra at
central charge $c=-3$ as well as super VOAs associated to
$\go\gs\gp(1|2)$. A non-logarithmic example is then
$L_1(\go\gs\gp(1|2))$, which is rational (Theorem \ref{thm:osp}) and
has only two inequivalent simple modules (Corollary
\ref{cor:ospmodules}).  Finally, our results are used in proving that
the coset vertex algebras of the rational Bershadsky-Polyakov algebra
\cite{A} with its Heisenberg subalgebra are rational W-algebras of
type $A$ \cite{ACL}.

We organize this paper as follows.  In Section 2, we start with some
crucial results in braided tensor categories. Especially we derive the
``spin statistics theorem,'' as well as results that eventually allow
us to deduce the criteria for lifting modules to modules of the simple
current extension.  Section 3 is the heart of this work and contains
all the main theorems. We conclude with some examples in Section 4,
focusing on VOAs with non semi-simple representation categories.

\noindent{\bf A remark on notation}
In Section 2, when we present several general results for braided
tensor categories, we shall denote the tensor products by
$\otimes$. In Section 3, we work with vertex tensor categories, where
we use the $P(z)$-tensor products denoted by $\boxtimes_{P(z)}$ as in
\cite{HLZ}. For a fixed value of $z$, taken to be $z=1$ for
convenience, we get a braided tensor category structure and we shall
abbreviate $\boxtimes_{P(1)}$ by $\boxtimes$. In Section 4, we use
$\otimes$ yet again to denote tensor products of vertex operator
algebras (see \cite{FHL}) and their modules.  We hope that no
confusion shall arise with various ``tensor products'' used in the
paper.

\noindent {\bf Acknowledgements} 
Most of all, we all would like to thank Tomoyuki Arakawa for
discussing the simple current problem with us in the context of the
Bershadsky-Polyakov algebra \cite{ACL}.  TC is also very grateful to
Terry Gannon for constant discussions on braided tensor categories
over the last years. We are deeply indebted to Yi-Zhi Huang and James
Lepowsky for illuminating discussions regarding vertex tensor
categories.  TC is supported by an NSERC grant, the project ID is
RES0020460. SK is supported by PIMS post-doctoral fellowship.
AL is supported by the grant $\#$318755 from the Simons Foundation.

\section{Braided tensor categories}

In this section, we shall derive a spin statistics theorem for objects
in a ribbon category. We will also study a few properties of the
monodromy matrix and we will discuss the notion of a superalgebra
inside a category. Our main sources of inspiration are \cite{KO, DGNO}.

Our notation for the braiding, associativity isomorphisms, the evaluation map
and the coevaluation are
\begin{align*}
c_{A, B}&: A\otimes B \rightarrow B\otimes A, \qquad\qquad\
\cA_{X,Y,Z}: X\otimes(Y\otimes Z) \rightarrow (X\otimes Y)\otimes Z\\
\ev_X &: X^*\otimes X \rightarrow \unit,\qquad\qquad\qquad
\coev_X : \unit \rightarrow X \otimes X^*.
\end{align*}

\subsection{Spin Statistics}

Let $\cC$ be a ribbon category, that is a rigid braided tensor category with pivotal structure 
$\psi : X \rightarrow X^{**}$.
Using \cite{DGNO}, we define:
\begin{defn}
For $f\in\text{ Hom}(X\otimes Y, X\otimes Z)$ and
$g\in\text{ Hom}(Y\otimes X, Z\otimes X)$,
let
\begin{align*}
{\ptr}_X^L(f)&=
({\ev_X}\otimes\Id_Z)\circ
\cA_{X^*,X,Z}\circ
(\Id_{X^*}\otimes f)\circ
(\Id_{X^*}\otimes (\psi^{-1}_X\otimes \Id_Y))\circ
\cA^{-1}_{X^*,X^{**},Y}\circ
(\coev_{X^*}\otimes \Id_Y)
\\
{\ptr}_X^R(g)&=
(\Id_Z\otimes {\ev_{X^*}})\circ
\cA^{-1}_{Z,X^{**},X^*}\circ
((\Id_Z\otimes \psi_X)\otimes  \Id_{X^*})\circ
(g\otimes \Id_{X^*})\circ
\cA_{Y,X,X^*}\circ
(\Id_Y\otimes \coev_{X}).
\end{align*}
\end{defn}
If we take $Y=Z=\unit$ then we recover the ordinary left and
right trace of $X$. These coincide in spherical categories like a
ribbon category.

\begin{defn}
The braided structure yields a natural morphism $u_X: X\rightarrow X^{**}$ given by
\begin{align*}
X&\xrightarrow{{\Id}_X\otimes\coev_{X^*}}X\otimes(X^*\otimes X^{**})
\xrightarrow{\cA_{X,X^*,X^{**}}}(X\otimes X^*)\otimes X^{**}\\
&\xrightarrow{c_{X,X^*}\otimes\Id_{X^{**}}}(X^*\otimes X)\otimes X^{**}
\xrightarrow{\ev_X\otimes\Id_{X^{**}}}X^{**}.
\end{align*}
\end{defn}
It is actually well-known that $u_X$ is an isomorphism. 
\begin{defn}
The twist $\theta_X$ is defined by $\psi_X=u_X\theta_X$, or equivalently $\theta_X^{-1}=\psi_X^{-1}u_X$.
\end{defn}

\begin{thm}
In a ribbon category
\begin{align}
\ptr_X^L(c_{X,X}^{-1})=\theta_X^{-1}
\label{eqn:ptrLc}
\end{align}
\end{thm}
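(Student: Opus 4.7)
The identity is the standard ``closing up an under-crossing on the left gives the inverse twist'' relation in a ribbon category, so the plan is to verify it directly from the definitions just given. First, I would unfold the right-hand side using $\theta_X^{-1}=\psi_X^{-1}\circ u_X$ together with the definition of $u_X$:
\[
\theta_X^{-1}=\psi_X^{-1}\circ(\ev_X\otimes\Id_{X^{**}})\circ(c_{X,X^*}\otimes\Id_{X^{**}})\circ\cA_{X,X^*,X^{**}}\circ(\Id_X\otimes\coev_{X^*}).
\]
Second, I would unfold the left-hand side by substituting $f=c_{X,X}^{-1}$ and $Y=Z=X$ into the definition of $\ptr_X^L$, obtaining an analogous explicit composition built from $\coev_{X^*}\otimes\Id_X$, $\cA^{-1}_{X^*,X^{**},X}$, $\Id_{X^*}\otimes(\psi_X^{-1}\otimes\Id_X)$, $\Id_{X^*}\otimes c_{X,X}^{-1}$, $\cA_{X^*,X,X}$, and $\ev_X\otimes\Id_X$.

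The core step is to show these two compositions agree as morphisms $X\to X$. Graphically this is transparent: each depicts a single strand of $X$ with one negative full twist, the $\theta_X^{-1}$ side using the over-crossing $c_{X,X^*}$ with a dual strand, and the $\ptr_X^L$ side using the under-crossing $c_{X,X}^{-1}$ with a second copy of $X$ that is introduced via $\coev_{X^*}$ and then straightened through the pivotal $\psi_X^{-1}\colon X^{**}\to X$. Algebraically, one can pass between them by two moves: (i) naturality of $c$ along $\psi_X^{-1}$, which trades $c_{X,X}^{-1}$ for $c_{X,X^{**}}^{-1}$, composed appropriately with $\Id_X\otimes\psi_X^{-1}$; and (ii) the snake/rigidity identity for the $(\coev_{X^*},\ev_X)$ pair, which rewrites $c_{X,X^{**}}^{-1}$ precomposed with $\coev_{X^*}$ on the right as $c_{X,X^*}$ postcomposed with $\ev_X$ on the left (up to associators). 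Combining these two moves converts the $\ptr_X^L$-side into the $u_X$-side, whereupon prepending $\psi_X^{-1}$ yields $\theta_X^{-1}$.

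The main obstacle is the combinatorial bookkeeping of the associators and of the placement of $\psi_X^{\pm 1}$: one has to invoke the pentagon, hexagon, naturality of $c$ and of $\psi$, and the snake identity in the correct order so that all domains and codomains line up. Mac Lane coherence guarantees that there is no essential ambiguity in the associators, so once the two key moves above are made, the remaining equalities are formal. No axioms beyond those available in a ribbon category are required, and an alternative, quicker route is simply to invoke the established string-diagram calculus for ribbon categories, in which both sides are manifestly the negative full twist on the strand $X$.
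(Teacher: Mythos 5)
Your proposal is correct in outline and takes essentially the same route as the paper, which adapts Proposition 2.32 of \cite{DGNO}: unfold $\theta_X^{-1}=\psi_X^{-1}\circ u_X$ and $\ptr_X^L(c_{X,X}^{-1})$, use naturality of the braiding in the $\psi_X^{-1}$ slot to trade $c_{X,X}^{-1}$ for $c_{X,X^{**}}^{-1}$ (your move (i), which is the paper's second displayed computation), and then match the resulting composite with $u_X$. The one thing to fix is the justification of your move (ii). The identity you actually need --- that $\Id_{X^*}\otimes c_{X,X^{**}}^{-1}$ precomposed (through associators) with $\coev_{X^*}\otimes\Id_X$ agrees with $c_{X,X^*}\otimes\Id_{X^{**}}$ precomposed (through associators) with $\Id_X\otimes\coev_{X^*}$ --- is not a snake/rigidity identity, and $(\coev_{X^*},\ev_X)$ is not a duality pair; $\ev_X$ plays no role in that step. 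It follows instead from naturality of the braiding applied to the morphism $\coev_{X^*}\colon\unit\to X^*\otimes X^{**}$ together with $c_{X,\unit}=\Id_X$ (which slides $X$ past the coevaluation for free), followed by one hexagon axiom (which decomposes $c_{X,X^*\otimes X^{**}}$ and thereby exchanges the over-crossing with $X^*$ for the under-crossing with $X^{**}$); this is exactly the content of the paper's commutative diagram. Since you do list the hexagon and naturality of $c$ among the tools to be deployed, this is a misattribution rather than a fatal gap, but as literally stated the snake identity would not produce the required equality. Your closing observation that both sides are the negative full twist in the graphical calculus is the standard shortcut and is precisely what the paper's diagram chase encodes algebraically.
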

\begin{proof}
We follow the proof of Proposition 2.32 in \cite{DGNO}, except that we incorporate the 
associativity morphisms.

The left part of the following diagram commutes by naturality of
braiding and the fact that $c_{X,\unit}=\text{Id}_X$.
The right part commutes by hexagon identity.
\begin{equation*}
\xymatrix{
X = X\otimes \unit \ar[rr]^(0.5){\id_X\otimes \coev_{X^*}}\ar[d]^{\id_X=c_{X,\unit}}
&&X\otimes (X^*\otimes X^{**})\ar[d]^{c_{X,X^*\otimes X^{**}}} 
\ar[rrrr]^{c_{X,X^*}\otimes \Id_{X^{**}}\,\circ\, \cA_{X,X^*,X^{**}}}
&&
&& 
(X^*\otimes X)\otimes X^{**}
\\
X = \unit\otimes X \ar[rr]^(.5){\coev_{X^*}\otimes \id_X}&&
(X^*\otimes X^{**})\otimes X.
\ar[rrrru]_{\qquad\qquad\qquad \cA_{X^*,X,X^{**} } \, \circ\, \Id_{X^*}\otimes 
c^{-1}_{X,X^{**}}\,\circ\,  \cA^{-1}_{X^*,X^{**},X} } &&
&& 
}
\end{equation*}
Thus, we get that:
\begin{align*}
\theta_X^{-1}=\psi^{-1}_Xu_X &= 
\psi_X^{-1}\circ(\text{ev}_X\otimes \Id_{X^{**}})\circ
\cA_{X^*,X,X^{**} } \circ(\Id_{X^*}\otimes c^{-1}_{X,X^{**}})\circ 
\cA^{-1}_{X^*,X^{**},X}\circ(\coev_{X^*}\otimes \Id_X)
\end{align*}
Using naturality of associativity and braiding, we also get that
\begin{align*}
\psi_X^{-1}\circ & (\text{ev}_X\otimes \Id_{X^{**}})\circ
\cA_{X^*,X,X^{**} } \circ(\Id_{X^*}\otimes c^{-1}_{X,X^{**}}) \\
&=(\text{ev}_X\otimes \Id_{X})\circ( \Id_{X^*\otimes X}\otimes\psi^{-1}_X) \circ
\cA_{X^*,X,X^{**} } \circ(\Id_{X^*}\otimes c^{-1}_{X,X^{**}}) \\
&=(\text{ev}_X\otimes \Id_{X})\circ \cA_{X^*,X,X }\circ
( \Id_{X^*}\otimes (\Id_X \otimes\psi^{-1}_X)) \circ(\Id_{X^*}\otimes c^{-1}_{X,X^{**}}) \\
&=(\text{ev}_X\otimes \Id_{X})\circ \cA_{X^*,X,X }\circ
(\Id_{X^*}\otimes c^{-1}_{X,X})\circ ( \Id_{X^*}\otimes (\psi^{-1}_X\otimes\Id_X)) 
\end{align*}
Putting everything together,
\begin{align*}
\theta_X^{-1}
&=(\text{ev}_X\otimes \Id_{X})\circ \cA_{X^*,X,X }\circ
(\Id_{X^*}\otimes c^{-1}_{X,X})\circ ( \Id_{X^*}\otimes (\psi^{-1}_X\otimes\Id_X)) 
\circ \cA^{-1}_{X^*,X^{**},X}\circ (\coev_{X^*}\otimes \Id_X)\\
&=\ptr_X^L(c^{-1}_{X,X})
\end{align*}
\end{proof}
Taking the trace of left and right hand side of equation \eqref{eqn:ptrLc} we get
the following Corollary.
\begin{cor}
In a ribbon category
\[
\tr_{X\otimes X}\left(c_{X, X}^{-1}\right) =\tr_X\left(\theta_X^{-1}\right).
\]
\end{cor}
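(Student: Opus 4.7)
The plan is to obtain this corollary as an immediate consequence of the preceding theorem, which already asserts $\ptr_X^L(c_{X,X}^{-1})=\theta_X^{-1}$ as morphisms $X\to X$. All that remains is to apply the (full) categorical trace $\tr_X$ to both sides and to recognize that $\tr_X\circ\ptr_X^L$ on an endomorphism of $X\otimes X$ reproduces the full trace $\tr_{X\otimes X}$.

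First I would take $\tr_X$ of the right-hand side, which directly yields $\tr_X(\theta_X^{-1})$. Then for the left-hand side I would invoke the standard partial-trace composition property in a spherical (equivalently, ribbon with pivotal structure as above) category: for any $f\in\Hom(X\otimes Y,X\otimes Y)$,
\[
\tr_Y(\ptr_X^L(f))=\tr_{X\otimes Y}(f).
\]
Specializing to $Y=X$ and $f=c_{X,X}^{-1}$ gives $\tr_X(\ptr_X^L(c_{X,X}^{-1}))=\tr_{X\otimes X}(c_{X,X}^{-1})$. Combining this with the theorem then produces the claimed identity.

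The only non-routine ingredient is the partial-trace identity itself, and this is where I would spend the verification effort if it has not already been cited. I would prove it by writing out both sides using the definitions of $\tr$, $\ptr_X^L$, $\ev$, $\coev$, $\psi$, and the associativity morphisms, and then simplifying using the naturality of the braiding and associators, the rigidity (duality) axioms, and sphericality, in essentially the same diagrammatic style used in the proof of the preceding theorem. In graphical calculus this is the familiar fact that closing the remaining $Y$-strand of a partially closed diagram produces the fully closed diagram; writing it algebraically just amounts to threading the coevaluation/evaluation on $Y$ through the already-present coevaluation/evaluation on $X$.

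The only potential obstacle is bookkeeping: making sure that the pivotal isomorphisms $\psi$ and the associators $\cA$ are inserted at the right places so that the left partial trace and the outer trace on $Y$ genuinely combine into the trace on $X\otimes Y$ (rather than differing by a $\psi\otimes\psi$ vs.\ $\psi_{X\otimes Y}$ discrepancy). In a ribbon category this is guaranteed by the compatibility of the pivotal structure with tensor products together with sphericality, so I do not anticipate any real difficulty; the corollary is then immediate.
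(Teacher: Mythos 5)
Your proposal is correct and matches the paper's own argument: the paper likewise obtains the corollary by taking the trace of both sides of $\ptr_X^L(c_{X,X}^{-1})=\theta_X^{-1}$ and invoking the partial-trace composition identity $\tr_{X\otimes Y}(f)=\tr_X\bigl(\ptr_Y^L(f)\bigr)$. Your extra remarks on how one would verify that identity diagrammatically go beyond what the paper records, but the route is the same.
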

\begin{proof}
This follows since
$\tr_{X\otimes Y}(f) = \tr_X\left(\ptr_Y^L(f)\right)$ for any
endomorphism $f: Y\otimes X\rightarrow Y\otimes X$.
\end{proof}
\begin{remark}
Recall that our ribbon category is not necessarily semi-simple. If it
is not, then the trace might vanish on a tensor ideal. Call such an
ideal $\cP$ for projective.  If it is generated by an ambidextrous
element it allows for a modified trace on $\cP$ that we call $\mt$. In
that case one gets an analogous result
\[
\mt_{X\otimes X}\left(c_{X, X}^{-1}\right) =\mt_X\left(\theta_X^{-1}\right)
\]
for $X$ in $\cP$ as the projective trace also satisfies
$\mt_{X\otimes Y}(f) = \mt_X\left(\ptr_Y^L(f)\right)$ for any
 $f$ in End$(X\otimes Y)$ and any projective
module $X$. It does not matter wether $Y$ is projective or not.  The
importance of the modified trace in $C_2$-cofinite non-rational VOAs
is illustrated in \cite{CG}, the ideas on the modified trace there
follow \cite{GKP1, GKP2}.
\end{remark}
\begin{defn}
We call an invertible simple object a simple current and a simple current
that is its own inverse self-dual.
\end{defn}

Let $J$ be a simple current, we define the short-hand notation  $J^2:= J\otimes J$, 
and $J^{-1}$ for the inverse. 
Its categorical dimension is 
\[
\qdim(J)=\tr_J(\Id_J).
\]
It satisfies 
\[
\qdim(J)\qdim(J)=\qdim\left(J^{ 2}\right)\qquad
\text{and}\qquad \qdim(J)\qdim(J^{-1})=\qdim(\unit) =1.
\]
So the dimension of $J$ is non-zero.  Let $J$ now be self-dual and
$\theta_J \in \{\pm \id_J\}$, then it follows that also
$c_{J, J}\in \{ \pm 1 \}$ (our field is $\text{End}(\unit)$) due to the
balancing axiom of the twists.  Also the
dimension of $J$ can only be either one or minus one, namely
\begin{cor}\label{cor:spinstatistics}
Let $J$ be a self-dual simple current then 
\[
c_{J, J}\qdim(J) = \tilde\theta_J
\]
with $\theta_J=\tilde\theta_J\id_J$. 
In particular, if 
$\theta_J \in \{\pm \id_J\}$, then $c_{J, J}\in \{\pm 1\}$ and
$\qdim(J)\in\{\pm 1\}$.
\end{cor}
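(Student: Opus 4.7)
The plan is to specialize the preceding corollary to $X = J$ and exploit the fact that $J$ is a self-dual simple current.

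First I would observe that since $J$ is a self-dual simple current, $J \otimes J \cong \unit$, so $\Hom(J \otimes J, J \otimes J) \cong \Hom(\unit, \unit) = \text{End}(\unit)$, which is the ground field. In particular the braiding $c_{J,J}$ is a scalar endomorphism of $J \otimes J$, which justifies the notational identification $c_{J,J} \in \{\pm 1\}$ used in the statement. For the same reason $\theta_{J \otimes J}$ is a scalar, and $\qdim(J \otimes J) = \qdim(\unit) = 1$.

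Next, I would apply the corollary $\tr_{X \otimes X}(c_{X,X}^{-1}) = \tr_X(\theta_X^{-1})$ with $X = J$. On the left-hand side, $c_{J,J}^{-1}$ is a scalar, so
\begin{align*}
\tr_{J \otimes J}(c_{J,J}^{-1}) = c_{J,J}^{-1} \cdot \qdim(J \otimes J) = c_{J,J}^{-1}.
\end{align*}
On the right-hand side, writing $\theta_J = \tilde\theta_J \id_J$,
\begin{align*}
\tr_J(\theta_J^{-1}) = \tilde\theta_J^{-1} \cdot \qdim(J).
\end{align*}
Equating and rearranging yields $c_{J,J} \qdim(J) = \tilde\theta_J$, which is the first claim.

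For the second part, I would invoke the balancing axiom: since $J \otimes J \cong \unit$, the twist satisfies
\begin{align*}
\id_\unit = \theta_{J \otimes J} = c_{J,J} \circ c_{J,J} \circ (\theta_J \otimes \theta_J).
\end{align*}
If $\theta_J = \pm \id_J$, then $\theta_J \otimes \theta_J = \id_{J \otimes J}$, hence $c_{J,J}^2 = 1$ as a scalar, forcing $c_{J,J} \in \{\pm 1\}$. Substituting into the identity $c_{J,J} \qdim(J) = \tilde\theta_J = \pm 1$ immediately gives $\qdim(J) \in \{\pm 1\}$. I do not anticipate any serious obstacle: the only subtle point is the scalar identification for endomorphisms of the unit object, which was already implicitly used in the author's discussion preceding the corollary.
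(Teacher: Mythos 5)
Your proof is correct and follows essentially the same route the paper intends: the paper states the corollary as an immediate consequence of $\tr_{X\otimes X}(c_{X,X}^{-1})=\tr_X(\theta_X^{-1})$ specialized to $X=J$ with $J\otimes J\cong\unit$, together with the balancing axiom for the $\pm 1$ statements, which is exactly what you do. The scalar identifications via $\mathrm{End}(J\otimes J)\cong\mathrm{End}(\unit)$ that you spell out are precisely the steps the paper leaves implicit.
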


\subsection{Monodromy}

We now only assume that our monoidal category is braided but not necessarily ribbon. 

\begin{defn}
For objects $A,B\in\cC$, define the monodromy
$M_{A,B}:A\otimes B \rightarrow A\otimes B$ to be $c_{B,A}\circ c_{A,B}$.
\end{defn}

\begin{lemma}\label{lem:MisNatural}
Monodromy is natural. In particular, 
$$M_{(A\otimes B)\otimes C, D}\circ (\cA_{A,B,C}\otimes \Id_D)=
(\cA_{A,B,C}\otimes \Id_D)\circ M_{A\otimes(B\otimes C),D}.$$
Therefore, if $(Y\otimes J^i)_1$ and $(Y\otimes J^i)_2$ are two different
ways to parenthesize $Y\otimes J^{\otimes i}$ and $( J^i\otimes Y)_1$
and $( J^i\otimes Y)_2$ are two different ways to parenthesize
$J^{\otimes i}\otimes Y$, then $M_{(Y\otimes J^i)_1,X}=\Id$ implies
$M_{(Y\otimes J^i)_2,X}=\Id$ and $M_{X,(J^i\otimes Y)_1}=\Id$ implies
$M_{X,( J^i\otimes Y)_2}=\Id$.
\end{lemma}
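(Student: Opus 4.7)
The plan is to derive everything from the naturality of the braiding applied twice. Given morphisms $f:A\to A'$ and $g:B\to B'$, I would first compute
\begin{align*}
M_{A',B'}\circ (f\otimes g) &= c_{B',A'}\circ c_{A',B'}\circ (f\otimes g)\\
&= c_{B',A'}\circ (g\otimes f)\circ c_{A,B}\\
&= (f\otimes g)\circ c_{B,A}\circ c_{A,B}\\
&= (f\otimes g)\circ M_{A,B},
\end{align*}
where the second and third equalities are naturality of $c$. This is the naturality statement for $M$ in both variables.

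The displayed equation is the special case $f=\cA_{A,B,C}: A\otimes(B\otimes C)\to (A\otimes B)\otimes C$ and $g=\Id_D$, so it requires no further argument once naturality is established.

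For the parenthesization claim, I would invoke Mac Lane's coherence theorem: any two parenthesizations $(Y\otimes J^i)_1$ and $(Y\otimes J^i)_2$ of the same tensor word are linked by a unique isomorphism $\phi$ built as a composition of associativity constraints. By the naturality of $M$ just established, applied to $\phi\otimes \Id_X$, we have
\begin{equation*}
M_{(Y\otimes J^i)_2,X}\circ (\phi\otimes \Id_X) = (\phi\otimes \Id_X)\circ M_{(Y\otimes J^i)_1,X}.
\end{equation*}
If $M_{(Y\otimes J^i)_1,X}=\Id$, the right-hand side equals $\phi\otimes \Id_X$, and since $\phi\otimes \Id_X$ is invertible we may cancel it to conclude $M_{(Y\otimes J^i)_2,X}=\Id$. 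The symmetric statement for parenthesizations of $J^{\otimes i}\otimes Y$ follows by the same argument with $\Id_X\otimes \phi$ on the right instead of $\phi\otimes \Id_X$ on the left. No step here is a serious obstacle; the only thing to be careful about is invoking coherence so that we know the relevant reparenthesization isomorphism is canonical (so it really is an isomorphism in $\cC$ that can be cancelled on both sides).
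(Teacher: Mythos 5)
Your proposal is correct and follows essentially the same route as the paper, which simply notes that naturality of the monodromy follows from naturality of the braiding (applied twice, exactly as in your computation) and that the reparenthesization claims then follow by applying this naturality to the canonical associativity isomorphisms and cancelling them. Your write-up is just a fully spelled-out version of the paper's one-line argument.
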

\begin{proof}
Naturality of monodromy is implied by the naturality of braiding.
The rest follows.
\end{proof}

\begin{thm}\label{thm:monodromy}
The following hold for monodromy.
\begin{enumerate}
\item For objects $A,B,C \in \cC$ such that 
$M_{A,C}=\Id_{A\otimes C}$,
we have
\begin{align}
M_{A,B\otimes C} &= \cA^{-1}_{A,B,C}\circ(M_{A, B}\otimes \Id_C)\circ\cA_{A,B,C},
\label{eqn:monA-BC}\\
M_{A\otimes B, C} &= \cA_{A,B,C}\circ(\Id_A\otimes M_{B, C})\circ\cA^{-1}_{A,B,C}
\label{eqn:monAB-C}
\end{align}
\item If $M_{J,X}=\Id_{J\otimes X}$ and $M_{Y,X}=\Id_{Y\otimes X}$ 
then $M_{Y\otimes J^i,X}=\Id_{(Y\otimes J^{i})\otimes X}$,
for all positive integers $i$, regardless of how $Y\otimes J^i$ is parenthesized.
\item If $M_{X,J}=\Id_{X\otimes J}$  and $M_{X,Y}=\Id_{X\otimes Y}$ 
 then $M_{X,J^i\otimes Y}=\Id_{X\otimes(J^{i}\otimes Y)}$,
for all positive integers $i$, regardless of how $J^i\otimes Y$ is parenthesized.

\item If $M_{J,J}=\Id_{J\otimes J}$ then $M_{J^i,J^j}=\Id_{J^i\otimes J^j}$ for all $i,j \in \N$,
regardles of how $J^i$ and $J^j$ are parenthesized.

\item If $M_{J,J}=\Id_{J\otimes J}$ and $M_{J,X}=\Id_{J\otimes X}$ then
$M_{J^i,J^j\otimes X}=\Id_{J^i\otimes(J^j\otimes X)}$, for all $i,j\in\N$, regardles of how
$J^i$ and $J^j\otimes X$ are parenthesized.
\item If $J$ is an invertible object then $M_{J,X}=\Id_{J\otimes X}$ implies
$M_{J^{-1},X}=\Id_{J^{-1}\otimes X}$ and $M_{X,J}=\Id_{X\otimes J}$ implies $M_{X,J^{-1}}=\Id_{X\otimes J^{-1}}$.
\item If $J$ is an invertible object and $M_{J,J}=\Id_{J\otimes J}$ then
$M_{J^i,J^j}=\Id_{J^i\otimes J^j}$ for all $i,j\in\Z$, regardles of how $J^i$ and
$J^j$ are parenthesized.

\item If $J$ is an invertible object with $M_{J,J}=\Id_{J\otimes J}$ and $X$ is
such that $M_{J,X}=\Id_{J\otimes X}$ then $M_{J^i,J^j\otimes X}=\Id_{J^i\otimes(J^j\otimes X)}$, for all
$i,j\in\Z$, regardles of how $J^i$ and $J^j\otimes X$ are parenthesized.
\end{enumerate}
\end{thm}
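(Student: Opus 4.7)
The plan is to take part (1) as the core computational step and then derive parts (2)--(8) by induction and formal manipulations, using Lemma \ref{lem:MisNatural} throughout to reduce to convenient parenthesizations.

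For part (1), I would apply the two hexagon axioms to expand $c_{A, B \otimes C}$ and $c_{B \otimes C, A}$ as compositions of braidings on single factors interleaved with associators, and then form their composite $M_{A, B \otimes C}$. A central pair of the resulting braidings collapses to $\Id_B \otimes M_{A, C}$, which becomes the identity by the hypothesis $M_{A, C} = \Id$; two adjacent associators then cancel, and (after some bookkeeping that may need the pentagon identity) the remaining factors assemble into $\cA^{-1}_{A, B, C} \circ (M_{A, B} \otimes \Id_C) \circ \cA_{A, B, C}$. The second identity is proved by the symmetric argument starting from the other hexagon. The careful tracking of associators is the main obstacle; once (1) is established, the rest of the theorem is formal.

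For parts (2) and (3), I would induct on $i$, choosing the left-parenthesization $(Y \otimes J^i) \otimes J$ of $Y \otimes J^{i+1}$ and then invoking Lemma \ref{lem:MisNatural} to transfer the conclusion to arbitrary parenthesizations. Applying part (1) with $A = Y \otimes J^i$, $B = J$, $C = X$, the inductive hypothesis $M_{Y \otimes J^i, X} = \Id$ and the assumption $M_{J, X} = \Id$ combine to yield $M_{(Y \otimes J^i) \otimes J, X} = \Id$; part (3) is symmetric. For part (4), I would first derive $M_{J, J^j} = \Id$ for all $j \geq 1$ by induction on $j$ via (3), and then induct on $i$ applying (2) with $Y = J^i$. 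Part (5) requires a similar bootstrap: first establish $M_{J^i, X} = \Id$ by iterating (2), then apply (3) combined with $M_{J^i, J} = \Id$ from (4).

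For part (6), I would apply the second identity of part (1) with $A = J$, $B = J^{-1}$, $C = X$, using the given $M_{J, X} = \Id$. The left-hand side is $M_{J \otimes J^{-1}, X}$, which equals $M_{\unit, X} = \Id$ thanks to the isomorphism $J \otimes J^{-1} \cong \unit$ and the naturality of monodromy. The equation then reduces to $\Id_J \otimes M_{J^{-1}, X} = \Id_{J \otimes (J^{-1} \otimes X)}$, and since tensoring with the invertible object $J$ is an equivalence of categories, this forces $M_{J^{-1}, X} = \Id$. The assertion about $M_{X, J^{-1}}$ is entirely analogous. Parts (7) and (8) then follow from (6) together with (4) and (5), which now extend to allow negative exponents.
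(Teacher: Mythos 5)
Your proposal takes essentially the same route as the paper's proof: part (1) is obtained from the two hexagon identities with the middle pair of braidings collapsing to $\Id_B\otimes M_{A,C}$ and the adjacent associators cancelling (the pentagon identity is in fact not needed), and parts (2)--(8) follow by the same inductions, appeals to Lemma \ref{lem:MisNatural}, and the invertibility of $J$ as in the paper. The only differences are trivial bookkeeping choices in how (4) and (5) are bootstrapped from (2) and (3).
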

\begin{proof}

For \eqref{eqn:monA-BC}, we proceed as follows.
By the hexagon diagram, we get:
\begin{align*}
M_{A,B\otimes C}&=c_{B\otimes C,A}\circ c_{A,B\otimes C} \\
&=   (\cA^{-1}_{A,B,C}\circ (c_{B,A}\otimes \Id_C) \circ 
\cA_{B,A,C}\circ(\Id_B\otimes c_{C,A})\circ\cA^{-1}_{B,C,A})\\
&\quad \circ
(\cA_{B,C,A} \circ (\Id_B\otimes c_{A,C})\circ\cA^{-1}_{B,A,C}
\circ(c_{A,B}\otimes\Id_C)\circ\cA_{A,B,C})\\
&=\cA^{-1}_{A,B,C}\circ (c_{B,A}\otimes \Id_C)
\circ(c_{A,B}\otimes\Id_C)\circ\cA_{A,B,C}.
\end{align*}
For \eqref{eqn:monAB-C} we proceed similarly.
Again by the hexagon diagram, we get:
\begin{align*}
M_{A\otimes B, C}&=c_{C, A\otimes B}\circ c_{A\otimes B, C} \\
&= (\cA_{A,B,C} \circ (\Id_A\otimes c_{C,B})
\circ\cA^{-1}_{A,C,B}\circ(c_{C,A}\otimes\Id_B)\circ\cA_{C,A,B})\\
&\quad \circ
  (\cA^{-1}_{C,A,B}\circ (c_{A,C}\otimes \Id_B) 
\circ \cA_{A,C,B}\circ(\Id_A\otimes c_{B,C})\circ\cA^{-1}_{A,B,C})\\
&=\cA_{A,B,C}\circ (\Id_A\otimes c_{C,B})
\circ(\Id_A\otimes c_{B,C})\circ\cA^{-1}_{A,B,C}.
\end{align*}

We first prove (2) when
$Y\otimes J^i= (\cdots ((Y\otimes J)\otimes J )\cdots \otimes J)$ for
all positive integers $i$.  Note that \eqref{eqn:monAB-C} implies
$M_{Y \otimes J,X} = \Id_{(Y\otimes J)\otimes X}$.  Therefore, by
induction on $i$, we conclude that
$M_{Y\otimes J^i,X}=\Id_{(Y\otimes J^i)\otimes X}$ for all
$i\in\Z_{+}$. Using Lemma \ref{lem:MisNatural} we can now get the
result for all different ways of parenthesizing $Y\otimes J^i$.

Using \eqref{eqn:monA-BC}, (3) follows in complete analogy.

For (4), note that the assertion holds if $i=0$ or $j=0$ because
$c_{{\unit},X}=\Id = c_{X,{\unit}}$ and hence
$M_{{\unit},X}=M_{X,{\unit}}$ for any object $X$.  Now let
$i,j\geq 1$.  If $i=j=1$, then the assertion follows by assumption
that $M_{J,J}=\Id$.  If $i=1$ or $j=1$, the claim follows by using
$Y={\unit}$ and $X=J$ in (2) and (3).  With this we have proved (4)
for $i\in\{0,1\}$ or $j\in\{0,1\}$.  Now if $i,j\geq 2$, using (2)
with $Y=J, X=J^j$, we obtain that $M_{J\otimes J^{i-1},J^j}=\Id$
regardless of how $J\otimes J^{i-1}$ and $J^j$ are parenthesized.

For (5), note that the assertion holds if $i=0$.  Indeed,
$c_{{\unit},X}=c_{X,{\unit}}=\Id_X$ results in $M_{{\unit},X}=\Id$;
which combined with (3) yields $M_{{\unit},J^j\otimes X}=\Id$.  The
assertion also holds if $j=0$ by taking $Y={\unit}$ in (2).  Using
this and (3), we obtain that
$M_{J^i,(J^j)\otimes X}=\cA^{-1}\circ(M_{J^i,J^j}\otimes \Id)\circ
\cA$
which in turn equals $\Id$ becuase of (4).  Now use Lemma
\ref{lem:MisNatural} to get the result for all parenthesizings of
$J^j\otimes X$.

For (6):
\begin{align*}
\Id_{X} &= M_{{\unit},X} \\
&= \cA_{J,J^{-1},X}\circ M_{J\otimes J^{-1},X}\circ \cA^{-1}_{J,J^{-1},A}\\
&= \cA_{J,J^{-1},X}\circ (\Id\otimes M_{J^{-1},X})\circ\cA^{-1}_{J,J^{-1},A}.
\end{align*}
Hence, $M_{J^{-1},X}$ must be $\Id$.
We proceed similarly for the rest.

For (7), using (6) we get that $M_{J,J}=\Id$ implies
$M_{J^{-1},J}=\Id$ and $M_{J,J^{-1}}=\Id$, either of which leads to
$M_{J^{-1},J^{-1}}=\Id$. The rest can be easily obtained as in the
proof of (2), (3) and (4).

Lastly, (8) is obtained by following the steps in the proof of (5),
(6) and (7).
\end{proof}

We will use the proposition above to give a lifting criterion for
simple modules in Corollary \ref{cor:liftingsimple}.  Next, we provide
some useful lemmata which will help us strengthen Corollary
\ref{cor:liftingsimple} to some indecomposable modules.

\begin{lemma}\label{lem:dimhomsame}
Let $J$ be a simple current. Then, for any $P$ and $X$,
such that $\dim(\Hom(P,X))< \infty$ and either 
$\dim(\Hom(P\otimes J, X\otimes J))< \infty$ or 
$\dim(\Hom(J\otimes P, J\otimes X))< \infty$, we have that
\begin{align}\label{eqn:dimhom}
\dim(\Hom(P,X))=\dim(\Hom(P\otimes J, X\otimes J))=
\dim(\Hom(J\otimes P, J\otimes X)).
\end{align}
\end{lemma}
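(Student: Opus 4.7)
The plan is to observe that since $J$ is a simple current, i.e.\ an invertible object with inverse $J^{-1}$ satisfying $J\otimes J^{-1}\cong \unit$, the functor $-\otimes J:\cC\to\cC$ is an equivalence of categories with quasi-inverse $-\otimes J^{-1}$. Any equivalence of categories induces bijections on Hom-sets, which immediately gives equality of dimensions (as cardinals, hence as natural numbers when finite). The finite-dimensionality hypotheses in the statement are then not essential for the vector-space isomorphism itself; they merely guarantee that the numbers appearing on both sides of \eqref{eqn:dimhom} are genuine natural numbers, and so the lemma is really a statement about the dimension-preserving nature of tensoring with an invertible object.

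Concretely, for the first equality I would define the linear map
\[
\Phi_R:\Hom(P,X)\to \Hom(P\otimes J, X\otimes J),\qquad \Phi_R(f)=f\otimes \Id_J,
\]
and exhibit an inverse $\Psi_R$ by the composite
\[
\Psi_R(g):\ P\xrightarrow{\ \cong\ }(P\otimes J)\otimes J^{-1}\xrightarrow{\ g\otimes\Id_{J^{-1}}\ }(X\otimes J)\otimes J^{-1}\xrightarrow{\ \cong\ }X,
\]
where the outer isomorphisms use the associator $\cA$, the unit isomorphism $\rho$, and a fixed choice of isomorphism $\eta:\unit\to J\otimes J^{-1}$. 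The identities $\Psi_R\circ\Phi_R=\Id$ and $\Phi_R\circ\Psi_R=\Id$ then follow from naturality of $\cA$ together with the triangle and pentagon coherence axioms.

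For the second equality I would run the entirely analogous argument on the left, using the quasi-inverse pair $(J\otimes -, J^{-1}\otimes -)$: define $\Phi_L(f)=\Id_J\otimes f$ and construct its inverse via
\[
\Psi_L(g):\ P\xrightarrow{\ \cong\ }J^{-1}\otimes(J\otimes P)\xrightarrow{\ \Id_{J^{-1}}\otimes g\ }J^{-1}\otimes(J\otimes X)\xrightarrow{\ \cong\ }X.
\]
This yields $\dim\Hom(P,X)=\dim\Hom(J\otimes P,J\otimes X)$, completing the chain of equalities.

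The only real content of the proof is the verification that $\Psi_R$ (resp.\ $\Psi_L$) is genuinely a two-sided inverse of $\Phi_R$ (resp.\ $\Phi_L$), which is the main potential source of bookkeeping complexity because it requires careful tracking of associators and unitors. However, this is routine once one applies naturality of the structural isomorphisms and the triangle/pentagon axioms in the monoidal category $\cC$, so no deep obstruction arises. The argument works uniformly in any braided (in fact any monoidal) category in which $J$ is invertible, so no use of the braiding, twist, or ribbon structure is required.
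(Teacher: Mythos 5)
Your proposal is correct, but it takes a genuinely different route from the paper. You prove that $-\otimes J$ (resp.\ $J\otimes -$) is an equivalence with quasi-inverse $-\otimes J^{-1}$ (resp.\ $J^{-1}\otimes -$), hence fully faithful, and read off the equality of dimensions from the resulting bijection of Hom-sets; as you note, this makes the finiteness hypotheses irrelevant and uses no braiding. The paper instead only proves \emph{injectivity} of $f\mapsto f\otimes\Id_J$ (if $f\otimes\Id_J=0$, then conjugating $f\otimes\Id_{J\otimes J^{-1}}$ by a fixed isomorphism $g:\unit\to J\otimes J^{-1}$ forces $f\otimes\Id_{\unit}=0$, hence $f=0$), giving $\dim\Hom(P,X)\le\dim\Hom(P\otimes J,X\otimes J)$; it then applies the same injectivity with $J^{-1}$ in place of $J$ and the isomorphism $\Hom((P\otimes J)\otimes J^{-1},(X\otimes J)\otimes J^{-1})\cong\Hom(P,X)$ to get the reverse inequality, and squeezes. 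This is why the finiteness hypotheses appear in the statement, and the braiding is used only to observe that the two alternative finiteness hypotheses are equivalent. Your approach buys a cleaner and more general statement (a genuine linear isomorphism of Hom-spaces with no finiteness needed); the paper's buys a shorter verification, since it never has to establish fullness. One small caveat on your write-up: the identity $\Phi_R\circ\Psi_R=\Id$ does not follow from the triangle and pentagon axioms alone applied to your explicit $\Psi_R$; the clean way to get it is the standard argument that a functor admitting a quasi-inverse is fully faithful (given $g$, set $f=\Psi_R(g)$, use naturality of the isomorphism $(-\otimes J)\otimes J^{-1}\cong\Id$ to conclude $\Psi_R(\Phi_R(f))=\Psi_R(g)$, and then invoke the already-established injectivity of $\Psi_R$, which in turn uses the natural isomorphism for the other composite). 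With that adjustment your argument is complete.
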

\begin{proof}
Since braiding is an isomorphism, 
$\dim(\Hom(P\otimes J, X\otimes J))< \infty$ 
if and only if
$\dim(\Hom(J\otimes P, J\otimes X))< \infty$.

The conclusion holds if $J={\unit}$. That is,
$\bullet\otimes\Id_{{\unit}}$ is an isomorphism.  Fix an
isomorphism $g:{{\unit}}\rightarrow J\otimes J^{-1}$.  We first prove that
$\dim(\Hom(P,X))\leq \dim(\Hom(P\otimes J, X\otimes J))$.
Consider the map
$\bullet\otimes\Id_J: \Hom(P,X)\rightarrow \Hom(P\otimes J,
X\otimes J)$.  If $f\otimes\Id_J=0$, then,
\begin{align*}
0 &= \cA_{X,J,J^{-1}}^{-1}\circ((f\otimes\Id_J)\otimes\Id_{J^{-1}})\\
&= f\otimes(\Id_{J}\otimes\Id_{J^{-1}})=f\otimes(\Id_{J\otimes J^{-1}})\\
&=(\Id_X\otimes g)\circ (f\otimes\Id_{\unit})\circ(\Id_P\otimes g^{-1}).
\end{align*}
Therefore, $f\otimes\Id_{\unit}=0$. Since
$\bullet\otimes\Id_{\unit}$ is an isomorphism, $f=0$.

We turn to the converse direction. Replacing $J$ by $J^{-1}$ in the argument above,
we have that
\begin{align*}
\dim(\Hom(P\otimes J,X\otimes J))\leq 
\dim(\Hom((P\otimes J)\otimes J^{-1},(X\otimes J)\otimes J^{-1})).
\end{align*}
However, since associativity is an isomorphism and since
$J\otimes J^{-1}\cong {{\bf 1}_\cC}$
\begin{align*}
\Hom((P\otimes J)\otimes J^{-1},(X\otimes J)\otimes J^{-1})
&\cong\Hom(P\otimes (J\otimes J^{-1}),X\otimes (J\otimes J^{-1}))\\
&\cong\Hom(P\otimes {{\bf 1}_\cC},X\otimes {{\bf 1}_\cC})\\ 
&\cong\Hom(P,X).
\end{align*}
For showing $\dim(\Hom(P,X))=\dim(\Hom(J\otimes P, J\otimes X))$, 
one proceeds similarly.
\end{proof}

\begin{lemma}\label{lem:JPmon}
Let $J$ be a finite order simple current such that $J^N\cong{\unit}$ 
for some $N\in\Z_+$. Let $P$ be any
object such that $\dim(\Hom(P,P))<\infty$
and 
$\dim(\Hom(J\otimes P, J\otimes P))<\infty$.  Assume
that $M_{J,P}=\lambda\Id_{J\otimes P}+\pi$ where $\pi$ is a nilpotent
endomorphism of $J\otimes P$.  Then,
$\pi=0$, equivalently,
$M_{J,P}$ is a semi-simple endomorphism. Moreover,
$\lambda^N=1$.
\end{lemma}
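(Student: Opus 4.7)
The plan is to encode the monodromy with $J$ as a natural endomorphism of the identity functor of $\cC$ and then exploit the relation $J^N \cong \one$. First, since $J$ is invertible, the tensor functor $J \otimes - : \cC \to \cC$ is a self-equivalence of $\cC$, and hence the assignment $g \mapsto \Id_J \otimes g$ gives an algebra isomorphism $\Hom(X,X) \xrightarrow{\sim} \Hom(J \otimes X, J \otimes X)$ for every object $X$, in particular preserving nilpotency. I would define $z_X \in \Hom(X,X)$ as the unique endomorphism satisfying $M_{J,X} = \Id_J \otimes z_X$; naturality of the braiding in the second slot then makes $z = \{z_X\}$ a natural transformation $\Id_\cC \to \Id_\cC$.

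Next, using the hexagon identity one shows, as in the proof of Theorem \ref{thm:monodromy}, that
\begin{align*}
M_{J^a \otimes J^b, X} = (\Id_{J^a} \otimes c_{X, J^b}) \circ (M_{J^a, X} \otimes \Id_{J^b}) \circ (\Id_{J^a} \otimes c_{J^b, X}).
\end{align*}
Writing $M_{J^n, X} = \Id_{J^n} \otimes w^{(n)}_X$ (valid for every $n$, since $J^n$ is also invertible) and substituting, naturality of the braiding allows one to move $w^{(a)}_X$ past $c_{X, J^b}$, so that the middle portion collapses to $\Id_{J^b} \otimes w^{(a)}_X$, which then absorbs into $M_{J^b, X} = \Id_{J^b} \otimes w^{(b)}_X$ and yields the recursion $w^{(a+b)}_X = w^{(a)}_X \circ w^{(b)}_X$. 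Together with $w^{(1)} = z$, induction gives $M_{J^n, X} = \Id_{J^n} \otimes z_X^n$ for all $n \geq 1$. Taking $n = N$ and using any isomorphism $J^N \cong \one$ to identify $M_{J^N, X}$ with $M_{\one, X} = \Id$, the equality $\Id_{J^N} \otimes z_X^N = \Id_{J^N \otimes X}$ collapses, via the iso $J^N \otimes X \cong X$, to $z_X^N = \Id_X$.

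Specializing now to $X = P$, the hypothesis $M_{J,P} = \lambda \Id + \pi$ with $\pi$ nilpotent pulls back through the algebra isomorphism to $z_P = \lambda \Id_P + \pi'$ with $\pi' \in \Hom(P,P)$ nilpotent. Expanding $z_P^N = \Id_P$ in the finite dimensional algebra $\Hom(P,P)$ gives
\begin{align*}
(\lambda^N - 1)\Id_P + \sum_{k=1}^{N} \binom{N}{k} \lambda^{N-k} (\pi')^k = 0.
\end{align*}
The sum is a polynomial in $\pi'$ with zero constant term, and since $\pi'$ commutes with polynomials in itself it is nilpotent; thus $(\lambda^N - 1)\Id_P$ is a scalar multiple of the non-nilpotent $\Id_P$ and simultaneously nilpotent, forcing $\lambda^N = 1$. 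Factoring $\pi'$ from the remaining relation yields $\pi' \cdot \bigl(N \lambda^{N-1} + \binom{N}{2} \lambda^{N-2} \pi' + \cdots + (\pi')^{N-1}\bigr) = 0$, and the bracketed factor is a unit in characteristic zero—being a nonzero scalar plus a commuting nilpotent—so $\pi' = 0$, and hence $\pi = 0$.

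The main technical step, and the one requiring most care, is establishing the formula $M_{J^n, X} = \Id_{J^n} \otimes z_X^n$, since this requires invoking the hexagon axiom together with the naturality of the braiding while systematically suppressing (or tracking) the associativity isomorphisms, much in the spirit of Theorem \ref{thm:monodromy}. Once that formula is in hand, the rest of the argument reduces to the standard observation that a scalar-plus-nilpotent equal to the identity in a finite dimensional algebra must have trivial nilpotent part and unit scalar part.
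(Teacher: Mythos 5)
Your proof is correct and follows essentially the same route as the paper: establish $M_{J^n,P}=\Id_{J^n}\otimes z_P^n$ by the hexagon identity and naturality (the paper writes this as the binomial expansion of $(\lambda\Id_P+\nu)^n$), specialize to $n=N$ where $J^N\cong\unit$ forces $z_P^N=\Id_P$, and then run the scalar-plus-nilpotent argument. The only cosmetic difference is that you obtain the identification $\Hom(P,P)\cong\Hom(J\otimes P,J\otimes P)$ from full faithfulness of the equivalence $J\otimes\bullet$, whereas the paper invokes its Lemma \ref{lem:dimhomsame} via the finiteness hypotheses; both are valid.
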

\begin{proof}
Lemma \ref{lem:dimhomsame} in fact shows that $\Id_J\otimes\bullet$
provides an isomorphism $\Hom(P,P)\cong\Hom(J\otimes P, J\otimes P)$
and we conclude that $\pi = \Id_J\otimes\nu$ for some nilpotent
endomorphism $\nu$ of $P$.

We claim that for any $n\in\Z_+$, regardless of how
$J^n$ is parenthesized,
\begin{align}\label{eqn:Mbinom}
M_{J^n,P}=\sum\limits_{i=0}^n {n\choose i} \lambda^i \Id_{J^n}\otimes \nu^{n-i}.
\end{align}
Equation \eqref{eqn:Mbinom} holds for $n=0$ since $M_{{\bf 1}_\cC,X}=\Id_X$
for any object $X$ and for $n=1$ by assumption.
We proceed by induction. Assume that the claim holds for some
$n\in\Z_+$.
By Lemma \ref{lem:MisNatural}, it is enough to prove the claim when $J^n$
is parenthesized so that $J^n=J\otimes J^{n-1}$.
Exactly as in the proof of Theorem \ref{thm:monodromy},
\begin{align*}
M_{J\otimes J^n, P}&=c_{P, J\otimes J^n}\circ c_{J\otimes J^n, P} \\
&= \cA_{J,J^n,P} \circ (\Id_J\otimes c_{P,J^n})
\circ\cA^{-1}_{J,P,J^n}\circ (M_{J,P}\otimes \Id _{J^n})
\circ \cA_{J,P,J^n}\circ(\Id_J\otimes c_{J^n,P})\circ\cA^{-1}_{J,J^n,P}\\
&=\cA_{J,J^n,P} \circ (\Id_J\otimes c_{P,J^n})
\circ\cA^{-1}_{J,P,J^n}\circ (\lambda\Id_{J\otimes P}\otimes \Id _{J^n})
\circ \cA_{J,P,J^n}\circ(\Id_J\otimes c_{J^n,P})\circ\cA^{-1}_{J,J^n,P}
\\
&\quad + \cA_{J,J^n,P} \circ (\Id_J\otimes c_{P,J^n})
\circ\cA^{-1}_{J,P,J^n}\circ ((\Id_J\otimes \nu)\otimes \Id _{J^n})
\circ \cA_{J,P,J^n}\circ(\Id_J\otimes c_{J^n,P})\circ\cA^{-1}_{J,J^n,P}.
\end{align*}
However, using naturality of braiding and associativity and
using the induction hypothesis, we observe that:
\begin{align*}
\cA_{J,J^n,P} \circ & (\Id_J\otimes c_{P,J^n})
\circ\cA^{-1}_{J,P,J^n}\circ (\lambda\Id_{J\otimes P}\otimes \Id _{J^n})
\circ \cA_{J,P,J^n}\circ(\Id_J\otimes c_{J^n,P})\circ\cA^{-1}_{J,J^n,P} \\
&=\lambda \cA_{J,J^n,P} \circ  (\Id_J\otimes M_{J^n,P})
\circ\cA^{-1}_{J,J^n,P}\\
&=\lambda \cA_{J,J^n,P} \circ  \left(\Id_J\otimes\left(
\sum\limits_{i=0}^n {n\choose i} \lambda^i \Id_{J^n}\otimes \nu^{n-i} \right)
\right)\circ\cA^{-1}_{J,J^n,P}\\
&= \Id_J\otimes\left(
\sum\limits_{i=0}^n {n\choose i} \lambda^{i+1} \Id_{J^n}\otimes \nu^{n-i} \right)\\
&= \sum\limits_{i=0}^n {n\choose i} \lambda^{i+1} \Id_{J\otimes J^n}\otimes \nu^{n-i}.
\end{align*}
and 
\begin{align*}
\cA_{J,J^n,P} \circ & (\Id_J\otimes c_{P,J^n})
\circ\cA^{-1}_{J,P,J^n}\circ ((\Id_J\otimes \nu)\otimes \Id _{J^n})
\circ \cA_{J,P,J^n}\circ(\Id_J\otimes c_{J^n,P})\circ\cA^{-1}_{J,J^n,P}\\
&=\cA_{J,J^n,P} \circ  (\Id_J\otimes c_{P,J^n})
\circ (\Id_J\otimes (\nu\otimes \Id _{J^n}))
\circ(\Id_J\otimes c_{J^n,P})\circ\cA^{-1}_{J,J^n,P}\\
&=\cA_{J,J^n,P} \circ  (\Id_J\otimes (\Id_{J^n}\otimes \nu))\circ  (\Id_J\otimes M_{J^n,P})
\circ\cA^{-1}_{J,J^n,P}\\
&=\cA_{J,J^n,P} \circ  (\Id_J\otimes (\Id_{J^n}\otimes \nu))\circ  
\left(\Id_J\otimes\left(
\sum\limits_{i=0}^n {n\choose i} \lambda^i \Id_{J^n}\otimes \nu^{n-i} \right)
\right)\circ\cA^{-1}_{J,J^n,P}\\
&=\cA_{J,J^n,P} \circ  
\left(\Id_J\otimes\left(
\sum\limits_{i=0}^n {n\choose i} \lambda^i \Id_{J^n}\otimes \nu^{n-i+1} \right)
\right)\circ\cA^{-1}_{J,J^n,P}\\
&=
\sum\limits_{i=0}^n {n\choose i} \lambda^i \Id_{J\otimes J^n}\otimes \nu^{n+1-i}. 
\end{align*}
Combining the two, we immediately get equation \eqref{eqn:Mbinom} for $n+1$.

Now, using equation \eqref{eqn:Mbinom} for $n=N$,
we get:
\begin{align*}
\Id_P = M_{{\bf 1}_\cC,P}=M_{J^N,P}=\sum\limits_{i=0}^N {N\choose i}
\lambda^i \Id_{J^N}\otimes \nu^{N-i}. 
\end{align*}
However, since $\nu$ is nilpotent, we immediately conclude that 
$\nu=0$ and $\lambda^N=1$.
\end{proof}

\subsection{$\cC$-superalgebras}

In this section, we generalize the notion of $\cC$-algebra of Kirillov-Ostrik to $\cC$-superalgebra. 
We closely follow their notation and results \cite{KO}.

For the rest of the work, we assume the category $\cC$ to be abelian,
and we assume that $\otimes$ naturally distributes over $\oplus$.
This distributivity will hold for the categories we shall consider,
thanks to Proposition 4.24 of \cite{HLZ}.

\begin{defn}
A $\cC$-superalgebra is an object $A = A^0\oplus A^1\in\cC$ ($A^0,A^1$ are objects in $\cC$)
with morphisms
\begin{align}
\mu: A \otimes A &\rightarrow A \\
\iota: \unit &\hookrightarrow{} A^0.
\end{align}
Such that the following conditions hold.  
\begin{enumerate}
\item $\mu$ respects the $\frac{1}{2}\Z$-grading: 
$\mu(\theta \otimes \theta)=\theta\circ\mu$.
\item $\mu$ respects the $\Z_2$-grading: 
$\mu(A^i\otimes A^j)\rightarrow A^{i+j}\hookrightarrow A$.
\item Associativity:
$$\mu \circ (\mu\otimes \Id_A)\circ \cA = \mu\circ(\Id_A\otimes\mu)$$
\item 
Commutativity:
$$\mu\vert_{A^i\otimes A^j} = (-1)^{ij}\cdot \mu \circ c_{A^i, A^j}$$
\item Unit:
$$ \mu\circ (\iota_A\otimes \Id_A)\circ \ell_A^{-1}=\Id_A$$
where
$$\ell_A: \unit\otimes A \rightarrow A$$
is the left unit isomorphism.
\end{enumerate}
Such an algebra is called \emph{haploid} if it has
\begin{enumerate}
\item[(6)] Uniqueness of unit: 
$$\text{dim\,Hom}(\unit,A)=1. $$
\end{enumerate}
\end{defn}

Following \cite{KO}, we define a natural category for representations
of a $\cC$-superalgebra.
\begin{defn}
Let $(A=A^0 \oplus A^1,\mu,\iota)$ be a $\cC$-superalgebra.  Define
a category $\rep A$ as follows.  The objects are pairs
$(W = W^0\oplus W^1,\mu_W)$, where $W^0, W^1\in\cC$,
$$\mu_W: A\otimes W \cong \bigoplus_{i,j\in\Z/2\Z}A^i\otimes W^j \rightarrow W$$
is a morphism satisfying:
\begin{enumerate}
\item $\mu_W: A^i\otimes W^j \rightarrow W^{i+j\pmod{2}},$
\item $\mu_W\circ(\mu \otimes \Id_W)\circ\cA = 
\mu_W\circ(\Id_A\otimes \mu_W): A\otimes(A\otimes W)\rightarrow W,$
\item $\mu_W\circ(\iota_A\otimes\Id_W) = \ell_W : \unit\otimes W \rightarrow W$.
\end{enumerate}
The morphisms are defined as:
\begin{align}
\Hom_{\rep A}&((M,\mu_M),(N,\mu_N)) \nonumber \\
&=\{ \varphi\in\Hom_\cC(M,N)\,|\,\mu_N\circ(\Id_A\otimes\varphi) 
=\varphi\circ\mu_M: A\otimes M\rightarrow N \}
\nonumber
\end{align}
\end{defn}

\begin{defn}
Define $\rep^0 A$ to be the full subcategory of $\rep A$ consisting
of objects $(W,\mu_W)$ such that
$$\mu_W\circ(c_{W,A}\circ c_{A,W}) = \mu_W:A\otimes W\rightarrow W.$$
\end{defn}

\begin{defn}
Given a $\cC$-superalgebra $(A=A^0\oplus A^1,\mu,\iota)$, define
$$\cF(X) = (A\otimes X = A^0\otimes X \oplus A^1\otimes X, 
(\mu\otimes\Id_X)\circ\cA_{A,A,X} ),$$
$$\cF(f)= \Id_A\otimes f $$
for $X$ an object in $\cC$ and $f$ a morphism.
\end{defn}

\begin{thm}
$\cF$ is a functor from  $\cC$ to $\rep A$.
\end{thm}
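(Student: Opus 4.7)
The proof proposal is to unwind the three pieces of the functoriality claim: (a) that $\cF(X)$ actually lies in $\rep A$; (b) that $\cF(f)$ is a morphism in $\rep A$ for every $f$ in $\cC$; (c) that $\cF$ preserves identities and composition. Pieces (b) and (c) will be essentially diagrammatic consequences of the bifunctoriality of $\otimes$ and the naturality of $\cA$, while (a) is where the real work sits.

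For (a), set $\cF(X)^0 = A^0\otimes X$ and $\cF(X)^1 = A^1\otimes X$; then condition (1) of the definition of $\rep A$ is immediate from the fact that $\mu(A^i\otimes A^j)\subseteq A^{i+j}$ and that $\otimes$ distributes over $\oplus$. The unit condition (3) is a one-line calculation from the triangle axiom of the monoidal structure together with the unit axiom for $\mu$: namely, $(\mu\otimes\Id_X)\circ\cA_{A,A,X}\circ(\iota\otimes\Id_{A\otimes X}) = (\mu\circ(\iota\otimes\Id_A))\otimes\Id_X\circ(\ell_A^{-1}\otimes\Id_X)^{-1}\cdot(\cdots)$, which the unit axiom of $A$ reduces to $\ell_{A\otimes X}$, as required.

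The heart of (a) is the associativity condition (2), which asserts the equality of two morphisms $A\otimes(A\otimes(A\otimes X))\to A\otimes X$. The plan is the standard one: expand the left-hand side using the definition of $\mu_{\cF(X)}$ twice, then insert a pentagon identity $\cA_{A\otimes A,A,X}\circ\cA_{A,A,A\otimes X}=(\cA_{A,A,A}\otimes\Id_X)\circ\cA_{A,A\otimes A,X}\circ(\Id_A\otimes\cA_{A,A,X})$ to align associativity morphisms. One then uses the associativity axiom $\mu\circ(\mu\otimes\Id_A)\circ\cA_{A,A,A}=\mu\circ(\Id_A\otimes\mu)$ of the algebra $A$, tensored on the right with $\Id_X$, to rewrite the composite. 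After one more application of naturality of $\cA$ to commute $\Id_A\otimes\mu$ past an associator, the right-hand side $(\mu\otimes\Id_X)\circ\cA_{A,A,X}\circ(\Id_A\otimes((\mu\otimes\Id_X)\circ\cA_{A,A,X}))$ drops out. This pentagon-plus-associativity bookkeeping is the only step requiring care, and it will be the main obstacle if one wants to write everything out rigorously; I would package it as a single commutative diagram rather than a chain of equations.

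For (b), given $f\colon X\to Y$ in $\cC$, one checks $\mu_{\cF(Y)}\circ(\Id_A\otimes\cF(f)) = \cF(f)\circ\mu_{\cF(X)}$. Both sides equal $(\mu\otimes f)\circ\cA_{A,A,X}$ after applying naturality of $\cA_{A,A,-}$ to commute $\Id_A\otimes(\Id_A\otimes f)$ past the associator, and using bifunctoriality of $\otimes$. For (c), functoriality $\cF(\Id_X)=\Id_{A\otimes X}$ and $\cF(g\circ f)=\cF(g)\circ\cF(f)$ is immediate from bifunctoriality. Collecting these three verifications establishes that $\cF$ is a functor $\cC\to\rep A$.
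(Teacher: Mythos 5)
Your proposal is correct and follows essentially the same route as the paper: verifying the grading condition from $\mu(A^i\otimes A^j)\subseteq A^{i+j}$, establishing the module associativity via a pentagon/naturality/algebra-associativity commutative diagram, checking the unit condition from the unit axioms and naturality of $\cA$, and confirming that $\cF(f)=\Id_A\otimes f$ is a morphism in $\rep A$ by naturality of the associator. The only addition is your explicit check that $\cF$ preserves identities and composition, which the paper leaves implicit since it is immediate from bifunctoriality of $\otimes$.
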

\begin{proof}
Let $W$ be an object in $\cC$. We now prove that $\cF(W)$ is an object of $\rep A$.
Since $\mu:A^i\otimes A^j \rightarrow A^{i+j\pmod{2}}$, it is clear
that
$\mu_W:A^i\otimes (A^j\otimes W) \rightarrow A^{i+j\pmod{2}}\otimes  W$.
Therefore, condition (1) is satisfied. 
For condition (2), consider
the following commuting diagram,
where the unlabeled arrows correspond to associativity isomorphisms,
obtained by using the  pentagon diagram,
naturality of associativity and by the associativity of $\mu$.
\begin{align*}
\xymatrixcolsep{4pc}
\xymatrix{
&   A\otimes(A\otimes(A\otimes W)) \ar[dr] \ar[d]& & \\
A\otimes(A\otimes W)\ar[d] &
(A\otimes A)\otimes(A\otimes W)\ar[l]_{\mu\otimes( \Id\otimes\Id)} \ar[d]  &  
A\otimes((A\otimes A)\otimes W)\ar[d] 
\ar[dr]^{\Id\otimes(\mu\otimes \Id)} & \\
(A\otimes A)\otimes W \ar[d]_{\mu\otimes\Id} &
((A\otimes A)\otimes A)\otimes W \ar[l]^{(\mu\otimes\Id)\otimes \Id} &  
(A\otimes (A\otimes A))\otimes W \ar[l] 
\ar[d]_{(\Id\otimes\mu)\otimes\Id} & A\otimes(A\otimes W)\ar[dl]\\
A\otimes W & & (A\otimes A)\otimes W \ar[ll]_{\mu\otimes \Id} & 
}
\end{align*}
This commutative diagram immediately establishes (2).

For (3), we have:
\begin{align*}
\ell_{A\otimes W}&=(\ell_A\otimes \Id)\circ\cA_{{\bf 1}_\cC,A,W}\\
&=((\mu\circ (\iota_A\otimes\Id))\otimes \Id)\circ\cA_{{\bf 1}_\cC,A,W}\\
&=(\mu\otimes\Id)\circ((\iota_A\otimes\Id)\otimes \Id)\circ\cA_{{\bf 1}_\cC,A,W}\\
&=(\mu\otimes\Id)\circ\cA_{A,A,W}\circ(\iota_A \otimes(\Id \otimes \Id)),
\end{align*}
where the first equality follows by the properties of left unit, the
second property follows by the left unit property of $A$ and the last
equality follows by naturality of associativity.

Now let $f: U\rightarrow W$ be a morphism in $\cC$.
Let $\varphi = \Id_A\otimes f: \cF(U)=A\otimes U\rightarrow A\otimes W=\cF(W)$.
Then,
\begin{align*}
\mu_{\cF(W)}\circ(\Id_A\otimes\varphi) 
&= (\mu\otimes \Id_W) \circ \cA_{A,A,W}\circ (\Id_A\otimes(\Id_A\otimes f)) \\
&= (\mu\otimes \Id_W) \circ ((\Id_A\otimes\Id_A)\otimes f)\circ \cA_{A,A,W} \\
&=  (\Id_A\otimes f)\circ(\mu\otimes \Id_W) \circ \cA_{A,A,W} \\
&=\varphi\circ \mu_{\cF(U)},
\end{align*}
where we have used naturality of associativity in the second equality.
\end{proof}

\section{Simple current extensions and algebras}

\begin{defn}
A vertex operator superalgebra is a triple $(V, \mathbf{1},\omega,Y)$, where 
$V$ has compatible gradings by $\frac{1}{2}\Z$ and $\Z_2$,
i.e., 
\begin{align}
V = V^0 \oplus V^1 = \bigoplus_{n\in\frac{1}{2}\Z}V^0_n \oplus \bigoplus_{n\in\frac{1}{2}\Z}V^1_n,
\end{align}
$Y$ is a map
\begin{align}
Y : V\otimes V \rightarrow V[[x,x^{-1}]],
\end{align}
such that the following axioms are satisfied. We let $Y(\omega,x)=\sum_{n\in\Z}L(n)x^{-n-2}$.

\begin{enumerate}
\item Axioms for grading: 

Lower truncation: $V_n=0$ for all sufficiently negative $n$.

Finite dimensionality: Each $V_n$ is finite dimensional.

$L(0)$-grading property: $V_n=\{v\in V\,|\,L(0)v=nv\}$.

$\mathbf{1}\in V^0_0$, $\omega\in V^0_2$.

\item Axioms for vacuum:

Left-identity property: $Y({\bf 1}, x)v = v$ for all $v\in V$.

Creation property: $\lim_{x\rightarrow 1}Y(v,1)\mathbf{1}$ exists and equals $v$ for all $v\in V$.

\item $L(-1)$-derivative property: $[L(-1), Y(v,x)]=Y(L(-1)v,x)=\dfrac{d}{dx}Y(v,x)$.

\item Virasoro relations: $[L(m),L(n)]=(m-n)L(m+n)+\dfrac{m^3-m}{12}\delta_{m,-n}c$

\item Jacobi identity: For $u \in V^i$, $v\in V^j$,
\begin{align*}
x_0^{-1}\delta&\left(\dfrac{x_1-x_2}{x_0}\right)Y(u,x_1)Y(v,x_2)
-(-1)^{ij}x_0^{-1}\delta\left(\dfrac{-x_2+x_1}{x_0}\right)Y(v,x_2)Y(v,x_1)\\
&=x_2^{-1}\delta\left(\dfrac{x_1-x_0}{x_2}\right)Y(Y(u,x_0)v,x_2).
\end{align*}
\end{enumerate}

\end{defn}

\begin{defn} 
A $\frac{1}{2}\Z$-graded vertex operator algebra is a vertex
operator superalgebra $V$ such that $V^1=0$.
\end{defn}

\begin{defn}
A vertex operator algebra is a $\frac{1}{2}\Z$-graded vertex
operator algebra that is in fact $\Z$-graded.
\end{defn}

\begin{defn}
Consider a vertex operator superalgebra $(V, \mathbf{1},\omega,Y)$.
A $V$-module is a vector space $W$ with compatible gradings by 
$\R$ and $\Z/2\Z$ i.e., 
\begin{align}
W = W^0 \oplus W^1 = \bigoplus_{n\in\R}W^0_n \oplus \bigoplus_{n\in\R}W^1_n,
\end{align}
equipped with a vertex operator map $Y_W$,
\begin{align}
Y_W : V\otimes W \rightarrow W[[x,x^{-1}]],
\end{align}
such that the following axioms are satisfied. We denote the modes of
$Y_W(\omega,x)$ by $L(n)$.

\begin{enumerate}
\item Axioms for grading: 

Lower truncation: $W_n=0$ for all  sufficiently negative $n$.

Finite dimensionality: Each $W_n$ is finite dimensional.

$L(0)$-grading property: $W_n=\{w\in W\,|\,L(0)w=nw\}$.

$\Z/2\Z$-grading compatibility: $Y_W: V^i \otimes W^j \rightarrow W^{i+j\pmod{2}}[[x,x^{-1}]]$.

\item Axioms for vacuum:

Left-identity property: $Y_W({\bf 1}, x)w = w$ for all $w\in W$.

\item $L(-1)$-derivative property: $[L(-1), Y(v,x)]=Y(L(-1)v,x)=\dfrac{d}{dx}Y(v,x)$.

\item Jacobi identity: For $u\in V^i$, $v \in V^j$,
\begin{align*}
x_0^{-1}\delta&\left(\dfrac{x_1-x_2}{x_0}\right)Y_W(u,x_1)Y_W(v,x_2)
-(-1)^{ij}x_0^{-1}\delta\left(\dfrac{-x_2+x_1}{x_0}\right)Y_W(v,x_2)Y_W(v,x_1)\\
&=x_2^{-1}\delta\left(\dfrac{x_1-x_0}{x_2}\right)Y_W(Y(u,x_0)v,x_2).
\end{align*}
\end{enumerate}
\end{defn}

\begin{remark}
For definitions involving complex variables instead of the formal
variables, refer to \cite{H6}.
\end{remark}

\begin{defn}
A module $W$ is called a generalized $V$-module if the $\R$-grading
on $W$ is by generalized eigenvalues of $L(0)$, i.e., $W$ is a
direct sum of generalized eigenspaces of $L(0)$.  Thus, a generalized
module $W$ is in fact a grading-restricted generalized module in the
sense of \cite{H6}.
\end{defn}

\begin{assumption}
We will work with the following assumption in the next few sections.
Assume that $V$ is a vertex operator algebra satisfying the
conditions required to invoke Huang-Lepowsky-Zhang's theory. We also
assume that braiding and twist are as given by Huang-Lepowsky-Zhang.
We assume that the tensor bifunctor is chosen to be $\boxtimes_{P(1)}$,
which we abbreviate to be $\boxtimes$.
\end{assumption}

\begin{defn}
Recall Definition 3.10, 4.2 and 4.13 of logarithmic intertwining
operators, $P(z)$-intertwining maps and $P(z)$-tensor products,
respectively, from \cite{HLZ}
and the definitions of rationality of products, rationality of
iterates, commutativity and associativity for the vertex operator
map and the module map from \cite{H6}.
\end{defn}

\begin{thm}\label{thm:constructingVOSA}
Let $J$ be a simple current such that 
$J\boxtimes J \cong V$ (which implies that $V$ is simple, see \cite{CLR}) and
$\theta_J=\pm \Id_J$ (which implies that $c_{J,J}\in\{\Id,-\Id\}$ by
balancing).  If $c_{J,J}=1$, $V\oplus J$ has a
structure of $\frac{1}{2}\Z$-graded vertex operator algebra and if
$c_{J,J}=-1$, $V\oplus J$ has a structure of vertex operator
superalgebra.
\end{thm}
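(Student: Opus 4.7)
The plan is to follow closely the proof of Theorem~4.1 in \cite{H6}, which establishes this kind of construction in the non-super case, while systematically tracking the sign $c_{J,J}\in\{+1,-1\}$ throughout. Set $W=V\oplus J$. The hypothesis $\theta_J=\pm\Id_J$ forces $h_J\in\tfrac12\Z$, so that the conformal weight grading on $W$ restricts to a $\tfrac12\Z$-grading, with $V$ in the even $\Z_2$-part and $J$ in the odd part. First I would assemble a candidate vertex operator $Y_e\colon W\otimes W\to W\{x\}$ out of four pieces corresponding to the fusion rules $V\boxtimes V\cong V$, $V\boxtimes J\cong J$, $J\boxtimes V\cong J$, and $J\boxtimes J\cong V$: the VOA structure of $V$ supplies $Y_e|_{V\otimes V}$; the $V$-module structure on $J$ supplies $Y_e|_{V\otimes J}$; a fixed nonzero logarithmic intertwining operator $\mathcal{Y}$ of type $\binom{V}{J\,J}$, unique up to scalar by $\dim\Hom(J\boxtimes J,V)=1$ and containing no $\log x$ terms because $2h_J-h_V\in\Z$, supplies $Y_e|_{J\otimes J}$; and skew-symmetry applied to the module map defines $Y_e|_{J\otimes V}$ via $Y_e(u,x)v=e^{xL(-1)}Y_e(v,-x)u$.

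Once $Y_e$ is in hand, the grading, vacuum, creation, $L(-1)$-derivative, and Virasoro axioms follow routinely: the vacuum and creation axioms use the identity property of the module action together with the skew-symmetry formula, the $L(-1)$-derivative property is a standard feature of logarithmic intertwining operators, and the Virasoro relations come from $V$ itself. The real content is the Jacobi identity with the correct sign. Following \cite{H6}, I would replace the Jacobi identity by the equivalent package of rationality of products and iterates, together with (super-)commutativity and associativity of three-, four-, and five-fold insertions, each of which is naturally encoded in the vertex tensor category structure of Huang-Lepowsky-Zhang \cite{HL1, HLZ}. Rationality, convergence, and associativity for such compositions are direct consequences of the vertex tensor category axioms combined with the isomorphism $J\boxtimes J\cong V$.

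The main obstacle, and the step where the parity appears, is commutativity when both insertions lie in $J$. Swapping $Y_e(u_1,x_1)Y_e(u_2,x_2)$ with $u_1,u_2\in J$ by analytic continuation from the region $|x_1|>|x_2|$ to $|x_2|>|x_1|$ produces exactly the braiding $c_{J,J}\colon J\boxtimes J\to J\boxtimes J$, which under the chosen identification $J\boxtimes J\cong V$ acts on $V$ by the scalar $c_{J,J}=\pm 1$. This is precisely the sign $(-1)^{|u_1||u_2|}$ demanded by the super-Jacobi identity, with $|v|=0$ for $v\in V$ and $|v|=1$ for $v\in J$. For mixed insertions, the relevant braidings $c_{V,J}$ and $c_{J,V}$ act as the identity via the unit coherences (since $V$ is the tensor unit), producing no extra sign and compatible with $|v|=0$ for $v\in V$. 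Hence when $c_{J,J}=+1$ the Jacobi identity holds without signs and $W$ is a $\tfrac12\Z$-graded VOA, while when $c_{J,J}=-1$ the Jacobi identity holds with the super sign and $W$ is a vertex operator superalgebra.
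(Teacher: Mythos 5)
Your overall architecture --- assembling $Y_e$ from the four fusion-rule pieces, fixing $\cY$ of type $\binom{V}{J\,J}$ via $\dim\Hom(J\boxtimes J,V)=1$, defining $Y_e|_{J\otimes V}$ by skew-symmetry, and locating the parity in the scalar $c_{J,J}$ --- matches the paper's. But there is a genuine gap at the step you dismiss as routine: associativity is \emph{not} a direct consequence of the vertex tensor category axioms together with $J\boxtimes J\cong V$. What the logarithmic tensor product theory gives you (Theorem 9.24 of \cite{HLZ}) is that the product $Y_e(j_1,z_1)\cY(j_2,z_2)j_3$ equals an iterate $\cY^1(\cY^2(j_1,z_1-z_2)j_2,z_2)j_3$ for \emph{some} intertwining operators $\cY^1,\cY^2$; since the relevant spaces of intertwining operators are one-dimensional, these are proportional to the chosen ones, so associativity holds only up to an undetermined scalar $\lambda=F(1,1,1)\in\C^\times$. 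Proving $\lambda=1$ is the heart of the proof. The paper does this by observing that the pair $(F,\Omega)$ of associativity and skew-symmetry constants is a normalized abelian $3$-cocycle on $\Z_2$ in Carnahan's sense, so the hexagon relation forces $F(1,1,1)=\Omega(1,1)^2$, while the braiding computation identifies $\Omega(1,1)=c_{J,J}^{-1}=\pm 1$, whence $F(1,1,1)=1$. Note that this is exactly where the hypothesis $\theta_J=\pm\Id_J$ (equivalently $c_{J,J}^2=\Id$) enters: if $c_{J,J}$ were a different root of unity, the associativity constant would be $c_{J,J}^{-2}\neq 1$ and no (super)VOA structure would exist. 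Your argument never invokes $c_{J,J}^2=\Id$ in establishing the Jacobi identity, which is the telltale sign that the crucial step is missing.

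A secondary difference: you place the sign bookkeeping in commutativity via analytic continuation, whereas the paper encodes it in skew-symmetry and then derives the appropriate (super)commutativity from skew-symmetry plus associativity using \cite{H2}. Your framing is workable in principle, but only after associativity with constant equal to $1$ has been secured; as written, the proposal establishes neither the associativity nor, consequently, the Jacobi identity.
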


\begin{proof}
Structurally, the proof is similar to the proof of Theorem 4.1 from \cite{H6}.
Note the following implications of the assumptions: Since $J$ is simple,
$L(0)$ acts semisimply on $J$, moreover,
since $\theta_J=\pm\Id_J$, $J$ is graded either by $\Z$ or by
$\frac{1}{2}+\Z$.  Since $L(0)$ acts semi-simply on both $V$ and
$J$, any logarithmic intertwining operator of the type $ C\choose{A\,B}$ where
$A,B,C\in\{V,J\}$ is free of logarithms, cf.\ Remark 3.23 of
\cite{HLZ}.  Since $J$ is simple, we have the following fusion
rules:
\begin{align}
\cN_{J,V}^J=\cN_{V,J}^J=\dim(\Hom(V\boxtimes J, J))=\dim(\Hom(J, J))=1.
\end{align}
Also, by assumption, 
\begin{align}
\cN_{J,J}^{J\boxtimes J}= \dim(\Hom(J\boxtimes J, J\boxtimes J))
=\dim(\Hom(J\boxtimes J, V))
=\cN_{J,J}^V=
\dim(\Hom(V,V))=1.
\end{align}

Fix an isomorphism $j:J\boxtimes J \rightarrow V$.

Let $\cY_{\boxtimes}$ be the (non-zero) intertwining operator
corresponding to the intertwining map $\boxtimes$ of type
$J\boxtimes J \choose{J\, J}$.  Let
\begin{align*}
\cY = j\circ \cY_\boxtimes.
\end{align*}
It is clear that $\cY$ is the non-zero intertwining operator of type
${V \choose { J\, J}}$ corresponding to the $P(1)$-intertwining map
$\overline{j}\circ \boxtimes: J\otimes J \rightarrow \overline{V}$.
Since $J$ is graded either by $\Z$ or by $1/2 +\Z$, $\cY$ has
only integral powers of the formal variable.

Consider the intertwining operator $Y_e$ of type $V\oplus J \choose {V\oplus J\,\,V\oplus J}$
defined by
\begin{align*}
Y_e(v_1\oplus j_1,x)&(v_2\oplus j_2)  
= (Y(v_1,x)v_2 + \cY(j_1,x)j_2)\oplus (Y_J(v_1,x)j_2 + e^{xL(-1)}Y_J(v_2,-x)j_1)
\end{align*}
Note that $Y_e$ also has only integral powers of $x$.

First, we analyze the braiding in order to relate $c_{J,J}$ with the
skew-symmetry.  We will need this information to prove the
associativity for $Y_e$.  The braiding is characterized by (cf.\
equation (3.9) of \cite{HKL}):
\begin{align*}
\overline{\cR}_{J\boxtimes J}(j_1\boxtimes j_2)=
e^{L(-1)}\overline{\cT}_{\gamma_1^-}(j_2\boxtimes_{P(-1)}j_1),
\end{align*}
where $\gamma_1^-$ is a path in $\overline{\mathbb{H}}\backslash\{0\}$
from $-1$ to $1$, and correspondingly, $\cT_{\gamma_1^-}$ is the
parallel transport isomorphism from $J\boxtimes_{P(-1)}J$ to
$J\boxtimes J$.  Recall that $\cY_{\boxtimes}$ is the intertwining
operator corresponding to the intertwining map $\boxtimes$ of
type $J\boxtimes J \choose{J\, J}$.  Then, (cf.\ equation (3.11) of
\cite{HKL}),
\begin{align*}
\overline{\cR}_{J\boxtimes J}(j_1\boxtimes j_2) = 
e^{L(-1)}\cY_\boxtimes(j_2,e^{i\pi})j_1.
\end{align*}
Due to our assumption, we know that
$\cR_{J\boxtimes J} = c_{J,J}\text{Id}_{J\boxtimes J}$ on $J\boxtimes J$ and hence,
\begin{align}
c_{J,J}(j_1\boxtimes j_2) = c_{J,J}\cY_\boxtimes(j_1,1)j_2=
e^{L(-1)}\cY_\boxtimes(j_2,e^{i\pi})j_1=e^{L(-1)}\cY_\boxtimes(j_2,-1)j_1.
\end{align}
Composing with $\bar{j}$, (recall that we have fixed an isomorphism
$j:J\boxtimes J \rightarrow V$), we get:
\begin{align}
c_{J,J}\cY(j_1,1)j_2=e^{L(-1)}\cY(j_2,-1)j_1\label{eqn:cJJskew}
\end{align}
Note that this can also be written as
\begin{align}
c_{J,J}Y_e(j_1,1)j_2=e^{L(-1)}Y_e(j_2,-1)j_1.\label{eqn:skewforY1}
\end{align}

Now we move to the associativity of $Y_e$.  We would like to prove
that for all $u,v,w\in X$ and $|z_1|>|z_2|>|z_1-z_2|>0$,
\begin{align}
Y_e(u,z_1)Y_e(v,z_2)w = Y_e(Y_e(u,z_1-z_2)v,z_2)w\label{eqn:assoc}.
\end{align}

There are a few cases: If all $u,v,w$ are in $V$ then all
of $Y_e$ are equal to the vertex operator map $Y$.  So the equality of
right-hand sides follows from the associativity for $Y$.  Similarly,
if exactly one of $u,v,w$ is in $J$ then equality of the right-hand
sides follows from the properties of the module map $Y_J$.  If exactly
two of $u,v,w$ are in $J$ then equality follows from the properties of
the intertwining operator $\cY$, see \cite{FHL}.

The tricky part is when all $u,v,w$ are in $J$. In this case, we would like 
to prove that:
\begin{align*}
Y_e(j_1,z_1) \cY(j_2,z_2)j_3 = Y_e(\cY(j_1,z_1-z_2)j_2,z_2)j_3
\end{align*}
Using Theorem 9.24 of \cite{HLZ}, we know this statement up to a
constant: We know that there exist intertwining operators $\cY^1$ and
$\cY^2$ of types $J\boxtimes J \choose{J\, J}$ and
$V \choose {J\boxtimes J\, J}$ respectively such that
\begin{align*}
Y_e(j_1,z_1) \cY(j_2,z_2)j_3 = \cY^1(\cY^2(j_1,z_1-z_2)j_2,z_2)j_3.
\end{align*}
But now, since $J\boxtimes J \cong V$, we can get $\tilde{\cY^1}$ and
$\tilde{\cY^2}$ of types $V \choose{J\, J}$ and $V \choose {V\, J}$
respectively such that
\begin{align*}
Y_e(j_1,z_1) \cY(j_2,z_2)j_3 = 
\tilde{\cY^1}(\tilde{\cY^2}(j_1,z_1-z_2)j_2,z_2)j_3.
\end{align*}
Using the assumed fusion rules, $\tilde{\cY^2}$ is proportional to
$\cY$ and by \cite{FHL}, $\tilde{\cY^1}$ is proportional to the module
map $Y_J$. Therefore,
\begin{align*}
Y_e(j_1,z_1) \cY(j_2,z_2)j_3 = \lambda Y_e(Y_e(j_1,z_1-z_2)j_2,z_2)j_3.
\end{align*}
We must prove that the proportionality constant $\lambda=1$.

Let us also gather information about this proportionality constant in
all the 8 cases corresponding to each $u,v,w$ being in either $V$ or
$J$.  Let us temporarily grade the space $V\oplus J$ with $\Z_2$ such
that $V=(V\oplus J)^{0_t}$, $J=(V\oplus J)^{1_t}$. Here $t$ stands for
temporary.  This may or may not be the the intended $\Z_2$ grading as
in the definition of vertex operator superalgebra.  Using simplicity
of $V$ and $J$ and Theorem 11.9 of \cite{DL}, we know that arbitrary
products and iterates of $Y_e$ are non-zero.  We thus obtain a
well-defined map
$$F(g_1,g_2,g_3): \Z_2\times\Z_2 \times \Z_2 \rightarrow \C^\times,$$
which measures the failure in associativity. Our aim is to prove that
$F \equiv 1$.  Similarly, let
$$\Omega: \Z_2\times \Z_2\rightarrow \C^\times$$ 
denote the constants regarding skew-symmetry, i.e.,
$$Y_e(u,x)v=\Omega(i,j)e^{xL(-1)}Y_e(v,-x)u,$$
whenever $u \in (V\oplus J)^{i_t}$ and $v\in(V\oplus J)^{j_t}$ with $i,j\in\Z_2$.

At this point, we can proceed in two ways. One is by deriving and
using the equations satisfied by $F$ and $\Omega$ or the other way is to
proceed as in the proof of Theorem 4.1 of \cite{H6}.

For the relations satisfied by $F$ and $\Omega$, a general
derivation could be found in \cite{H2, Ch}. In our setup, one can also
refer to \cite{C}. With our setup, we have got a one-dimensional
$\cT$-commutativity datum with a normalized choice of intertwining
operators in the sense of \cite{C}. Therefore, using Lemmas 2.1.5,
2.1.7 and 2.2.7 in \cite{C}, we know that $(F, \Omega)$ is a
normalized abelian 3-cocycle on the group $\Z_2$ with coefficients in
$\C^\times$, i.e., for all $i,j,k,l\in\Z_2$ the following hold:

\begin{align}
F(0,i,j)=F(i,0,j)&=F(i,j,0)\\
\Omega(0,i)=\Omega(i,0)&=1\\
F(i,j,k)F(i,j+k,l)F(j,k,l)&=F(i+j,k,l)F(i,j,k+l)\label{eqn:pentagon}\\
F(i,j,k)^{-1}\Omega(i,j+k)F(j,k,i)^{-1}&=
\Omega(i,j)F(j,i,k)^{-1}\Omega(i,k)\label{eqn:hexagon1}\\
F(i,j,k)\Omega(i+j,k)F(k,i,j)&=\Omega(j,k)F(i,k,j)\Omega(i,k)
\label{eqn:hexagon2}
\end{align} 
Letting $i,j,k = 1$ in equation \ref{eqn:hexagon2} gives
\begin{align*}
F(1,1,1)\Omega(0,1)F(1,1,1)&=\Omega(1,1)F(1,1,1)\Omega(1,1)
\end{align*}
Hence,
\begin{align*}
F(1,1,1)=\Omega(1,1)^2.\label{eqn:FOmega}
\end{align*}
In our case, equation \ref{eqn:cJJskew} implies that $\Omega(1,1)$ is
equal to $c_{J,J}^{-1}$, thereby giving $F(1,1,1)=1$ and thereby
giving associativity of $Y_e$.  We conclude that the associativity of
$Y_e$ holds.  

An alternate and a more direct way to prove $F\equiv 1$ is as in the
proof of Theorem 4.1 of \cite{H6}.  The approach in \cite{H6} amounts
precisely to using equation \ref{eqn:hexagon2}.

Now, if $c_{J,J}=1$, we define a final $\Z_2$ grading on $X=V\oplus J$
by $X^0=X, X^1=0$  and if $c_{J,J}=-1$, $X^0=V, X^1=J$.

Recalling equation \ref{eqn:skewforY1}, we see that when $c_{J,J}=1$,
we get the skew-symmetry as in the definition of a
$\frac{1}{2}\Z$-graded vertex operator algebra and when $c_{J,J}=-1$,
we get the skew-symmetry as in the definition of a vertex operator
superalgebra. Now, we proceed exactly as in \cite{H6}.  Since $Y_e$
satisfies associativity and skew-symmetry, using results in \cite{H2},
one gets that $Y_e$ satisfies commutativity for $\frac{1}{2}\Z$-graded
vertex operator algebras when $c_{J,J}=1$ and satisfies commutativity
for vertex operator superalgebras when $c_{J,J}=-1$.  Since $Y_e$ has
only integer powers of the formal variable and hence rationality of
products and iterates holds.  The other axioms in the definition being
easy to verify, we conclude that $V\oplus J$ is a
$\frac{1}{2}\Z$-graded vertex operator algebra when $c_{J,J}=1$ and a
vertex operator superalgebra when $c_{J,J}=-1$.

\end{proof}

\begin{remark}
When $\theta_J=-1, c_{J,J}=1$, we get a vertex operator algebra with
``wrong statistics,'' that is, a genuinely $\frac{1}{2}\Z$-graded
vertex operator algebra.

When $\theta_J=1, c_{J,J}=-1$, we get a vertex operator superalgebra
with ``wrong statistics,'' that is, a $\Z$-graded vertex operator
superalgebra.
\end{remark}

\begin{remark}
Any two choices $j,j'$ of isomorphims $J\boxtimes J \cong V$ are
non-zero scalar multiples of each other since $V$ is simple. Let
$j' = \lambda j$, for some $\lambda\in\C^\times$.  
If we use $j'$ in place of $j$ in the proof
above, we get the map
\begin{align*}
Y_e'(v_1\oplus j_1,x)&(v_2\oplus j_2)  \\
= &(Y(v_1,x)v_2 + \lambda\cY(j_1,x)j_2)\oplus (Y_J(v_1,x)j_2 + e^{xL(-1)}Y_J(v_2,-x)j_1)
\end{align*}
Fix an $l\in\C^\times$ such that $l^2=\lambda$.  Define
$f:V\oplus J \rightarrow V\oplus J$ by $f(v\oplus w)=v\oplus lw$ for
$v\in V$, $w\in J$.  It is clear that $f$ is invertible and fixes the
vacuum and the conformal vector as they both belong to $V$.  Moreover,
\begin{align*}
f\left(Y_e'(v_1\oplus j_1,x)\right.&\left.(v_2\oplus j_2)\right)  \\
= &(Y(v_1,x)v_2 + \lambda\cY(j_1,x)j_2)\oplus l(Y_J(v_1,x) j_2 + e^{xL(-1)}Y_J(v_2,-x)j_1)\\
= &(Y(v_1,x)v_2 + \cY(lj_1,x)lj_2)\oplus (Y_J(v_1,x)l j_2 + e^{xL(-1)}Y_J(v_2,-x)lj_1)\\
=& Y_e(f(v_1\oplus j_1),x)(f(v_2\oplus j_2))
\end{align*}
We see that $f$ furnishes an isomorphism of the two vertex operator algebra 
structures on $V\oplus J$.
\end{remark}

\subsection{Beyond self-dual simple currents}

Combining our findings with those of Carnahan \cite{C} we now get several results on general simple current extensions of VOAs. 

\begin{thm}\label{thm:generalorderJ}
Let $J$ be a simple current.  This implies that $J\boxtimes J^{-1}\cong V$ is simple by \cite{CLR}.
Assume that $\theta_{J^k}=\pm \Id_{J^k}$, 
with $\theta_{J^{k+2}}$ having the same sign as $\theta_{J^k}$
all $k\in\Z$.  Then 
\[
V_e= \bigoplus_{j\in G}J^j
\]
 has a natural
structure of a strongly $G$-graded vertex operator superalgebra,
where $G$ is the cyclic group generated by $J$. (For the definition
of strongly graded, we refer the reader to \cite{HLZ}.) If $G$ is
finite, we get a vertex operator superalgebra.
\end{thm}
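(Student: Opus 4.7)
The plan is to reduce the general simple current extension problem to the self-dual case already established in Theorem \ref{thm:constructingVOSA}, then invoke Carnahan's machinery from \cite{C} to assemble the full extension. The hypothesis $\theta_{J^k}=\pm\Id_{J^k}$ with the parity-consistency condition is exactly what is needed to translate the problem into the framework of normalized abelian $3$-cocycles on $G$ with values in $\{\pm 1\}\subset \C^\times$, which is the setting of \cite{C}.

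The first step is to split into cases according to the order of $J$. If $J$ has odd finite order, then $G$ has no element of order two and Carnahan's construction applies directly: every $J^k\boxtimes J^{-k}\cong V$ and one obtains the intertwining operators of type $\binom{J^{i+j}}{J^i\ J^j}$ by matching fusion rules, with no self-dual obstruction to resolve. If $J$ has even finite order $2m$, then $J^m$ is self-dual, and its extension $V\oplus J^m$ is determined by Theorem \ref{thm:constructingVOSA}: either a $\frac{1}{2}\Z$-graded VOA or a vertex operator superalgebra, depending on $c_{J^m,J^m}$. This self-dual piece is precisely the input Carnahan's theorems leave open. If $J$ has infinite order, $G\cong\Z$ and there is no self-dual element, so Carnahan's construction again applies directly, yielding a strongly $\Z$-graded structure.

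Next, the parity-consistency hypothesis on $\theta_{J^k}$ ensures that the pair $(F,\Omega)$ measuring failures of associativity and skew-symmetry among the intertwining operators $\cY_{i,j}$ of type $\binom{J^{i+j}}{J^i\ J^j}$ assembles into a normalized abelian $3$-cocycle on $G$ with values in $\{\pm 1\}$. Indeed, the argument in the self-dual case in the proof of Theorem \ref{thm:constructingVOSA} shows that $F(i,i,i)=\Omega(i,i)^2$ whenever $J^i$ is self-dual, and more generally the normalized intertwining operators obtained by Carnahan's procedure (Lemmas 2.1.5, 2.1.7 and 2.2.7 of \cite{C}) produce the required cocycle structure. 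One then defines the $\Z_2$-grading on $V_e$ by declaring $J^k$ to be even or odd according to whether $\theta_{J^k}=+\Id$ or $-\Id$; parity-consistency of $\theta_{J^{k+2}}$ with $\theta_{J^k}$ guarantees that this assignment is a homomorphism $G\to\Z_2$. Associativity, skew-symmetry, commutativity, and the $L(-1)$-derivative and vacuum axioms then follow exactly as in the self-dual case, but applied to all triples in $G\times G\times G$ using Huang--Lepowsky--Zhang's associativity of $P(z)$-intertwining maps (\cite{HLZ}, Theorem 9.24).

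The main obstacle is the verification that the choices of isomorphisms $J^i\boxtimes J^j\cong J^{i+j}$ can be made compatibly so that the resulting cocycle $(F,\Omega)$ is trivializable after a coboundary change, when $G$ is infinite. For finite $G$ this is handled by Carnahan's finite-group cohomology arguments, together with Theorem \ref{thm:constructingVOSA} for the self-dual coset representative. For infinite $G\cong\Z$, the obstructions in $H^3_{\mathrm{ab}}(\Z,\C^\times)$ vanish and one obtains, by Carnahan's construction, a strongly $\Z$-graded generalized vertex operator superalgebra in the sense of \cite{HLZ}; the $\Z_2$-grading described above then singles out the even and odd parts. Rationality of products and iterates reduces to the integrality of powers of the formal variables in each $\cY_{i,j}$, which is a consequence of $\theta_{J^k}\in\{\pm 1\}$ and hence $h_{J^k}\in\frac{1}{2}\Z$.
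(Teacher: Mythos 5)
Your overall strategy --- normalized abelian $3$-cocycles $(F,\Omega)$, Carnahan's lemmas, and a reduction to the $\Z_2$ situation of Theorem \ref{thm:constructingVOSA} --- is the right one and is in the spirit of the paper's proof. However, your case division by the order of $J$ contains genuine errors. First, the infinite-order case: it is false that the obstructions in $H^3_{\mathrm{ab}}(\Z,\C^\times)$ vanish. By Eilenberg--MacLane, $H^3_{\mathrm{ab}}(A,\C^\times)$ is the group of quadratic forms $A\to\C^\times$, which for $A=\Z$ is all of $\C^\times$; concretely, infinite-order simple current extensions can genuinely be super VOAs (e.g.\ the extension of the rank-one Heisenberg VOA along an odd lattice, giving the $bc$-system), so Carnahan's construction does \emph{not} ``apply directly with no parity issue'' when $G\cong\Z$. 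Second, in the even-order case $|G|=2m$ you reduce to the self-dual element $J^m$, but the relevant quotient is $G/2G$, generated by the image of $J$ itself, not the subgroup $\langle J^m\rangle$. When $m$ is even, $J^m\in 2G$, so the hypothesis forces $\theta_{J^m}=+\Id$ and $V\oplus J^m$ is an ordinary VOA carrying no information about the parity of the full extension; and in any case knowing the structure on $V\oplus J^m$ does not assemble the multiplication on all of $\bigoplus_{j\in G}J^j$. Third, your $\Z_2$-grading ``$J^k$ is odd iff $\theta_{J^k}=-\Id$'' is not the super-grading in general: by Corollary \ref{cor:spinstatistics} the parity is governed by $c_{J,J}$ (equivalently $\bar\Omega(1,1)$), which differs from $\tilde\theta_J$ by $\qdim(J)=\pm 1$, and in the ``wrong statistics'' cases of the Remark following Theorem \ref{thm:constructingVOSA} the two disagree.

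The paper avoids all three problems by a single uniform decomposition that does not depend on the order of $J$: after normalizing the intertwining operators $\tilde\cY_{i,j}$, the hypothesis on the twists gives $\tilde\Omega(2i,2i)=1$, so by Lemmas 1.2.7 and 1.2.8 and Theorem 2.2.13(i) of \cite{C} the cocycle may be replaced by one pulled back from $G/2G$. Proposition 2.4.5 of \cite{C} then makes $\tilde V=\bigoplus_{i\in 2G}J^i$ a (strongly graded) VOA and $\tilde J=\bigoplus_{i\notin 2G}J^i$ a $\tilde V$-module with an intertwining operator of type $\binom{\tilde V}{\tilde J\,\tilde J}$; if $G=2G$ one is done, and otherwise $G/2G\cong\Z_2$ and the argument of Theorem \ref{thm:constructingVOSA} applied to the pair $(\tilde V,\tilde J)$, with the parity decided by $\bar\Omega(1,1)$, finishes the proof. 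You would need to replace your case analysis by this $2G$ versus $G\setminus 2G$ splitting (or something equivalent) for the argument to go through.
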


\begin{proof}

The assumptions on the twist combined with balancing imply that $c_{J^k,J^k}= \pm\Id$. 
By assumption, $\dim{ J^{i+j} \choose J^i\,J^j}=1$. For each $i,j$,
choose a non-zero logarithmic intertwining operator
$\tilde{\cY}_{i,j}\in{ J^{i+j} \choose J^i\,J^j}$, such that
$\tilde{\cY}_{0,j}$ is the vertex operator map. By simplicity of $J^k$s,
$L(0)$ acts semi-simply on each of the $J^k$s and hence
$\tilde{\cY}_{i,j}$ are free of logarithms by Remark 3.23 of \cite{HLZ}. Moreover, since
$\theta_{J^k}=\pm 1$, each $J^k$ is graded either by $\Z$ or by $1/2 + \Z$.
Combining with the rest of the asssumptions on $\theta_{J^k}$ we get that
each $\tilde{\cY}_{i,j}$ has only integer powers of the formal variable.

Since each $J^i$ is simple, it is be easy to see, using Theorem
11.9 of \cite{DL} that the products and iterates of
$\tilde{\cY}_{i,j}$ are non-zero. This is needed to ensure that
``$F$,'' defined below, is well-defined and non-zero.  

We know, using Proposition 3.44 of \cite{HLZ} that
$e^{xL(-1)}\tilde{\cY}_{i,j}(u,e^{\pi i}x)v =
e^{xL(-1)}\tilde{\cY}_{i,j}(u,-x)v$
is a (non-zero) intertwining operator of type
$J^{i+j}\choose {J^j\, J^i}.$ Also, since
$\dim{ J^{i+j} \choose J^j\,J^i}=1$, we know that it must be a
(non-zero) scalar multiple of $\tilde{\cY}_{j,i}$.  So, we get a
$\cT$-commutativity datum in the sense of \cite{C} along with a
normalized choice of intertwining operators $\tilde{\cY}$.  With this,
we get a normalized abelian cocycle $(\tilde{F},\tilde{\Omega})$ on
the group $G=\{J^k\,|\,k\in\Z\}$.

As before, $\tilde{\Omega}(i,j)= c_{J^i,J^j}^{-1}$, and hence,
$\tilde{\Omega}(i,i)=\pm 1$.  Using Lemmas 1.2.7 and 1.2.8 of
\cite{C}, we get that $\tilde{\Omega}(2i,2i)=1$ for $i\in G$ and that
$(\tilde{F},\tilde{\Omega})$ is cohomologous to a normalized abelian
3-cocycle $(F,\Omega)$ on $G$, pulled back from a normalized abelian
cocycle $(\bar{F},\bar{\Omega})$ for the abelian group $G/2 G$.  We
now modify our choice of $\{\tilde{\cY}_{i,j}\}$ (by multiplying each
intertwining operator with an appropriate scalar) so as to match with
$(F,\Omega)$. This is possible by Theorem 2.2.13 (i) of \cite{C}.

So, we get, using Proposition 2.4.5 of \cite{C} that
$\tilde{V}=\bigoplus_{i\in 2G}J^i$ is a $\Z$-graded vertex
operator algebra (if $|G|=\infty$, we would get a strongly $G$-graded
vertex operator algebra), that
$\tilde{J}=\bigoplus_{i \not\in 2G}J^i$ is a $\tilde{V}$-module (or a
strongly $G$-graded $\tilde{V}$-module if $|\{i \not\in 2G\}|=\infty$),
and a $\tilde{V}$-intertwining operator
$\cY\in{\tilde{V}\choose \tilde{J}\,\tilde{J}}$.
Therefore, we are done if $G=2G$.

Now assume that $G\neq 2G$. Note that since $G$ is cyclic, $G/2G\cong \Z_2$.
Let $|z_1|>|z_2|>|z_1-z_2|>0$ and
$w_{i_t}\in J^{i_t}$ for $t\in\{1,2,3\}, i_t\in G\backslash 2\cdot G$.
Note that by assumption, we have:
\begin{align*}
\cY_{i_1,i_2}(w_{i_1},z_1)w_{i_2} &= 
\Omega(i_1,i_2)e^{-z_1L(-1)}\cY_{i_2,i_1}(w_{i_2},-z_1)w_{i_1}\\
\cY_{i_1,i_2+i_3}(w_{i_1},z_1)\cY_{i_2,i_3}(w_{i_2},z_2)w_{i_3} &= 
F(i_1,i_2,i_3)
\cY_{i_1+i_2,i_3}(\cY_{i_1,i_2}(w_{i_1},z_1-z_2)w_{i_2},z_2)w_{i_3}.
\end{align*}
However, using that $(F,\Omega)$ is a pull-back of
$(\bar{F},\bar{\Omega})$ on $G/2G$ we get:
\begin{align*}
\cY(w_{i_1},z_1)w_{i_2} &= 
\bar{\Omega}(\bar{i_1},\bar{i_2})e^{-z_1L(-1)}\cY(w_{i_2},-z_1)w_{i_1}\\
\cY(w_{i_1},z_1)\cY(w_{i_2},z_2)w_{i_3} &= 
\bar{F}(\bar{i_1},\bar{i_2},\bar{i_3})
\cY(\cY(w_{i_1},z_1-z_2)w_{i_2},z_2)w_{i_3},
\end{align*}
here $\bar{i}$ denotes the image of $i$ in $\Z/2Z$.
Moreover, recall that  $\Omega(i,i)=\pm 1$ for all $i\in G$
and hence, $\bar{\Omega}(i,i)=\pm 1$ for all $i\in G/2G \cong \Z_2$.
In other words, $\bar{\Omega}(i,i)^2=1$ for all $i \in G/2G \cong \Z_2$.
From the proof of Theorem \ref{thm:constructingVOSA} it is clear that
$\bar{F}\equiv 1$.
Now we proceed as in Theorem \ref{thm:constructingVOSA} to prove that
we indeed get a vertex operator (super) algebra structure on
$\tilde{V}\oplus \tilde{J}$.  If $|G| = \infty$ then we actually get a
strongly graded vertex operator (super) algebra with abelian group $G$
(cf.\ \cite{HLZ}).
\end{proof}

\begin{thm}\label{thm:HKLanalog}
Let $V$ be a vertex operator algebra such that its module category
$\cC$ has a natural vertex tensor category structure in the sense of
Huang-Lepowsky.  Then, the following are equivalent:
\begin{itemize}
\item A vertex operator superalgebra $V_e$ such that 
$V$ is a subalgebra of  $V_e^0$.
\item A $\cC$-superalgebra $V_e$ with $\theta^2=\Id_{V_e}$.
\end{itemize}
\end{thm}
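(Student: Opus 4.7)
The plan is to mirror the proof of the main theorem of \cite{HKL}, which establishes the analogous correspondence for ordinary VOAs, while tracking the $\Z_2$-grading throughout. Recall that in \cite{HKL}, a VOA extension $V \subseteq V_e$ with $V_e$ lying in $\cC$ as a $V$-module corresponds to a commutative associative unital algebra on $V_e$ with trivial twist: the multiplication $\mu : V_e \boxtimes V_e \to V_e$ is the morphism in $\cC$ obtained from the vertex operator map $Y_{V_e}(\cdot,1)\cdot$ via the universal property of the $P(1)$-tensor product \cite{HLZ}, and the unit $\iota$ comes from the vacuum. The goal is to establish the same dictionary in the $\Z_2$-graded setting.

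For the forward direction, suppose $V_e$ is a super VOA with $V \subseteq V_e^0$. The $\Z_2$-decomposition $V_e = V_e^0 \oplus V_e^1$ is inherited from the super structure, and the morphism $\mu$ constructed as above satisfies condition (2) (respect for the $\Z_2$-grading) because $Y_{V_e}(V_e^i,x)V_e^j \subseteq V_e^{i+j}[[x,x^{-1}]]$; condition (3) (associativity) follows from associativity of $Y_{V_e}$ as in \cite{HKL}; condition (5) (unit) follows from the left-identity and creation properties of the vacuum. The supercommutativity (condition (4)) follows from super skew-symmetry of $Y_{V_e}$ combined with the HLZ characterization of $c_{A^i,A^j}$ in terms of $e^{xL(-1)}$ and parallel transport, recalled in the proof of Theorem \ref{thm:constructingVOSA}. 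Finally $\theta^2 = \Id_{V_e}$ holds because $L(0)$ acts semisimply on $V_e$ with eigenvalues in $\tfrac{1}{2}\Z$, so $e^{2\pi i L(0)}$ has eigenvalues in $\{\pm 1\}$ and hence squares to the identity.

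For the reverse direction, let $V_e$ be a $\cC$-superalgebra with $\theta^2 = \Id$, and let $\cY \in {V_e \choose V_e\, V_e}$ be the intertwining operator corresponding to $\mu$ under the universal property of $\boxtimes$. The condition $\theta^2 = \Id$ combined with $\mu(A^i \otimes A^j) \subseteq A^{i+j}$ forces $\cY$ to have only integer powers of the formal variable on each graded piece, by the standard relationship between $\theta$ and $e^{2\pi i L(0)}$. Setting $Y_e := \cY$, associativity of $\mu$ translates to associativity of $Y_e$ as in \cite{HKL}, and supercommutativity of $\mu$ translates via the braiding-$e^{xL(-1)}$ relation to super skew-symmetry of $Y_e$. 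Proceeding exactly as in the proof of Theorem \ref{thm:constructingVOSA}, the associated normalized abelian $3$-cocycle $(F,\Omega)$ on $\Z_2$ satisfies $\Omega(i,j) = (-1)^{ij}$, and the hexagon identity \eqref{eqn:hexagon2} forces $F \equiv 1$, yielding the super Jacobi identity via the results of \cite{H2, H6}. The remaining axioms (grading, vacuum, Virasoro, $L(-1)$-derivative) are inherited from $V$ together with the unit $\iota$.

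The principal obstacle is the careful bookkeeping of the sign $(-1)^{ij}$ in the reverse direction: one must verify that it propagates correctly from supercommutativity of $\mu$ through to super skew-symmetry of $Y_e$, and then combines with associativity to produce the super Jacobi identity rather than the ordinary Jacobi identity. This is essentially a $\Z_2$-graded refinement of the $3$-cocycle analysis already carried out in Theorem \ref{thm:constructingVOSA} in the case $G = \Z_2$, so no genuinely new analytic input is needed beyond what has already been assembled.
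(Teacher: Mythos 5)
Your overall strategy --- mirror the proof of \cite{HKL} while tracking the $\Z_2$-grading, use the $e^{xL(-1)}$/parallel-transport description of the braiding to convert between supercommutativity of $\mu$ and super skew-symmetry of $Y_e$, and use $\theta^2=\Id$ to get semisimplicity of $L(0)$ --- is exactly the paper's strategy, and the forward direction is essentially right (though you omit checking axiom (1) of a $\cC$-superalgebra, namely $\mu\circ(\theta\boxtimes\theta)=\theta\circ\mu$, which the paper verifies first from the integrality of powers of $Y_e$). Two points in your reverse direction need repair.

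First, the integrality of powers. You claim that $\theta^2=\Id$ together with $\mu(A^i\otimes A^j)\subseteq A^{i+j}$ forces $\cY$ to have only integer powers of the formal variable. It does not: those hypotheses only place all $L(0)$-eigenvalues in $\tfrac{1}{2}\Z$, which bounds the exponents to lie in $\tfrac{1}{2}\Z$, not $\Z$. The ingredient that actually does the work is axiom (1) of the $\cC$-superalgebra, $\mu\circ(\theta\boxtimes\theta)=\theta\circ\mu$: combined with the $L(0)$-conjugation formula this yields $Y_e(u,e^{2\pi i}1)v=Y_e(u,1)v$, and only then do the exponents land in $\Z$ (after $\theta^2=\Id$ has killed the logarithms). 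This is precisely why that axiom is part of the definition, and your proof never invokes it.

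Second, the appeal to the abelian $3$-cocycle $(F,\Omega)$ from Theorem \ref{thm:constructingVOSA} is out of place here. That machinery is needed there because the intertwining operators are chosen arbitrarily from one-dimensional spaces and associativity is only known up to an undetermined scalar $F$; the whole point is to prove $F\equiv 1$. In the present theorem associativity of $\mu$ is an \emph{axiom} of the $\cC$-superalgebra, so associativity of $Y_e$ comes for free as in \cite{HKL} and there is no constant to determine. Moreover, $F$ is not even well-defined in this generality: $V_e^0$ and $V_e^1$ need not be simple currents and the relevant spaces of intertwining operators need not be one-dimensional. The paper instead goes directly from associativity plus super skew-symmetry to super commutativity and then to the super Jacobi identity via \cite{H2}; if you delete the cocycle sentence and keep only that last step, your argument closes correctly.
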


\begin{proof}
This proof is almost the same as the proof in the \cite{HKL}.  Here,
we only give those details that are different from the ones in
\cite{HKL}.

\noindent 
(i) We first prove that a vertex superalgebra yields a $\cC$-superalgebra.

Let $V_e$ be a vertex operator superalgebra such that $V$ is a
subalgebra of $V_e^0$. We immediately get a morphism
$\iota:V\hookrightarrow V_e^0$.  Being a $V$-module, $V_e$ is an
object of $\cC$.  Even in the case that $V_e$ is a superalgebra, since
$V\subset V_e^{0}$, $Y_{V_e}$ is an intertwining operator for
$V$-modules of the type $V_e\choose{V_e\,V_e}$. By the universal
property of $V_e\boxtimes V_e$, there exists a unique module map
$\mu : V_e\boxtimes V_e \rightarrow V_e$ such that
$\overline{\mu}\circ\boxtimes = Y_e(\cdot,1)\cdot$.

We now prove that $(V_e, \mu,\iota)$ along with its $\frac{1}{2}\Z$
and $\Z_2$-gradings is a $\cC$-superalgebra.

Clearly, since $Y_e$ only has integral powers of the formal variable,
\begin{align}
\overline{\mu}(\theta u \boxtimes \theta v) 
&= Y_e(\theta u, 1)\theta v\nonumber\\
&= Y_e(e^{2i\pi L(0)} u, 1)e^{2i\pi L(0)} v\nonumber\\ 
&= e^{2i\pi L(0)}Y_e( u,e^{2i\pi}1)v\nonumber\\
&= e^{2i\pi L(0)}Y_e( u, 1) v\nonumber\\
&= \overline\theta Y_e( u, 1) v\nonumber\\
&=\overline\theta \overline\mu(u\boxtimes v)
\end{align}

For $i,j\in \{0,1\}$ and $u\in V_e^i, v\in V_e^j$,
$Y_e(u,x)v \in V_e^{i+j}((x))$, and hence, $\mu$ respects the
$\Z_2$-grading as well.

The proof of associativity of $\mu$ given in \cite{HKL} goes through
line-by-line.

We now turn to skew-symmetry and the relation of $\mu$ to braiding
$\cR$.  The braiding isomorphism $\cR$ is determined uniquely by
\begin{align*}
\overline\cR(u\boxtimes v) = e^{L(-1)}\overline{\cT}_{\gamma_1^-}(v\boxtimes_{P(-1)}u)
\end{align*}
where $u,v\in V_e$ and $\gamma_1^-$ is a clockwise path from $-1$ to
$1$ in $\overline{\mathbb{H}}\backslash \{0\}$, and $\cT_{\gamma_1^-}$
is the corresponding parallel transport isomorphism
$V_e\boxtimes_{P(-1)}V_e \rightarrow V_e\boxtimes V_e$.  Therefore,
\begin{align*}
\overline\mu(\overline\cR(u\boxtimes v))=
\overline\mu(e^{L(-1)}\cT_{\gamma_1^-}(v\boxtimes_{P(-1)}u)).
\end{align*}
Let $\cY$ be the intertwining operator of type
$V_e\boxtimes V_e \choose {V_e\,V_e}$ corresponding to the
intertwining map $\boxtimes_{P(1)}$.  Then,
\begin{align*}
e^{L(-1)}\cT_{\gamma_1^-}(v\boxtimes_{P(-1)}u)=e^{L(-1)}\cY(v,e^{i\pi})u.
\end{align*}
For $\Z_2$-homogeneous elements $u\in V_e^i$, $v\in V_e^j$, for 
$i,j\in\Z/2\Z$, $Y_e$ 
satisfies the skew-symmetry:
\begin{align}
Y_e(u,1)v = (-1)^{ij}e^{L(-1)}Y_e(v,-1)u.
\end{align}
Therefore, we have that:
\allowdisplaybreaks
\begin{align}
(-1)^{ij}\overline\mu(\overline\cR(u\boxtimes v))
&=(-1)^{ij}\overline\mu(e^{L(-1)}\cT_{\gamma_1^-}(v\boxtimes_{P(-1)}u))\nonumber\\
&=(-1)^{ij}\overline\mu(e^{L(-1)}\cY(v,e^{i\pi})u)\nonumber\\
&=(-1)^{ij}e^{L(-1)}\overline\mu(\cY(v,e^{i\pi})u)\nonumber\\
&=(-1)^{ij}e^{L(-1)}Y_e(v,e^{i\pi})u\nonumber\\
&=(-1)^{ij}e^{L(-1)}Y_e(v,-1)u\nonumber\\
&=Y_e(u,1)v\nonumber\\
&=\overline\mu(u\boxtimes v)
\end{align}

The left unit isomorphism property also goes through exactly as in
\cite{HKL}.

\noindent (ii) Now we prove that a $\cC$-superalgebra yields a vertex
operator superalgebra.

We are given a $\cC$-superalgebra $(V_e,\mu,\iota)$.  By definition,
$V_e$ is a generalized $V$-module. The $P(1)$-intertwining map
$\overline\mu\circ\boxtimes_{P(1)}: V_e\otimes V_e\rightarrow
\overline{V_e}$
corresponds to an intertwining operator $Y_e$ of type
$V_e \choose {V_e\,V_e}$ such that
$$\overline\mu(u\boxtimes v)=Y_e(u,1)v.$$
Using $\iota$, we can view vacuum vector $\mathbf{1}$ and conformal
vector $\omega$ as elements of $V_e$.

The Virasoro relations and the fact that $V_e$ is graded by
generalized eigenvalues of $L(0)$ follow from the fact that $V_e$ is a
generalized $V$-module.  $L(-1)$-derivative property and the
$L(0)$-conjugation formula for $Y_e$ follow from the fact that $Y_e$
is an intertwining operator.

Since $\theta^2=\Id_{V_e}$, $L(0)$ acts semisimply on $V_e$ and $V_e$ is
in fact $\frac{1}{2}\Z$-graded by eigenvalues of $L(0)$.  Since $L(0)$
acts semisimply, $Y_e$ does not have logarithms.  Using
$\mu(\theta \otimes \theta) = \theta \circ \mu$, the definition of
$\theta$, the $L(-1)$-derivative property of $Y_e$ and the
$L(0)$-conjugation formula for $Y_e$, we get that
\begin{align*}
\overline\mu(\theta u\boxtimes \theta v) &= Y_e(e^{2i\pi L(0)}u,1)e^{2i\pi L(0)}v
=e^{2i\pi L(0)}Y_e(u,e^{2i\pi}1)v,\\
\overline\theta\overline\mu(u\boxtimes v)&=e^{2i\pi L(0)}Y_e(u,1)v
\end{align*}
and hence, 
\begin{align*}
Y_e(u,e^{2i\pi}1)v=Y_e(u,1)v.
\end{align*}
Letting $u$ and $v$ to be homogeneous with respect to the
$L(0)$-grading, we immediately conclude that $Y_e(u,x)v$ must have
only integral powers of $x$.

Skew-symmetry with the correct factor of $-1$, vacuum property,
creation property and associativity follow exactly as in \cite{HKL}.
Using \cite{H2}, skew-symmetry and the associativity imply the right
kind of commutativity for a vertex operator superalgebra. Again using
results in \cite{H2}, commutativity and associativity yield the right
kind of rationality of products and iterates, these yield the desired
Jacobi identity.

\end{proof}

\begin{thm}\label{thm:modules}
Let $V,V_e,\cC$ be as in Theorem \ref{thm:HKLanalog}.
The category of generalized modules for the vertex operator superalgebra $V_e$
is isomorphic to the category $\rep^0 V_e$, where $V_e$ is
considered as a $\cC$-superalgebra.
\end{thm}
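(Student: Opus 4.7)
The plan is to adapt the HKL proof of the analogous statement for $\cC$-algebras (Theorem 3.4 of \cite{HKL}) by constructing mutually inverse functors between the category of generalized $V_e$-modules and $\rep^0 V_e$, with careful attention to how the $\Z_2$-grading and the locality (trivial monodromy) condition interact with the super sign conventions, exactly as in the proof of Theorem \ref{thm:HKLanalog}. Most of the argument will mirror \cite{HKL} line-by-line; the novelty is only in the parity bookkeeping.

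In one direction, given a generalized $V_e$-module $(W = W^0\oplus W^1, Y_W)$, the module map $Y_W$ is an intertwining operator of type $W\choose{V_e\,W}$ for $V$-modules, so by the universal property of $\boxtimes_{P(1)}$ it yields a unique morphism $\mu_W: V_e\boxtimes W \to W$ with $\overline\mu_W \circ \boxtimes = Y_W(\cdot,1)\cdot$. The verification that $(W,\mu_W)$ satisfies the $\Z/2\Z$-grading condition, the associativity condition, and the unit condition of $\rep V_e$ is word-for-word the argument in \cite{HKL}, using associativity of $Y_W$ with $Y_e$ and the left-identity property. The key new point is that $(W,\mu_W)$ lies in $\rep^0 V_e$: this is because $Y_W(u,x)w$ has only integer powers of $x$ for $\Z_2$-homogeneous $u,v$ (equivalently, $\theta$ acts compatibly), and this integrality is precisely encoded by the condition $\mu_W \circ c_{W,V_e}\circ c_{V_e,W} = \mu_W$ via the Huang-Lepowsky parallel transport formulas used in the proof of Theorem \ref{thm:HKLanalog}.

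In the reverse direction, given $(W,\mu_W)\in\rep^0 V_e$, the morphism $\mu_W$ corresponds via the universal property to an intertwining operator $Y_W$ of type $W\choose{V_e\,W}$ with $\overline\mu_W(u\boxtimes w) = Y_W(u,1)w$. The locality condition on $\mu_W$, combined with the monodromy-integrality dictionary via $\cT_{\gamma_1^-}$ exactly as done for the algebra $V_e$ in part (ii) of the proof of Theorem \ref{thm:HKLanalog}, forces $Y_W(u,x)w$ to have only integer powers of $x$ when $u,w$ are $\Z_2$-homogeneous, with the correct sign dependence on the parities. The $L(-1)$-derivative property, the $L(0)$-conjugation formula, and the left-identity axiom are immediate from $Y_W$ being an intertwining operator together with the unit axiom. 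Skew-symmetry of $Y_e$ with the correct super sign (already established in Theorem \ref{thm:HKLanalog}), associativity of the $V_e$-module action on $W$ (from (2) of $\rep V_e$), and the results of \cite{H2} then yield the Jacobi identity in the super form, giving a generalized $V_e$-module structure on $W$.

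Finally, the two functors are mutually inverse on objects because the intertwining operator and the $\boxtimes_{P(1)}$-morphism determine each other uniquely, and they are mutually inverse on morphisms because $V_e$-module maps and $\rep V_e$-morphisms are defined by the same intertwining condition with $Y_W$ (respectively $\mu_W$). I expect the main technical obstacle to be the careful verification that the categorical locality condition $\mu_W\circ M_{V_e,W} = \mu_W$ matches, sign for sign, the integer-power condition on $Y_W$ across the $\Z_2$-grading of $V_e$ and of $W$; however, this is exactly parallel to the algebra case already handled in Theorem \ref{thm:HKLanalog}, since the $\Z_2$-grading of $W$ enters $Y_W$ only through the parity of the formal variable exponents, which is controlled by $\theta_W$ and the balancing axiom.
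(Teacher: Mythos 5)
Your proposal follows essentially the same route as the paper: both directions go through the universal property of $\boxtimes_{P(1)}$ as in Theorem 3.4 of \cite{HKL}, with the only new content being that the locality condition $\mu_W\circ M_{V_e,W}=\mu_W$ translates, via the $e^{2\pi i}$ parallel-transport identity $Y_W(v,e^{2\pi i})w=Y_W(v,1)w$, into $Y_W$ having only integral powers of the formal variable and (a point worth stating explicitly, since these are logarithmic intertwining operators a priori) no logarithms. One small correction: the integrality forced by locality is uniform and carries no parity-dependent signs, so the ``sign for sign'' matching you anticipate as the main obstacle does not actually arise.
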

\begin{proof}
It is clear that a generalized $V_e$-module corresponds to an object in
$\rep^0 V_e$. 

Let $(W,\mu_W)\in\rep^0 V_e$. Let $Y_W$ be an intertwining operator
corresponding to the intertwining map 
$\overline{\mu_W}\circ \boxtimes: A\otimes W\rightarrow \overline{W}$,
and let $\cY_\boxtimes$ be the intertwining operator corresponding to 
the intertwining map $\boxtimes:A\otimes W\rightarrow \overline{A\boxtimes W}$.
We have that $Y_W=\mu_W\circ\cY_\boxtimes$.
Let us analyze the condition $\mu_W\circ(\cR_{W,A}\cR_{A,W}) = \mu_W$.
This says that 
$$\overline{\mu_W}\circ(\cY_\boxtimes(v,e^{2\pi i})w)=\overline{\mu_W}(\cY_\boxtimes(v,1)w),$$
for $v\in V_e, w\in W$.
Therefore,
\begin{align}
\label{eqn:YWmon}
Y_W(v,e^{2\pi i})w=Y_W(v,1)w.
\end{align}
Using the notation (3.24) of \cite{HLZ} let
$$Y_W(v,x)w=\sum\limits_{n\in\C,k \in \N}\, (v_{n;k}^{Y_W}w)\, x^n (\log x)^k,$$
so that for a complex number $\zeta$,
$$Y_W(v,e^\zeta)w=\sum\limits_{n\in\C,k \in \N}\, (v_{n;k}^{Y_W}w)\, e^{n \zeta } \zeta^k.$$
Equation \eqref{eqn:YWmon}
now gives that
$$\sum\limits_{n\in\C,k \in \N}\, (v_{n;k}^{Y_W}w)\, e^{2\pi i n} (2\pi i)^k
= \sum\limits_{n\in\C}\, v_{n;0}^{Y_W}w ,$$
which in turn immediately implies that $Y_W$ has no logarithms and only
integral powers of the formal variable.
Now, the proof of Theorem 3.4 of \cite{HKL} goes through.
\end{proof}

\begin{thm}\label{thm:liftsiff}
Consider the set-up of Theorem \ref{thm:generalorderJ}.
Let $V_e=\bigoplus_{j\in G}J^j$.
If $X\in\cC$ 
then, $\cF(X)\in\rep^0 V_e$ if and only if $M_{J,X}=\Id_{J\boxtimes X}$.
\end{thm}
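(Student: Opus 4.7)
The plan is to translate the $\rep^0$-condition into a monodromy statement and then apply Theorem \ref{thm:monodromy} together with the direct-sum decomposition $V_e=\bigoplus_{i\in G}J^i$. By definition, $\cF(X)=V_e\boxtimes X$ lies in $\rep^0 V_e$ iff $\mu_{\cF(X)}\circ M_{V_e,\cF(X)}=\mu_{\cF(X)}$, where $\mu_{\cF(X)}=(\mu\otimes\Id_X)\circ\cA_{V_e,V_e,X}$. Observe at the outset that under the hypotheses of Theorem \ref{thm:generalorderJ} one has $c_{J,J}=\pm\Id$, so $M_{J,J}=c_{J,J}^2=\Id_{J\boxtimes J}$; this is the input that lets us invoke Theorem \ref{thm:monodromy}(8).

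For the easy direction, I would assume $M_{J,X}=\Id_{J\boxtimes X}$ and apply Theorem \ref{thm:monodromy}(8) (using $M_{J,J}=\Id$) to conclude $M_{J^i,\, J^j\boxtimes X}=\Id$ for all $i,j\in G$, independently of parenthesization. Since $\boxtimes$ distributes over $\oplus$ (by Proposition 4.24 of \cite{HLZ}, as noted earlier in Section 2), this upgrades to $M_{V_e,\,V_e\boxtimes X}=\Id_{V_e\boxtimes(V_e\boxtimes X)}$, whence the local module condition is automatic.

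For the converse, I would restrict the identity $\mu_{\cF(X)}\circ M_{V_e,\cF(X)}=\mu_{\cF(X)}$ to the single summand $J\boxtimes(V\boxtimes X)$ inside $V_e\boxtimes(V_e\boxtimes X)$, using $V=J^0\subset V_e$ in the inner factor and $J=J^1\subset V_e$ in the outer factor. Naturality of braiding implies that $M_{V_e,V_e\boxtimes X}$ preserves this summand and restricts to $M_{J,\,V\boxtimes X}$. The key structural point is that on this summand the action map restricts to
\[
\bigl(\mu|_{J\boxtimes V}\otimes\Id_X\bigr)\circ\cA_{J,V,X}\colon J\boxtimes(V\boxtimes X)\longrightarrow J\boxtimes X,
\]
and $\mu|_{J\boxtimes V}\colon J\boxtimes V\to J$ is an isomorphism (it is the right-unit constraint for the $V$-module $J$, since $\iota$ is a unit for $\mu$). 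Therefore the restricted $\mu_{\cF(X)}$ is an isomorphism, and the $\rep^0$ equation forces $M_{J,V\boxtimes X}=\Id$. Naturality of monodromy applied to the left-unit isomorphism $\ell_X\colon V\boxtimes X\xrightarrow{\sim} X$ then translates this into $M_{J,X}=\Id_{J\boxtimes X}$, as desired.

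The main technical obstacle is the restriction step in the converse: one must check carefully that both the braiding-induced monodromy $M_{V_e,V_e\boxtimes X}$ and the action $\mu_{\cF(X)}$ are compatible with the direct-sum decomposition of $V_e$ on both tensor slots, and that the resulting restriction of $\mu_{\cF(X)}$ to $J\boxtimes(V\boxtimes X)$ is genuinely an isomorphism onto $J\boxtimes X$ (rather than merely a projection to a subobject). Once these bookkeeping points are secured, both implications reduce to direct applications of the monodromy machinery already developed in Section~2.
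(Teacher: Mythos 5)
Your proof is correct and follows essentially the same route as the paper: the ``if'' direction via $M_{J,J}=\Id$, Theorem \ref{thm:monodromy} and distributivity of $\boxtimes$ over $\oplus$, and the ``only if'' direction by restricting the local-module equation to the summand $J\boxtimes(J^0\boxtimes X)$ and using invertibility of the restricted action map. The only cosmetic difference is that the paper deduces invertibility of $\mu|_{J\boxtimes J^0}$ from its non-vanishing together with simplicity of $J\boxtimes V\cong J$, whereas you appeal to the unit axiom (which strictly requires the commutativity axiom to pass from the left-unit to the right-unit constraint); both justifications are valid.
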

\begin{proof}
We assume that $X\neq 0$.

Observe that $V_e\boxtimes X \cong \bigoplus_j J^j\boxtimes X$ by
Proposition 4.24 of \cite{HLZ}.

Let us first do the only if part. 
Since $\cF(X)\in\rep^0 V_e$, 
$$
\mu_{\cF(X)}|_{J\boxtimes (J^0\boxtimes X)}\circ 
M_{J,J^0\boxtimes X}=\mu_{\cF(X)}|_{J\boxtimes (J^0\boxtimes X)}.
$$
However, by definition of $\mu_{\cF(X)}$,
$\mu_{\cF(X)}|_{J\boxtimes(J^0\boxtimes X)}=(\mu_{J\boxtimes
J^0}\otimes \Id_X)\circ\cA_{J,J^0,X}$.
Now, $J\boxtimes J^0 = J\boxtimes V$ is spanned by homogeneous weight
components of $j\boxtimes v$ as $j$ runs over $J$ and $v$ runs over
$V$ (cf.\ Proposition 4.23 of \cite{HLZ}).  By definition,
$\overline{\mu}(j\boxtimes v) = e^{L(-1)}Y_J(v,-1)j$, where $Y_J$ is
the module map corresponding to $J$.  Hence, $\mu|_{J^1\boxtimes J^0}$
is non-zero. Since $J$ is simple, $\mu|_{J^1\boxtimes J^0}$ is a
morphism of simple modules, and hence, being non-zero, is invertible.
Therefore, $\mu_{\cF(X)}|_{J\boxtimes(J^0\boxtimes X)}$ is invertible also.
We conclude that $M_{J,J^0\boxtimes X}$ must be identity.

Conversely, by assumption, $c_{J,J}=\pm\Id$ and hence $M_{J,J}=\Id$.
Moreover, if $M_{J,X}=\Id$ then by Theorem \ref{thm:monodromy},
$M_{J^i,J^j\boxtimes X}=\Id$ and hence $\cF(X)\in\rep^0 V_e$.
\end{proof}

\begin{cor}\label{cor:liftingsimple}
If $X$ is a simple $V$-module, then $\cF(X)$ is a $V_e$-module iff 
$h_{J\boxtimes X}-h_{J}-h_{X}\in\Z$, where $h_{\bullet}$ denotes
the conformal dimension.
\end{cor}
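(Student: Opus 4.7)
The plan is to chain together the two preceding theorems and then reduce the monodromy identity to an explicit scalar computation via the balancing axiom. By Theorem \ref{thm:modules}, asking that $\cF(X)$ is a generalized $V_e$-module amounts to asking that $\cF(X)$ lies in $\rep^0 V_e$; by Theorem \ref{thm:liftsiff}, this is equivalent to $M_{J,X}=\Id_{J\boxtimes X}$. So I only need to show that, for simple $X$, this identity is equivalent to $h_{J\boxtimes X}-h_J-h_X\in\Z$.

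The key tool is the balancing axiom of a ribbon category, which reads
\begin{equation*}
\theta_{A\boxtimes B}=(\theta_A\boxtimes\theta_B)\circ c_{B,A}\circ c_{A,B}=(\theta_A\boxtimes\theta_B)\circ M_{A,B}.
\end{equation*}
Specializing to $A=J$, $B=X$ and rearranging gives
\begin{equation*}
M_{J,X}=(\theta_J\boxtimes\theta_X)^{-1}\circ\theta_{J\boxtimes X}.
\end{equation*}
The next step is to observe that all three twists appearing here act as scalars. Since $J$ is a simple current (hence invertible) and $X$ is simple, the functor $J\boxtimes(-)$ is an autoequivalence and therefore sends simples to simples, so $J\boxtimes X$ is again a simple $V$-module. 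On any simple module $L(0)$ acts as a scalar equal to its conformal dimension, and hence the twist $\theta=e^{2\pi i L(0)}$ acts as the scalar $e^{2\pi i h}$. Thus $\theta_J=e^{2\pi i h_J}\Id_J$, $\theta_X=e^{2\pi i h_X}\Id_X$, and $\theta_{J\boxtimes X}=e^{2\pi i h_{J\boxtimes X}}\Id_{J\boxtimes X}$.

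Substituting into the formula for $M_{J,X}$, I get
\begin{equation*}
M_{J,X}=e^{2\pi i(h_{J\boxtimes X}-h_J-h_X)}\Id_{J\boxtimes X},
\end{equation*}
which is the identity morphism iff $h_{J\boxtimes X}-h_J-h_X\in\Z$. Combined with the reduction of the first paragraph, this establishes the corollary.

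The only delicate point, and the main obstacle, is the claim that $L(0)$ acts as a scalar on $X$ and on $J\boxtimes X$. For this I rely on the convention that ``simple $V$-module'' here refers to a simple module in the usual sense, on which $L(0)$ acts semisimply, so that its generalized eigenvalue is an honest eigenvalue; then the simple-current property of $J$ propagates this scalar action to $J\boxtimes X$. In the generalized/indecomposable case, $L(0)$ can carry a nontrivial nilpotent part and the argument must be refined — this is precisely the extra care needed for the more general lifting theorems stated earlier in the introduction (assumptions on bounded Jordan block sizes), and is handled separately there.
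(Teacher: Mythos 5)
Your argument is essentially the paper's own proof: reduce via Theorem \ref{thm:liftsiff} to the condition $M_{J,X}=\Id_{J\boxtimes X}$, then evaluate the monodromy as the scalar $e^{2\pi i(h_{J\boxtimes X}-h_J-h_X)}$ using the balancing relation $M_{J,X}=\theta_{J\boxtimes X}\circ(\theta_J^{-1}\boxtimes\theta_X^{-1})$ and the simplicity of $J$, $X$ and $J\boxtimes X$. One small correction: on a simple $V$-module $L(0)$ does not act as a single scalar (its eigenvalues are $h, h+1, h+2, \dots$); what you actually need, and what holds, is that $L(0)$ acts semisimply with all eigenvalues in the single coset $h+\Z$, so that $\theta=e^{2\pi i L(0)}$ acts as the scalar $e^{2\pi i h}$.
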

\begin{proof}
Recall that $M_{J,X}=\theta_{J\boxtimes X}\circ(\theta_J^{-1}\boxtimes\theta^{-1}_X)$
and that $\theta=e^{2\pi iL(0)}$. Since $J$, $X$ and $J\boxtimes X$
are simple, $L(0)$ acts semisimply on $J$, $X$  and $J\boxtimes X$.
Hence, $M_{J,X}=e^{2\pi i (h_{J\boxtimes X}-h_J-h_X)}\Id_{J\boxtimes X}$.
Now use Theorem \ref{thm:liftsiff}.

\end{proof}

Now we focus on the case when $P$ is an indecomposable object.
In what follows, the point is to prove that under certain conditions,
the (locally) nilpotent part of the monodromy $M_{J,P}$ vanishes.
\begin{lemma}\label{lem:MThetaIndecomp}
Let $J$ be a finite order simple current
as in Theorem \ref{thm:generalorderJ}.  Let $P$ be an
indecomposable object with $\dim(\Hom(P,P))<\infty$ and
$\dim(\Hom(J\boxtimes P, J\boxtimes P))<\infty$.  Assume  that
$L(0)$ has Jordan blocks of bounded size on both $P$ and
$J\boxtimes P$.  Then,
$M_{J,P}=(\theta_{J\boxtimes P})_{ss}\circ(\theta^{-1}_J\boxtimes
(\theta^{-1}_P)_{ss})$,
where $\bullet_{ss}$ denotes the semi-simple part.
In particular, $M_{J,P}=\lambda\Id_{J\boxtimes P}$ for some 
$\lambda\in\C$. Moreover, $\lambda^N=1$, where $N$ is the order
of $J$.
\end{lemma}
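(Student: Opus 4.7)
The plan is to argue in three stages: first establish that $M_{J,P}=\lambda\,\Id_{J\boxtimes P}+\pi$ for some $\lambda\in\C$ and nilpotent $\pi$; then invoke Lemma \ref{lem:JPmon} to force $\pi=0$ and $\lambda^N=1$; finally read off $\lambda$ by taking the semisimple part of the balancing formula $M_{J,P}=\theta_{J\boxtimes P}\circ(\theta_J^{-1}\boxtimes\theta_P^{-1})$.

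For the first stage, the invertibility of $J$ makes $J\boxtimes(-)$ an auto-equivalence, so $J\boxtimes P$ is indecomposable. Combined with $\dim\Hom(J\boxtimes P,J\boxtimes P)<\infty$ and the algebraic closedness of $\C$, Fitting's lemma together with Krull--Schmidt gives that $\text{End}(J\boxtimes P)$ is a finite-dimensional local $\C$-algebra with residue field $\C$; every element is uniquely of the form $\lambda\,\Id+\pi$ with $\pi$ in the nilpotent Jacobson radical. Applied to $M_{J,P}$ and combined with Lemma \ref{lem:JPmon}, this yields $M_{J,P}=\lambda\,\Id_{J\boxtimes P}$ with $\lambda^N=1$.

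For the identification of $\lambda$, naturality of the twist applied to any endomorphism of $J\boxtimes P$ makes $\theta_{J\boxtimes P}$ central in $\text{End}(J\boxtimes P)$, and in particular makes it commute with $\theta_J^{-1}\boxtimes\theta_P^{-1}$. For two commuting endomorphisms $A,B$ with Jordan--Chevalley decompositions, all four pieces $A_{ss}$, $A_n$, $B_{ss}$, $B_n$ commute and $(AB)_{ss}=A_{ss}B_{ss}$; applied to $\lambda\,\Id=\theta_{J\boxtimes P}\circ(\theta_J^{-1}\boxtimes\theta_P^{-1})$ this yields
\begin{align*}
\lambda\,\Id=(\theta_{J\boxtimes P})_{ss}\circ(\theta_J^{-1}\boxtimes\theta_P^{-1})_{ss}.
\end{align*}
Because $\theta_J=\widetilde{\theta}_J\,\Id_J$ is already scalar, the remaining task is to verify the identity $(\Id_J\boxtimes\theta_P^{-1})_{ss}=\Id_J\boxtimes(\theta_P^{-1})_{ss}$, which then gives exactly the stated formula.

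I expect the main obstacle to be precisely this last identification of the Jordan decomposition under the tensor product with $J$. Semisimplicity of $\Id_J\boxtimes(\theta_P^{-1})_{ss}$ follows by transporting the generalized eigenspace decomposition $P=\bigoplus_\mu P_\mu$ of $\theta_P^{-1}$ through the additive, exact functor $J\boxtimes(-)$ (Proposition 4.24 of \cite{HLZ}), which produces the corresponding generalized eigenspace decomposition of $\Id_J\boxtimes\theta_P^{-1}$. Global rather than merely local nilpotence of the complement $\Id_J\boxtimes\bigl(\theta_P^{-1}-(\theta_P^{-1})_{ss}\bigr)$ is where the bounded-Jordan-block hypothesis on $P$ is essential: a uniform bound on the nilpotency index of $\theta_P-(\theta_P)_{ss}$ transfers through $\Id_J\boxtimes(-)$ to a uniform bound on $J\boxtimes P$. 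The parallel hypothesis on $J\boxtimes P$ is used identically to ensure that $\theta_{J\boxtimes P}$ itself has a genuine Jordan--Chevalley decomposition inside $\text{End}(J\boxtimes P)$, so that the multiplicative identity $(AB)_{ss}=A_{ss}B_{ss}$ can be legitimately applied.
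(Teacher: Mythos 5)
Your proof is correct, but it reaches the conclusion by a genuinely different route from the paper. The paper first expands the balancing formula $M_{J,P}=\theta_{J\boxtimes P}\circ(\theta_J^{-1}\boxtimes\theta_P^{-1})$ by writing each twist as $\theta_{ss}+\theta_{nil}$, proves by an explicit multinomial expansion (using naturality of $\theta_{ss}$ and $\theta_{nil}$ and the bounded-Jordan-block hypothesis) that the sum of the three cross terms is nilpotent, and identifies the remaining term as a scalar via the observation that the generalized $L(0)$-eigenvalues of an indecomposable module lie in a single coset $\mu+\Z$; only then does it invoke Lemma \ref{lem:JPmon}. You instead get the form $M_{J,P}=\lambda\Id+\pi$ with $\pi$ nilpotent for free from the fact that $\mathrm{End}(J\boxtimes P)$ is a finite-dimensional algebra over $\C$ with no nontrivial idempotents (idempotents split since $\cC$ is abelian and $J\boxtimes P$ is indecomposable), hence local with nilpotent radical and residue field $\C$ --- note that it is this fact, rather than Krull--Schmidt, that you actually need, and it applies to \emph{every} endomorphism of $J\boxtimes P$, not just the monodromy. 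You then apply Lemma \ref{lem:JPmon} and only afterwards identify $\lambda$ via multiplicativity of the semisimple part of the Jordan--Chevalley decomposition for commuting operators, correctly isolating the one nontrivial verification, namely $(\Id_J\boxtimes\theta_P^{-1})_{ss}=\Id_J\boxtimes(\theta_P^{-1})_{ss}$, and correctly locating where the bounded-Jordan-block hypotheses enter. Your route buys a cleaner logical separation: the scalar-plus-nilpotent structure and the conclusions $\pi=0$, $\lambda^N=1$ need only the finite-dimensionality of the endomorphism rings, while the bounded-Jordan-block hypothesis is used solely to make sense of, and establish, the displayed formula $M_{J,P}=(\theta_{J\boxtimes P})_{ss}\circ(\theta_J^{-1}\boxtimes(\theta_P^{-1})_{ss})$; the paper's route is more computational but entirely self-contained and does not appeal to the structure theory of finite-dimensional local algebras.
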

\begin{proof}
Note the very important property of simple currents from a
forthcoming article \cite{CLR} that $\bullet\boxtimes J$ and
$J\boxtimes \bullet$ are exact for any invertible simple current
$J$. Moreover, it is easy to see that these two functors take
non-zero objects to non-zero objects.  Using these and the fact that
$P$ is indecomposable, one can prove that $J\boxtimes P$ is
indecomposable.  Indeed, if we have a split short exact sequence
\begin{align*}
\xymatrix{
0  \ar[r] & A \ar[r] & J\boxtimes P \ar[r] & B \ar@{.>}@/^1pc/[l] \ar[r]& 0}
\end{align*}
we get a split short exact sequence 
\begin{align*}
\xymatrix{
0  \ar[r] & J^{-1}\boxtimes A \ar[r] & J^{-1}\boxtimes(J\boxtimes P) \ar[r] & 
J^{-1}\boxtimes B \ar@{.>}@/^1.3pc/[l] \ar[r]& 0.
}
\end{align*}
But, since $J^{-1}\boxtimes(J\boxtimes P)$ and $P$ are isomorphic via
associativity and the property of unit,
$J^{-1}\boxtimes(J\boxtimes P)$ is indecomposable as well.

Define $\theta_{ss}$ to be the semi-simple part of $\theta$ and
$\theta_{nil}$ to be $\theta-\theta_{ss}$.  Since twist given by
$\theta = e^{2\pi i L(0)}$ and since $L(0)$ has Jordan blocks of
bounded size on both $P$ and $J\boxtimes P$, we indeed have that some
finite positive power of $\theta_{nil}$ is $0$.  Note that since $J$
is simple, $\theta$ acts semi-simply on $J$.  We have that
\begin{align}
M_{J,P}&=\theta_{J\boxtimes P}\circ(\theta^{-1}_J\boxtimes\theta^{-1}_P)
\nonumber\\
&=(\theta_{J\boxtimes P})_{ss}\circ(\theta^{-1}_J\boxtimes(\theta^{-1}_P)_{ss})
+(\theta_{J\boxtimes P})_{ss}\circ(\theta^{-1}_J\boxtimes(\theta^{-1}_P)_{nil})
\nonumber \\& \quad 
+(\theta_{J\boxtimes P})_{nil}\circ(\theta^{-1}_J\boxtimes(\theta^{-1}_P)_{ss})
+(\theta_{J\boxtimes P})_{nil}\circ(\theta^{-1}_J\boxtimes(\theta^{-1}_P)_{nil}).
\label{eqn:monodromyssnil}
\end{align}
Let, 
\begin{align*}
(M_{J,P})_{ss}
&=(\theta_{J\boxtimes P})_{ss}\circ(\theta^{-1}_J\boxtimes(\theta^{-1}_P)_{ss}),
\end{align*}
and observe that $(M_{J,P})_{ss}$ is indeed semi-simple.

Let $(M_{J,P})_{nil}=M_{J,P} - (M_{J,P})_{ss}$. We now prove that
$(M_{J,P})_{nil}$ is indeed nilpotent.  By definition, morphisms in
our category commute with $L(0)$, and hence commute with $L(0)_{ss}$
and $L(0)_{nil}$. Therefore, $\theta_{ss}$ and $\theta_{nil}$ are
natural. Hence,
\begin{align*}
(&(M_{J,P})_{nil})^K   \\
&=\sum\limits_{\substack{a+b+c = K\\a,b,c\in\N}}C_{a,b,c} 
((\theta_{J\boxtimes P})_{ss}(\theta^{-1}_J\boxtimes(\theta^{-1}_P)_{nil}))^a
((\theta_{J\boxtimes P})_{nil}(\theta^{-1}_J\boxtimes(\theta^{-1}_P)_{ss}))^b
((\theta_{J\boxtimes P})_{nil}(\theta^{-1}_J\boxtimes(\theta^{-1}_P)_{nil}))^c
\end{align*}
for some constants $C_{a,b,c}.$ However, since Jordan blocks of
$L(0)$ on $P$ and $J\boxtimes P$ are bounded in size, one can now pick
a large enough $K$ for which $((M_{J,P})_{nil})^K=0$.

It is easily seen that for any indecomposable module $X$, all the
generalized eigenvalues of $L(0)$ on $X$ belong to a single coset
$\mu + \Z$ where $\mu\in\C$, and hence, $(\theta_{J\boxtimes P})_{ss}$
and $(\theta_P^{-1})_{ss}$ are scalar multiplies of identity.  Hence,
we deduce that $(M_{J,P})_{ss}=\lambda\Id_{J\boxtimes P}$ for some
scalar $\lambda\in\C$.

Combining everything, we get that
$M_{J,P}=\lambda \Id_{J\boxtimes P}+ \pi$ where $\pi$ is some
nilpotent endomorphism of $J\boxtimes P$ and $\lambda\in\C$. Now use
Lemma \ref{lem:JPmon}.
\end{proof}

Combining with Theorem \ref{thm:liftsiff}, we arrive at the following
criterion for lifting indecomposable objects.

\begin{thm}\label{cor:liftingprojective}
Let $J$ be a finite order simple current as in Theorem \ref{thm:generalorderJ}.
Let $P$ be an indecomposable object with finite dimensional endomorphism ring, 
$\dim(\Hom(P,P))<\infty$ and also
$\dim(\Hom(J\boxtimes P, J\boxtimes P))<\infty$.  Assume also that
$L(0)$ has Jordan blocks of bounded size on both $P$ and
$J\boxtimes P$.  Then, $\cF(P)$ is a generalized $V_e$-module iff
$h_{J\boxtimes P}-h_{J}-h_{P}\in\Z$.
\end{thm}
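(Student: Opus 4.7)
The plan is essentially to chain together Theorem \ref{thm:liftsiff}, Theorem \ref{thm:modules}, and Lemma \ref{lem:MThetaIndecomp}, and then just unpack what $\lambda$ is in terms of conformal dimensions.

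First, by Theorem \ref{thm:modules}, the category of generalized $V_e$-modules is equivalent to $\rep^0 V_e$, so it suffices to prove that $\cF(P) \in \rep^0 V_e$ iff $h_{J\boxtimes P}-h_J-h_P\in\Z$. By Theorem \ref{thm:liftsiff}, $\cF(P)\in\rep^0 V_e$ is equivalent to $M_{J,P}=\Id_{J\boxtimes P}$. (The hypotheses of Theorem \ref{thm:liftsiff} are inherited from the hypotheses of Theorem \ref{thm:generalorderJ} that are in force.)

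Next, I would invoke Lemma \ref{lem:MThetaIndecomp}: under the assumed finiteness of $\Hom$-spaces and the bounded Jordan block hypothesis, the monodromy on the indecomposable $P$ satisfies
\[
M_{J,P}=(\theta_{J\boxtimes P})_{ss}\circ\bigl(\theta_J^{-1}\boxtimes (\theta_P^{-1})_{ss}\bigr)=\lambda\,\Id_{J\boxtimes P}
\]
for some $\lambda\in\C$. So the condition $M_{J,P}=\Id$ collapses to $\lambda=1$.

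It remains to identify $\lambda$ with $e^{2\pi i(h_{J\boxtimes P}-h_J-h_P)}$. Since $P$ is indecomposable, all generalized $L(0)$-eigenvalues on $P$ lie in a single coset of $\Z$, and $(\theta_P^{-1})_{ss}$ acts as the scalar $e^{-2\pi i h_P}\Id_P$, where $h_P$ is any representative of that coset (and similarly for $J\boxtimes P$, noting that $J\boxtimes P$ is indecomposable, as was shown inside the proof of Lemma \ref{lem:MThetaIndecomp} using exactness of $J\boxtimes \bullet$). Since $J$ is simple, $\theta_J^{-1}=e^{-2\pi i h_J}\Id_J$ acts as a genuine scalar. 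Therefore
\[
\lambda=e^{2\pi i\bigl(h_{J\boxtimes P}-h_J-h_P\bigr)},
\]
and $\lambda=1$ is precisely the statement $h_{J\boxtimes P}-h_J-h_P\in\Z$.

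All the heavy lifting has already been done in Lemma \ref{lem:MThetaIndecomp} (and, behind it, in Lemma \ref{lem:JPmon}): the main obstacle to the result is proving that the unipotent part of $M_{J,P}$ vanishes, which is exactly what those lemmata accomplish using the bounded Jordan block hypothesis and the fact that some power of $J$ is the unit. So at the level of this corollary there is no further obstruction, and the proof is just a short assembly of the preceding results together with the scalar identification above.
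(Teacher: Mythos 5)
Your proof is correct and follows exactly the route the paper intends: the paper gives no separate argument for this theorem beyond the phrase ``Combining with Theorem \ref{thm:liftsiff}, we arrive at the following criterion,'' relying on Lemma \ref{lem:MThetaIndecomp} for the vanishing of the nilpotent part of the monodromy and on the same scalar identification via the single $\Z$-coset of generalized $L(0)$-eigenvalues on an indecomposable module. Your assembly of Theorems \ref{thm:modules} and \ref{thm:liftsiff} with Lemma \ref{lem:MThetaIndecomp}, and the computation of $\lambda$, is precisely what the paper leaves implicit.
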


Now we give some useful criteria for the cases when $J$ does not
necessarily have finite order.

\begin{lemma}\label{lem:infordermonodromy}
Let $J$ be a simple current. Let $A_i$ and $Q$ be objects in $\cC$ such that 
$M_{J,A_i}=\lambda_i\Id_{A_i}$ and $M_{J,Q}=\lambda\Id_{J,Q}$ for some scalars $\lambda_i$ and
$\lambda$. Then the following hold.

\begin{enumerate}
\item $M_{J,\boxtimes_{i=1}^NA_i}=\Big(\prod\limits_{i=1}^N\lambda_i\Big)\Id_{\boxtimes_{i=1}^N A_i}$.
\item If $0\rightarrow A \rightarrow Q\rightarrow B\rightarrow 0$ is a short
exact sequence of generalized $V$-modules then $M_{J,A}=\lambda\Id_{J\boxtimes A}$ and $M_{J,B}=\lambda\Id_{J\boxtimes B}$.
\item If $P$ is a subquotient of $Q$ then
  $M_{J,P}=\lambda\Id_{J\boxtimes P}$.
\end{enumerate}
\end{lemma}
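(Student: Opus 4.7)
The plan is to bootstrap: part (2) from naturality of monodromy together with exactness of $J\boxtimes -$, part (3) from part (2) applied twice, and part (1) from an explicit hexagon calculation that generalizes Theorem \ref{thm:monodromy}(1).

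For part (1), I would induct on $N$, with $N=1$ trivial. The crux is $N=2$. I would re-examine the proof of \eqref{eqn:monA-BC} but drop the hypothesis $M_{J,A_2}=\Id$: applying the hexagon identity to both $c_{A_1\boxtimes A_2, J}$ and $c_{J, A_1\boxtimes A_2}$ and telescoping the interior $\cA^{-1}_{A_1,A_2,J}\circ\cA_{A_1,A_2,J}$ yields
\begin{align*}
M_{J,A_1\boxtimes A_2} &= \cA^{-1}_{J,A_1,A_2}\circ(c_{A_1,J}\otimes\Id_{A_2})\circ\cA_{A_1,J,A_2}\\
&\quad\circ(\Id_{A_1}\otimes M_{J,A_2})\circ\cA^{-1}_{A_1,J,A_2}\circ(c_{J,A_1}\otimes\Id_{A_2})\circ\cA_{J,A_1,A_2}.
\end{align*}
Substituting $M_{J,A_2}=\lambda_2\Id$ allows pulling the scalar out past the associators by bilinearity of $\otimes$ and $\circ$, collapsing the composition to $\lambda_2\cdot\cA^{-1}_{J,A_1,A_2}\circ(M_{J,A_1}\otimes\Id_{A_2})\circ\cA_{J,A_1,A_2}=\lambda_1\lambda_2\Id$. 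The inductive step invokes Lemma \ref{lem:MisNatural} to rebracket $\boxtimes_{i=1}^N A_i$ as $(\boxtimes_{i=1}^{N-1}A_i)\boxtimes A_N$ and applies the $N=2$ case with scalars $\prod_{i<N}\lambda_i$ and $\lambda_N$.

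For part (2), I would invoke exactness of $J\boxtimes -$ (cited from the forthcoming \cite{CLR} in the previous lemma) to obtain $0\to J\boxtimes A\xrightarrow{\Id_J\boxtimes i}J\boxtimes Q\xrightarrow{\Id_J\boxtimes p}J\boxtimes B\to 0$. Naturality of monodromy (Lemma \ref{lem:MisNatural}) applied to $i$ gives $(\Id_J\boxtimes i)\circ M_{J,A}=M_{J,Q}\circ(\Id_J\boxtimes i)=\lambda\cdot(\Id_J\boxtimes i)$; monicness of $\Id_J\boxtimes i$ cancels it on the left and forces $M_{J,A}=\lambda\Id$. Dually, $M_{J,B}\circ(\Id_J\boxtimes p)=\lambda\cdot(\Id_J\boxtimes p)$ combined with epicness of $\Id_J\boxtimes p$ yields $M_{J,B}=\lambda\Id$. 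Part (3) is then immediate: any subquotient $P$ of $Q$ arises as $Q'/Q''$ with $Q''\subset Q'\subset Q$, and two successive applications of part (2) -- first to $0\to Q'\to Q\to Q/Q'\to 0$, then to $0\to Q''\to Q'\to P\to 0$ -- transport the scalar $\lambda$ from $Q$ down to $P$.

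The main obstacle is the associator book-keeping in part (1): the previously derived formulas \eqref{eqn:monA-BC} and \eqref{eqn:monAB-C} of Theorem \ref{thm:monodromy} are stated only under $M_{J,A_2}=\Id$, so the hexagon calculation must be redone with a scalar endomorphism in the middle, checking that it genuinely pulls cleanly past the associators. The other two parts are essentially formal once the exactness of $J\boxtimes -$ and the naturality of monodromy are granted.
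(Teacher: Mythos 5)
Your proposal is correct and follows essentially the same route as the paper: part (1) by retracing the hexagon computation behind \eqref{eqn:monA-BC} with a scalar in place of the identity and inducting, part (2) from exactness of $J\boxtimes\bullet$ plus naturality of monodromy applied to the short exact sequence, and part (3) from part (2). You merely spell out details the paper leaves implicit (the explicit monic/epic cancellation and the two-step subquotient reduction), which is fine.
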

\begin{proof}
(1) follows by retracing the proof of equation \eqref{eqn:monA-BC}
in Theorem \ref{thm:monodromy} and then using induction on $i$.
For (2), by exactness of $J\boxtimes \bullet$ from \cite{CLR}, and by naturality
of monodromy (Lemma \ref{lem:MisNatural}), we get the following commutative diagram.
\begin{align*}
\xymatrix{
0 \ar[r] & J\boxtimes A \ar[r] \ar[d]_{M_{J,A}} & J\boxtimes Q \ar[r] \ar[d]_{M_{J,Q}}
& J\boxtimes B \ar[d]_{M_{J,B}}\ar[r] & 0 \\
0 \ar[r] & J\boxtimes A \ar[r]  & J\boxtimes Q \ar[r] 
& J\boxtimes B \ar[r] & 0.
}
\end{align*}
With this, (2) follows.
Now, (3) follows from (2).
\end{proof}

Since $M=\theta\circ(\theta^{-1}\boxtimes\theta^{-1})$, we see that
Lemma \ref{lem:infordermonodromy} holds if $A_i$ are simple $V$-modules.
We immediately get the following Theorem.
\begin{thm}\label{cor:inforderJlifting} Let $J$ be simple current as
in Theorem \ref{thm:generalorderJ}.  $J$ need not necessarily have
finite order. Let $P$ be a subquotient of $\boxtimes_{i=1}^N A_i$ 
for some simple $V$-modules $A_i$.
Then, $\cF(P)$ is a
generalized $V_e$-module iff $h_{J\boxtimes P}-h_J-h_P\in\Z$.
\end{thm}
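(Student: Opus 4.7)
The plan is to reduce everything to the scalarness of the monodromy and then invoke Theorem \ref{thm:liftsiff}, leveraging the hypothesis that $P$ sits inside a tensor product of simples.

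First I would observe that because $J$ is a simple current, the exact auto-equivalence $J \boxtimes \bullet$ (with quasi-inverse $J^{-1}\boxtimes\bullet$, cf.\ \cite{CLR}) sends simple objects to simple objects, so each $J\boxtimes A_i$ is again simple. Since $L(0)$ acts as a scalar on a simple module, the twists $\theta_{A_i}$, $\theta_{J\boxtimes A_i}$ and $\theta_J$ are all scalar multiples of the identity. Using the identity
\begin{align*}
M_{J,A_i}=\theta_{J\boxtimes A_i}\circ(\theta_J^{-1}\boxtimes\theta_{A_i}^{-1}),
\end{align*}
this forces $M_{J,A_i}=\lambda_i\,\Id_{J\boxtimes A_i}$ for the scalar $\lambda_i=e^{2\pi i(h_{J\boxtimes A_i}-h_J-h_{A_i})}$.

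Next I would chain together the two parts of Lemma \ref{lem:infordermonodromy}: part (1) propagates the scalar monodromy through the tensor product, giving $M_{J,\,\boxtimes_{i=1}^N A_i}=\bigl(\prod_i\lambda_i\bigr)\Id$, and then part (3) shows that for any subquotient $P$ of $\boxtimes_{i=1}^N A_i$ the monodromy is the same scalar: $M_{J,P}=\lambda\,\Id_{J\boxtimes P}$ with $\lambda=\prod_i\lambda_i$. By Theorem \ref{thm:liftsiff}, $\cF(P)\in\rep^0 V_e$ if and only if this scalar equals $1$.

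Finally I would identify $\lambda$ with $e^{2\pi i(h_{J\boxtimes P}-h_J-h_P)}$ so that $\lambda=1$ is exactly the congruence $h_{J\boxtimes P}-h_J-h_P\in\Z$. Since $M_{J,P}$ has already been shown to be a scalar, picking any generalized $L(0)$-eigenvalue $h_P$ on $P$ and the corresponding generalized eigenvalue $h_{J\boxtimes P}$ on $J\boxtimes P$ gives the same difference mod $\Z$; equivalently, one can simply read $\lambda$ off from any one of the eigenvalues $\lambda_i$ appearing in the product, since all generalized eigenvalues of $L(0)$ on $P$ lie in a single coset mod $\Z$ dictated by the monodromy being scalar. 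Combining this with Theorem \ref{thm:liftsiff} yields the claimed equivalence.

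The step I expect to be the main subtlety is the last one: a priori $P$ need not be indecomposable, so one has to make sure the ``conformal dimension'' $h_P$ appearing in the statement is well-defined modulo $\Z$. This is exactly what is secured by the scalarness $M_{J,P}=\lambda\,\Id$ obtained from Lemma \ref{lem:infordermonodromy}, so no extra work is required beyond the observations above.
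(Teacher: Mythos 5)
Your proposal is correct and follows essentially the same route as the paper: scalar monodromy $M_{J,A_i}=\lambda_i\Id$ from the twist formula on simples, Lemma \ref{lem:infordermonodromy} (1) and (3) to get $M_{J,P}=\lambda\Id_{J\boxtimes P}$, identification of $\lambda$ with $e^{2\pi i(h_{J\boxtimes P}-h_J-h_P)}$ (which the paper makes precise via equation \eqref{eqn:monodromyssnil}, killing the locally nilpotent part), and then Theorem \ref{thm:liftsiff}. The only blemish is the aside that one can ``read $\lambda$ off from any one of the $\lambda_i$'' --- in fact $\lambda=\prod_i\lambda_i$, and a general subquotient $P$ need not have all generalized $L(0)$-eigenvalues in a single coset mod $\Z$ --- but this remark is not used in your main argument, which stands as written.
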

\begin{proof}
Using Lemma \ref{lem:infordermonodromy}, we see that
$M_{J,P}=\lambda\Id_{J\boxtimes P}$ for some scalar $\lambda$. Using
this, using equation \eqref{eqn:monodromyssnil} and observing that
$\theta_{nil}$ and $(\theta^{-1})_{nil}$ are locally nilpotent, we get that
$M_{J,P}=(\theta_{J\boxtimes P})_{ss}\circ((\theta_{J})^{-1}\boxtimes
(\theta_P^{-1})_{ss})$. Now the conclusion follows.
\end{proof}

\begin{remark}\label{rem:Fisexact}
In Section \ref{sec:examples}, we will need the fact that the
induction functor $\cF$ is exact, in order to deduce the Loewy
diagrams of induced modules. In our setup, one can use the fact from
\cite{CLR} that $J\boxtimes \bullet$ is an exact functor for a
simple current $J$ to deduce that $\cF$ is exact. Alternately,
one can proceed as in Theorem 1.6 of \cite{KO}.
\end{remark}

We now summarize Huang's theorem \cite{H5} on when the Huang-Lepowsky-Zhang
theory can be applied.

\begin{thm} (Cf.\ \cite{H5}.)  \label{thm:cond1}
\begin{itemize}
\item Let $V$ be such that (1) $V$ is
  $C_1^a$-cofinite, i.e.,
  $\text{Span}\{ u_nv, L(-1)u\,|\, u,v\in V_+\}$ has finite
  codimension, (2) There exists a positive integer $N$ such that the
  differences between the real parts of the lowest conformal weights
  of irreducible $V$-modules are bounded by $N$ and such that the
  associative algebra $A_N(V)$ (cf.\ \cite{DLM3}) is finite dimensional and (3)
  Irreducible $V$-modules are $\R$-graded and $C_1$-cofinite.  Then
  the category of grading-restricted generalized $V$-modules
  (i.e., lower truncated modules with finite dimensional generalized
  weight spaces) satisfies
  the conditions required to invoke Huang-Lepowsky-Zhang's theory.

\item If $V$ is a $C_2$-cofinite such that $V_n=0$ for $n<0$ and
  $V_0 =\C\mathbf{1}$ then all the three conditions mentioned above
  are satisfied and the category of generalized grading-restricted
  modules of $V$ has a natural vertex tensor category structure, in
  particular, it has a braided tensor category structure.
\end{itemize}
\end{thm}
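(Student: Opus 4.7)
The plan is essentially to cite Huang's results, since the statement is explicitly presented as a reformulation of \cite{H5}, and then to verify that the three conditions in the first bullet are consequences of $C_2$-cofiniteness together with CFT-type assumptions in the second bullet.

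For the first bullet, the strategy is as follows. Huang--Lepowsky--Zhang's construction \cite{HLZ} of braided tensor category structure on grading-restricted generalized $V$-modules relies on (a) convergence and extension properties of products and iterates of logarithmic intertwining operators, and (b) the existence of enough tensor products $\boxtimes_{P(z)}$. The convergence/extension properties are established in \cite{H5} precisely under the three hypotheses listed: the $C_1^a$-cofiniteness of $V$ controls singular behaviour at $0$ and $\infty$, the bound on conformal weights together with finite-dimensionality of $A_N(V)$ controls the dimension of the spaces of intertwining operators, and the $\R$-grading and $C_1$-cofiniteness of irreducible modules gives the requisite analytic properties on each factor. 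I would simply assemble these ingredients and quote the main theorem of \cite{H5}, noting that braiding and twist are then given by Huang--Lepowsky's geometric construction as stated.

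For the second bullet, I would verify each of the three conditions in turn under the assumption that $V$ is $C_2$-cofinite, of CFT-type (i.e.\ $V_n = 0$ for $n<0$ and $V_0 = \C\mathbf{1}$). $C_2$-cofiniteness of $V$ implies $C_1$-cofiniteness (and hence $C_1^a$-cofiniteness) of $V$ itself, and more generally implies $C_1$-cofiniteness of every finitely-generated lower-bounded generalized $V$-module; in particular irreducible $V$-modules are $C_1$-cofinite. The $\R$-grading of irreducibles and the existence of a positive integer $N$ bounding real parts of differences of lowest conformal weights, together with finite-dimensionality of $A_N(V)$, follow from the general finiteness results for $C_2$-cofinite VOAs (work of Abe--Buhl--Dong, Miyamoto, and the cited \cite{Miy, HLZ}). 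Given these, the first bullet applies and delivers the vertex tensor category structure.

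The main obstacle, if any, is not really conceptual but bibliographical: the three conditions in the first bullet are not stated verbatim in one place, so one has to stitch together the formulation of \cite{H5} with the $C_2$-cofinite finiteness results of \cite{Miy, DLM3}, verifying that they match Huang's hypotheses exactly. Once this matching is done, the theorem reduces to a direct citation, and no new calculation is required.
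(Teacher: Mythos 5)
Your proposal is correct and matches the paper's treatment: the paper offers no proof of Theorem \ref{thm:cond1} at all, presenting it explicitly as a summary of Huang's results in \cite{H5}, so the entire content is the citation you propose to make. Your additional remarks on how $C_2$-cofiniteness and the CFT-type hypotheses yield the three conditions of the first bullet are consistent with what is established in \cite{H5} and the references therein, and go slightly beyond what the paper itself records.
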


We can also combine the main result of \cite{Miy}
with Theorems 12.15 and 12.16 of \cite{HLZ} to obtain the following.

\begin{thm}\label{thm:cond2}
Let $V$ be a vertex operator algebra and consider a full sub-category
$\cC$ of generalized modules of $V$ such that:
\begin{enumerate}
\item $\cC$ is abelian.
\item For each object $\cC$, the generalized weights are real numbers,
there is a $K\in\Z_+$ such that $(L(0)-L(0)_{ss})^K=0$, where $L(0)_{ss}$
is the semi-simple part of $L(0)$.
\item $\cC$ is closed under images, contragredients, taking finite direct
sums and $V$ is an object of $\cC$.
\item Every object of $\cC$ satisfies $C_1$-cofiniteness
($\text{span}\{u_{-1}w|u\in V_+,w\in W\}$ has finite codimension in $W$),
has finite dimensional generalized weight spaces with lower truncated weights
and is quasi-finite dimensional ($\bigoplus_{n<N}W_{[n]}$ is finite dimensional
for any $N\in\R$, where $W_{[n]}$ denotes the generalized eigenspace for $L(0)$
with generalized eigenvalue $n$.)
\end{enumerate}
Then $\cC$ has a natural vertex tensor category structure,
in particular, it has a braided tensor category structure.
\end{thm}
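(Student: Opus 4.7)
The plan is to treat this theorem as essentially a packaging statement, combining Miyamoto's closure theorem with the main results of \cite{HLZ}. First I would isolate exactly what HLZ Theorems 12.15 and 12.16 demand as input, namely: (a) the category $\cC$ must be closed under the $P(z)$-tensor product operation for all nonzero complex $z$, (b) each object in $\cC$ must be $C_1$-cofinite and satisfy the grading/quasi-finiteness assumptions so that the convergence and extension results for intertwining maps apply, and (c) associativity and commutativity of intertwining operators must hold in a sufficiently robust sense to yield the associativity and commutativity isomorphisms of the vertex tensor category.

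Next I would use the hypotheses (1)--(4) to feed the main result of \cite{Miy} to secure point (a). The role of Miyamoto's theorem here is exactly to prove that $C_1$-cofiniteness and quasi-finite dimensionality are preserved under the $P(z)$-tensor product, so that the full subcategory $\cC$ is closed under $\boxtimes_{P(z)}$. Concretely, given $W_1, W_2 \in \cC$, Miyamoto's construction realizes $W_1 \boxtimes_{P(z)} W_2$ as a quotient of a certain space of intertwining maps, and under hypotheses (2) and (4) one verifies that this quotient again has real generalized weights, bounded Jordan block size for $L(0)$, lower truncation, finite-dimensional generalized weight spaces, quasi-finite dimensionality, and $C_1$-cofiniteness. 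Combined with the closure properties from (3), this gives an abelian subcategory closed under the required operations.

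Having verified closure, I would invoke HLZ Theorem 12.15 to produce the associativity isomorphisms and HLZ Theorem 12.16 to produce the commutativity (braiding) isomorphisms. The hypotheses of these theorems are exactly the convergence and extension properties of products and iterates of intertwining maps, and these follow from $C_1$-cofiniteness together with the grading conditions in (2) and (4); this is precisely the content of the convergence theorems in \cite{HLZ} whose hypotheses Miyamoto's result was designed to meet. Unitality and the coherence axioms (pentagon, hexagon) then follow from the general framework of \cite{HLZ}, so the vertex tensor category structure exists, and the underlying braided tensor category structure at $z=1$ follows automatically.

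The main obstacle is really the closure step: without Miyamoto's work one does not know that $W_1 \boxtimes_{P(z)} W_2$ stays inside $\cC$, and all the HLZ machinery would be vacuous. Everything else is bookkeeping: once closure and the HLZ convergence hypotheses are in place, the output is a direct application of \cite[Thms.~12.15, 12.16]{HLZ}. I would therefore structure the written proof as a short verification that hypotheses (1)--(4) imply the technical inputs to \cite{Miy} and to the two HLZ theorems, followed by a one-line invocation of those results to produce the vertex tensor category structure.
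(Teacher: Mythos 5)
Your proposal matches the paper's proof: the paper likewise cites the Main Theorem of \cite{Miy} for closure of $\cC$ under $\boxtimes$ and then invokes Theorems 12.15 and 12.16 of \cite{HLZ} for the vertex tensor category structure. Your write-up simply spells out in more detail what those results require, which is consistent with the paper's (very terse) argument.
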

\begin{proof}
The Main Theorem of \cite{Miy} ensures that $\cC$ is closed
under $\boxtimes$. The rest follows from Theorems 12.15 and 12.16 of \cite{HLZ}.
\end{proof}

\section{Building logarithmic VOAs from parts}
\label{sec:examples}

As Carnahan suggested, one can now explicitly build
various VOAs from parts. Our interest is in the non semi-simple, i.e.,
logarithmic type and here we will construct a few examples. 

Recall the definition and construction of the contragredient modules
from Section 5.2 of \cite{FHL} and Theorem 2.34 of \cite{HLZ}.  Recall
the skew-symmetry and the adjoint operations on intertwining
operators, equations (3.77) and (3.87), respectively, and the
corresponding Propositions 3.44 and 3.46, respectively, from
\cite{HLZ}.  We use the same notations for contragredients
($\bullet'$), skew-symmetry ($\Omega_r(\bullet)$) and the adjoint
($\cA_r(\bullet)$) operations as in \cite{HLZ}.  Also recall from
\cite{HLZ} Definition 4.29 of a finitely reductive vertex operator
algebra.

Let $V$ be a finitely reductive simple vertex operator algebra such
that $V$ is isomorphic to its contragredient, i.e., $V\cong V'$.  For
such a $V$, by Theorem 4.33 of \cite{HLZ}, the category of $V$-modules
is closed under $\boxtimes_{P(z)}$ tensor products. Let $J$ be a
simple current for $V$.

Using Propositions 3.44 and 3.46 of \cite{HLZ} and the assumption that
$V\cong V'$, we know that the fusion rules
$\cN^J_{V,J}=\cN^J_{J,V}=\cN^{V'}_{J,J'}=\cN^V_{J,J'}=\cN^V_{J',J}$
are non-zero.  Therefore, by the universal property of the tensor
products, there exists a (non-zero) morphism
$J\boxtimes J'\rightarrow V$.  This means that $V$ is in fact a direct
summand of $J\boxtimes J'$ because $V$ is simple and finitely
reductive.  Since $J$ is simple, $J'$ is simple. Moreover, since $J$
is a simple current, $J\boxtimes J'$ is simple as well. Hence,
$J\boxtimes J'\cong V$.  Hence,
$$J'\cong (J^{-1}\boxtimes J)\boxtimes J'
\cong J^{-1}\boxtimes(J\boxtimes J')\cong J^{-1}\boxtimes V\cong J^{-1}.$$
In the particular case that $J$ is a self-dual simple current, i.e.,
$J\boxtimes J\cong V$, we indeed get that $J\cong J^{-1}\cong J'$.

The following proposition will be used later in the case when $V$ is a
simple finitely reductive vertex operator algebra such that
$V'\cong V$, $V_n=0$ for $n<0$ and $J$ is a self-dual simple
current.  However, we state the proposition in the most general
setting.

\begin{prop}\label{prop:J-OPE}
Let $V$ be vertex operator algebra and $J$ be a (non-zero)
$V$-module.  There exists an intertwining operator $\cY$ of type
$V' \choose J\, J'$ and elements $j\in J$ and $j'\in J'$ of lowest
conformal weight, say $d$, such that
\begin{align*}
\langle \cY(j,x)j',\one\rangle \neq 0.
\end{align*}
Moreover, if $V$ is such that $V_n=0$ for $n<0$, then, there exists
a non-zero $v'\in V'$ of conformal weight $0$ such that 
\begin{align}
\cY(j,x)j' =x^{-2d}v' + \cdots. 
\label{eq:non-degeneracy-useful}
\end{align}
\end{prop}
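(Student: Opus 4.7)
The plan is to produce $\cY$ as $\cA_r(\Omega_r(Y_J))$. Starting from $Y_J$, a nonzero intertwining operator of type ${J\choose V\,J}$, first apply the skew-symmetry operation $\Omega_r$ to obtain a nonzero operator $\Omega_r(Y_J)$ of type ${J\choose J\,V}$, and then apply the adjoint $\cA_r$ to produce a nonzero intertwining operator $\cY$ of type ${V'\choose J\,J'}$. Both $\Omega_r$ and $\cA_r$ are bijections between the relevant spaces of intertwining operators (Propositions 3.44 and 3.46 of \cite{HLZ}), so nonvanishing is automatic at each stage.

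To exhibit the non-degeneracy on the vacuum, I would compute $\langle \cY(j,x)j',\one\rangle$ explicitly. Unwinding the defining formulas for $\Omega_r$ and $\cA_r$ and invoking the vacuum property $Y_J(\one,y)=\Id_J$ to collapse the inner insertion yields, up to the conventional signs fixed by \cite{HLZ},
\begin{align*}
\langle \cY(j,x)j',\one\rangle = \langle j',\, e^{x^{-1}L(-1)}\, e^{xL(1)}\, (e^{i\pi}x^{-2})^{L(0)}\, j\rangle.
\end{align*}
Choose $j\in J_{[d]}$ to be a genuine $L(0)$-eigenvector of eigenvalue $d$; such a $j$ exists because $L(0)-d$ is nilpotent on the finite-dimensional lowest generalized weight space $J_{[d]}$ and so has nontrivial kernel. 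Then $L(1)j=0$ since $J_{[d-1]}=0$, and $(e^{i\pi}x^{-2})^{L(0)}j=e^{i\pi d}x^{-2d}j$ without logarithmic corrections. The remaining $e^{x^{-1}L(-1)}j$ produces components of strictly higher generalized weight, which are annihilated by the pairing with any $j'\in J'_{[d]}$ because the contragredient pairing is block-diagonal with respect to generalized weights. Hence
\begin{align*}
\langle \cY(j,x)j',\one\rangle = e^{i\pi d}\, x^{-2d}\, \langle j',j\rangle,
\end{align*}
and non-degeneracy of the pairing $J'_{[d]}\otimes J_{[d]}\to\C$ lets us choose $j'$ with $\langle j',j\rangle\neq 0$, proving the first claim.

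For the moreover part, the $L(0)$-commutator formula for intertwining operators forces the coefficient of $x^n$ in $\cY(j,x)j'$ to lie in the generalized $L(0)$-eigenspace of $V'$ with eigenvalue $2d+n$. Under the assumption $V_n=0$ for $n<0$ we also have $V'_n=0$ for $n<0$, so no power of $x$ below $x^{-2d}$ can appear, and the leading coefficient $v'$ lies in $V'_0$. The explicit identity above then gives $\langle v',\one\rangle=e^{i\pi d}\langle j',j\rangle\neq 0$, hence $v'\neq 0$, as required.

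The main obstacle is purely bookkeeping: tracking the precise $L(0)$, $L(1)$, $L(-1)$-conjugation and sign factors produced by $\Omega_r$ and $\cA_r$ in the HLZ conventions so that the explicit formula above is obtained correctly. Once this is pinned down, the argument reduces to the vacuum property of $Y_J$ together with non-degeneracy of the contragredient pairing on lowest-weight spaces.
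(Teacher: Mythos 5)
Your proposal is correct and takes essentially the same route as the paper: both construct $\cY$ as the adjoint of a skew-symmetrized module map, $\cA_r(\Omega_s(Y_J))$, collapse the vacuum insertion using skew-symmetry and $Y_J(\one,y)=\Id_J$, and use $L(1)$-annihilation of lowest-weight vectors plus non-degeneracy of the contragredient pairing to obtain $\langle\cY(j,x)j',\one\rangle=(\mathrm{const})\,x^{-2d}\langle j',j\rangle\neq 0$. The ``moreover'' part via $V'_n=0$ for $n<0$ and the weight count on coefficients is likewise identical to the paper's argument.
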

\begin{proof}
Let $j\in J$ be any (non-zero) vector of lowest conformal dimension
$d$, and let $j'\in J'$ be of the same conformal dimension such that
$\langle j', j\rangle\neq 0$. Let $Y$ be the module map corresponding
to $J$. Pick $r,s\in\Z$ and let $\cY$ be the (non-zero)
intertwining operator of type ${V'\choose J\, J'}$ given by
$\cA_{r}(\Omega_{s}(Y))$.  Being a module map, $Y$
has no monodromy and therefore $\Omega_{s}(Y)$ has no monodromy and
$\cY$ is independent of $s$.  We have:
\begin{align}
\langle  \cY(j,x)j', \one\rangle 
&= 
\langle  \cA_{r}(\Omega_{s}(Y))(j,x)j',\one \rangle 
\nonumber \\
&= \langle j',\Omega_{s}(Y)(e^{xL(1)}e^{(2r+1)\pi iL(0)}(x^{-L(0)})^2j,x^{-1})
 \one \rangle
\nonumber \\
&= \langle j',\Omega_{s}(Y)(e^{(2r+1)\pi iL(0)}(x^{-L(0)})^2j,x^{-1})
 \one \rangle
\nonumber \\
&= e^{d(2r+1)\pi i}x^{-2d}\langle j',\Omega_{s}(Y)(j,x^{-1}) \one \rangle
\nonumber \\
&= e^{d(2r+1)\pi i}x^{-2d}\langle j', e^{x^{-1}L(-1)}Y(\one ,-x^{-1}) j  \rangle
\nonumber \\
&= e^{d(2r+1)\pi i}x^{-2d}\langle e^{x^{-1}L(1)}j', j  \rangle
\nonumber \\
&= e^{d(2r+1)\pi i}x^{-2d}\langle j',j \rangle \neq 0.
\label{eq:non-degeneracy}
\end{align}
Moreover, if $V$ is such that $V_n=0$ for all $n<0$, then
$(V')_n=0$ for all $n<0$ which means that $\cY(j,x)j'$ does not
have any powers of $x$ lower than $x^{-2d}$ and 
equation \eqref{eq:non-degeneracy-useful} follows.
\end{proof}

The following proposition  will be used later  to calculate
quantum dimensions of certain simple currents.
\begin{prop}\label{prop:qdimsimplecurrent}
Let $J$ be a self-dual simple current. 
Let $j\in J$ be a non-zero element of lowest conformal weight, say $d$.
Then, 
\begin{align}
\label{eqn:cJJsimplecurrent}
c_{J,J}= (-1)^Ne^{-2\pi i d}
\end{align}
where $N\in\Z$ is such that
\begin{align*}
\cY(j,x)j = vx^{-2d + N} + \sum_{n>N, n\in \Z}v_nx^{-2d+n},
\end{align*}
for any non-zero intertwining operator $\cY$ of type $V\choose J\, J$
and $v,v_n\in V$ with $v\neq 0$.  Moreover, if the category is ribbon
then,
\begin{align}
\qdim(J)=(-1)^Ne^{-4\pi i d}
\label{eqn:qdimsimplecurrent}.
\end{align}
\end{prop}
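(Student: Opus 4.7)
My plan is to extract the first equation directly from the skew-symmetry identity \eqref{eqn:cJJskew} established in the course of Theorem \ref{thm:constructingVOSA}, and then to derive the second equation from the spin-statistics Corollary \ref{cor:spinstatistics}.

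First, since $J$ is self-dual and simple, the fusion rule $\dim\binom{V}{J\,J}=1$ holds, so any two non-zero intertwining operators of this type differ by a non-zero scalar; both the integer $N$ and the scalar $c_{J,J}$ are therefore independent of the choice of $\cY$. Specializing \eqref{eqn:cJJskew} to $j_1=j_2=j$ gives
\begin{align*}
c_{J,J}\,\cY(j,1)j \;=\; e^{L(-1)}\cY(j,-1)j.
\end{align*}
I would substitute the hypothesized expansion $\cY(j,x)j = v\,x^{-2d+N}+\sum_{n>N}v_n\,x^{-2d+n}$ with $0\neq v\in V_N$ into both sides and read off the component of lowest conformal weight. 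The left-hand side has leading piece $c_{J,J}v\in V_N$. For the right-hand side, evaluating at $x=-1$ requires interpreting $(-1)^{-2d+n}$ via the branch $-1=e^{i\pi}$, yielding $(-1)^{-2d+n}=e^{i\pi(-2d+n)}=(-1)^n e^{-2\pi i d}$; hence the leading-weight term of $\cY(j,-1)j$ equals $(-1)^N e^{-2\pi i d}v$. Since $L(-1)$ strictly raises conformal weight by one, the exponential $e^{L(-1)}$ does not affect this bottom piece. Matching the two sides in $V_N$ then produces $c_{J,J}=(-1)^N e^{-2\pi i d}$, which is \eqref{eqn:cJJsimplecurrent}.

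For the ribbon statement, Corollary \ref{cor:spinstatistics} gives $c_{J,J}\,\qdim(J)=\tilde\theta_J = e^{2\pi i d}$, and solving for $\qdim(J)$ using the explicit value of $c_{J,J}$ just obtained (together with the balancing identity $c_{J,J}^2\,\theta_J^2 = \theta_V = 1$, i.e.\ $c_{J,J}^2 = e^{-4\pi i d}$, which lets one recast $c_{J,J}^{-1}$ in the form $c_{J,J}\cdot e^{4\pi i d}$) yields the claimed value $\qdim(J)=(-1)^N e^{-4\pi i d}$ for \eqref{eqn:qdimsimplecurrent}.

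The only subtle points in this plan are the correct interpretation of the non-integer power $(-1)^{-2d}$ via the principal branch $-1=e^{i\pi}$ forced by the Huang-Lepowsky-Zhang complex-variable framework, and the observation that $e^{L(-1)}$ acts as the identity on the bottom weight component of its argument. Neither of these presents a serious obstacle, so once the preliminary machinery of Theorem \ref{thm:constructingVOSA} and Corollary \ref{cor:spinstatistics} is in hand, the argument reduces to a direct extraction of lowest-weight coefficients.
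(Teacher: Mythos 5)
Your argument is correct and follows the paper's own proof essentially verbatim: the paper likewise reads off $c_{J,J}$ by comparing the lowest-weight component of $c_{J,J}\cY(j,1)j$ with that of $e^{L(-1)}\cY(j,e^{\pi i})j$ (your equation \eqref{eqn:cJJskew} specialized to $j_1=j_2=j$ is exactly this identity), and then invokes Corollary \ref{cor:spinstatistics} for the quantum dimension. One half-step worth making explicit: your final substitution as written literally yields $(-1)^N e^{+4\pi i d}$ rather than $(-1)^N e^{-4\pi i d}$; the two agree because $\qdim(J)^2=\qdim(J\boxtimes J)=\qdim(V)=1$ forces $e^{8\pi i d}=1$, so the result stands.
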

\begin{proof}
First, note that all intertwining operators of type $V \choose J\,J$
are scalar multiples of each other as $J$ is a simple current. 
Fix an isomorphism $i:J\boxtimes J \rightarrow V$.
Let $\cY$ be the intertwining operator corresponding to the 
intertwining map $\bar{i}\circ\boxtimes$.
By the definition of braiding, we know that
\begin{align*}
c_{J,J}\cY(j,1)j &= e^{L(-1)}\cY(j,e^{\pi i})j
=e^{L(-1)}\Bigl(ve^{(-2d + N)\pi i} + \sum_{n>N, n\in \Z}v_ne^{(-2d+n)\pi i}\Bigr)
= ve^{(-2d + N)\pi i} + \cdots,
\end{align*}
where the ellipses denote a sum over elements that have strictly higher weight
than $v$.
However, 
\begin{align*}
c_{J,J}\cY(j,1)j &= c_{J,J}(v + \cdots).
\end{align*}
Comparing, we arrive at equation \eqref{eqn:cJJsimplecurrent}.
If the category is ribbon, we can use the spin statistics theorem, i.e.,
Corollary \ref{cor:spinstatistics} to deduce \eqref{eqn:qdimsimplecurrent}.
\end{proof}

\subsection{Rational building blocks}
\allowdisplaybreaks
Quantum dimensions are only known for rational VOAs via the Verlinde
formula, however note that \cite{CG} suggests that this generalizes to
the $C_2$-cofinite setting.  In a unitary VOA, the quantum dimension
of a simple current is always one. In a non-unitary VOA this
is not guaranteed anymore.  Our building blocks are
\begin{enumerate}
\item The simple rational Virasoro vertex algebra $\vir(u, v)$
at central charge
$c_{u, v}=1-6(u-v)^2/(uv)$
where $u, v$ are coprime positive integers larger than two.  This VOA
is non-unitary, except if $|u-v|=1$. It has a self-dual simple current
$J_{u, v}$ of conformal dimension $h_{u, v}= (u-2)(v-2)/4$  and quantum dimension
 $\qdim\left(J_{u, v}\right) = (-1)^{u+v+1},$
which can be obtained from \cite{IK}.
\item Let $L$ be an even positive definite lattice, then the
associated lattice VOA $V_L$ has the property that it has the least
conformal dimension among all its irreducible modules, therefore, by
Proposition 4.17 and Example 4.19 of \cite{DJX}, the quantum
dimension of a simple current is one. The conformal dimension of a
simple current is given by the norm squared over two of the
corresponding coset representative. Actually, in this case,
each irreducible module is a simple current.
\item The simple affine VOA $L_k(\gs\gl_2)$ for a positive integer $k$ is
rational and unitary, it has a self-dual simple current $K_k$ of
conformal dimension $\frac{k}{4}$.
\end{enumerate}

\subsection{Logarithmic extensions}

Recall the triplet VOAs $\mathcal W(p)$ are $C_2$-cofinite
non-rational VOAs \cite{FGST, AdM, TW}. They are defined as
\[
\cW(p) = \text{ker}_Q \left(V_{\sqrt{2p}\mathbb Z}\right)
\]
where $Q$ is a certain specific screening operator that intertwines
lattice VOA modules.  $\cW(p)$ has an order two simple current,
denoted by $X_1^-$, obtained from the only self-dual simple current $J$ of $V_{\sqrt{2p}}$  in the straight-forward manner
$X_1^- = \text{ker}_Q \left(J\right)$.
The ordinary Virasoro element is actually not in the kernel, but only
a shifted version of central charge $c=1 -6(p-1)^2/p$
and the conformal dimension of the two lowest-weight states of $X^-_1$
under the Virasoro-zero mode is $(3p-2)/4$.
Now we derive that 
\begin{align}\label{eqn:qdimWp}
\qdim(X_1^-)=-(-1)^p.
\end{align}
Indeed, using the lattice realization of the $\cW(p)$ algebra and 
its simple current $X_1^-$ from \cite{AdM}, we know that the $N$
in Proposition \ref{prop:qdimsimplecurrent} is such that
$-2(3p-2)/4+N = p/2$. This immediately yields
the equation \ref{eqn:qdimWp}.
We then get two types of new logarithmic VOAs as
\[
\gA_p= \cW(p)\otimes  L_{p-2}(\gs\gl_2) \oplus X^-_1\otimes K_{p-2}
\qquad
\text{and}
\qquad
\gB_p=  
\cW(p)\otimes   \vir(3, p) \oplus X^-_1\otimes  J_{p, 3}
\]
the simple currents in the first one have now conformal dimension
$p-1$ and hence we get a VOA of correct statistics if $p$ is odd
and a super VOA of wrong statistics if $p$ is even.  In the second
case, the simple currents also have dimension $p-1$ so that for each
$p$ it is a super VOA of wrong statistics.
We also consider 
\[
\gC_p = \cW(p)\otimes\cW(p)\oplus X_1^-\otimes X_1^-.
\]
Here the conformal dimension of the simple current is 
$(3p-2)/2$, and hence, we get a VOA of correct statistics if $p$ is
even and a super VOA of correct statistics if $p$ is odd.

We would now like to employ Corollary
\ref{cor:liftingsimple} and Theorem \ref{cor:liftingprojective}
to decide how the modules lift to extensions.
We have the following list of inequivalent indecomposable modules for
$\cW(p)$, $L_k(\gs\gl_2)$ and $\vir(3,p)$ and their conformal
dimensions:

\renewcommand{\arraystretch}{2}
\begin{table}[ht]
\begin{tabular}{|c|c|c|c|}
\hline
VOA & Module & Type & Conformal weight \\
\hline
$\cW(p)$ & $X_s^\pm$, $s\in N_p$ & Simple &  $h_s^+  = \dfrac{(p-s)^2-(p-1)^2}{4p}$\\
 & $P_s^\pm$, $s\in N_{p-1}$ & Reducible, indecomposable &  $h_s^- = \dfrac{(2p-s)^2-(p-1)^2}{4p}$\\
\hline
$L_k(\gs\l_2)$ & $L((k-t)\Lambda_0+t\Lambda_1)$, $t\in N^0_{k}$ & Simple & 
$h_{(k-t)\Lambda_0+t\Lambda_1}= \dfrac{t(t+2)}{4(k+2)}$\\
\hline
$\vir(3,p)$ & $\phi_{1,s}$, $s\in N_{p-1}$ 
& Simple & $h_{\phi_{1,s}}= \dfrac{(p - 3s)^2 - (p-3)^2}{12p}$\\
\hline
\end{tabular}
\label{tab:data}\caption{Modules and conformal weights, $X^\pm_s$ and $P^\pm_s$ have conformal weight $h^\pm_s$.}
\end{table}
\renewcommand{\arraystretch}{1}
Where $N_p=\{1,2,\dots,p\}$ and $N^0_p=\{0,1,\dots,p\}$.
It is also known how the simple currents permute the indecomposables.
\begin{align*}
\cW(p)&: \quad  
X_1^-\boxtimes X_s^{\pm}  \cong X_s^\mp, \qquad
X_1^-\boxtimes P_s^{\pm}  \cong P_s^\mp,\\
L_k(\gs\gl_2)&: \quad L(k\Lambda_1) \boxtimes L((k-t)\Lambda_0 + t\Lambda_1) \cong 
L(t\Lambda_0 + (k-t)\Lambda_1),\\
\vir(3,p)&: \quad \phi_{1,p-1}\boxtimes \phi_{1,s} \cong \phi_{1,p-s}.
\end{align*}
We first look at $\gA_p$. Consider the following
classes of modules.
\begin{align*}
I_{\gA_p} &= \{   X_s^+\otimes L((p-2-t)\Lambda_0+t\Lambda_1), 
X_s^-\otimes L(t\Lambda_0+(p-2-t)\Lambda_1) \, | \, 
\\
& \qquad\qquad s\in N_p, t\in N^0_{p-2},\, s-t\not\equiv p\pmod{2}
\}\\
P_{\gA_p} &= \{   P_s^+\otimes L((p-2-t)\Lambda_0+t\Lambda_1), 
P_s^-\otimes L(t\Lambda_0+(p-2-t)\Lambda_1) \, | \, 
\\
&\qquad\qquad s\in N_{p-1},\, t\in N^0_{p-2},\, s-t\not\equiv p\pmod{2}
\}
\end{align*}
For $\gB_p$, consider the following
classes of modules.
\begin{align*}
I_{\gB_p} &= \{   X_s^+\otimes \phi_{1,t}, 
X_s^-\otimes \phi_{1,p-t} \, | \, 
s\in N_p, t\in N_{p-1}, s-t\equiv p\pmod{2}
\}\\
P_{\gB_p} &= \{   P_s^+\otimes \phi_{1,t}, 
P_s^-\otimes \phi_{1,p-t} \, | \, 
s\in N_{p-1},\, t\in N_{p-1},\, s-t\equiv p\pmod{2}
\}
\end{align*}
For $\gC_p$, consider the following
classes of modules.
\begin{align*}
I_{\gC_p} &= \{   X_s^+\otimes X_t^+,
X_s^-\otimes X_t^- | \, 
s\in N_p, t\in N_p, s+t\equiv p \pmod{2}
\}\\
& \cup
\{   X_s^+\otimes X_t^-,
X_s^-\otimes X_t^+ | \, 
s\in N_p, t\in N_p, s-t\equiv 0\pmod{2}
\}\\
P_{\gC_p} &= \{   P_s^+\otimes X_t^+,P_s^+\otimes X_t^+, P_s^+\otimes P_t^+,
X_s^-\otimes P_t^-, P_s^-\otimes X_t^-,  P_s^-\otimes P_t^-\,| \, \\
&\quad 
s\in N_p,t\in N_p,\, s+t\equiv p\pmod{2}
\}\\
& \cup
\{P_s^+\otimes X_t^-,P_s^+\otimes X_t^-, P_s^+\otimes P_t^-,
X_s^-\otimes P_t^+, P_s^-\otimes X_t^+,  P_s^-\otimes P_t^+\,| \, \\
&\quad 
s\in N_p,t\in N_p,\, s-t\equiv 0\pmod{2}
\}
\end{align*}
It is clear that the indecomposable modules have finite dimensional
endomorphism rings, because they have finite length. It is also 
known that the Jordan blocks for $L(0)$ are bounded in size.
Therefore, in each case, $I_\bullet$ lift to simple modules,
$P_\bullet$ to reducible indecomposable modules.

\begin{figure}[h]
\begin{center}
\begin{tikzpicture}[thick,>=latex,
nom/.style={circle,draw=black!20,fill=black!20,inner sep=1pt}
]
\node (top1) at (5,1.5) [] {$ A\otimes L $};
\node (left1) at (3.5,0) [] {$ B\otimes L $};
\node (right1) at (6.5,0) [] {$ B\otimes L $};
\node (bot1) at (5,-1.5) [] {$  A \otimes L$};
\draw [->] (top1) -- (left1);
\draw [->] (top1) -- (right1);
\draw [->] (left1) -- (bot1);
\draw [->] (right1) -- (bot1);

\node (top2) at (12,1.5) [] {$ \cF(A\otimes L) $};
\node (left2) at (10.5,0) [] {$ \cF(B\otimes L) $};
\node (right2) at (13.5,0) [] {$ \cF(B\otimes L) $};
\node (bot2) at (12,-1.5) [] {$  \cF(A\otimes L)$};
\draw [->] (top2) -- (left2);
\draw [->] (top2) -- (right2);
\draw [->] (left2) -- (bot2);
\draw [->] (right2) -- (bot2);
\draw [->,dotted] (right1) -- node[above] {$\cF$} (left2) ;
\end{tikzpicture}
\caption{\label{fig:Ap} An example in the the case $\gA_p$.
Here, $A=X_s^\pm$, $B =X_s^\mp$,  $L=L((p-2-t)\Lambda_0+t\Lambda_1)$.}
\end{center}
\end{figure}
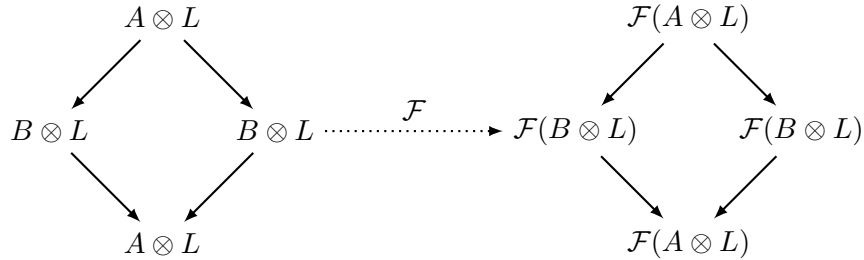

\begin{figure}[h]
\begin{center}
\begin{tikzpicture}[thick,>=latex]
\node (t11) at (11,4) [] {$ B\otimes D $};
\node (t33) at (4,4) [] {$ A \otimes C $};
\node (b11) at (11,0) [] {$B\otimes D $};
\node (b33) at (4,0) [] {$A\otimes C $};
\node (01') at (13,2) [] {$A\otimes D $};
\node (31') at (9,2) [] {$A\otimes D $};
\node (13') at (6,2) [] {$B\otimes C $};
\node (63') at (2,2) [] {$B\otimes C $};
\draw [->] (t11) -- (01');
\draw [->] (01') -- (b11);
\draw [->] (t11) -- (31');
\draw [->] (31') -- (b11);
\draw [->] (t33) -- (13');
\draw [->] (13') -- (b33);
\draw [->] (t33) -- (63');
\draw [->] (63') -- (b33);
\draw [->,dotted] (t11) -- (63');
\draw [->,dotted] (13') -- (b11);
\draw [->,dotted] (t33) -- (31');
\draw [->,dotted] (31') -- (b33);
\draw [->,dotted] (t11) -- (13');
\draw [->,dotted] (t33) -- (01');
\draw [->,dotted] (01') -- (b33);
\draw [->,dotted] (63') -- (b11);
\end{tikzpicture}
\caption{\label{fig:Cp}
An indecomposable $\cW(p)\otimes \cW(p)$-module that is 
not a tensor product of two indecomposable $\cW(p)$-modules.
Here, $A=X_s^\pm$,  $B=X_s^\mp$, $C=X_t^{\pm}$,
$D=X_t^\mp$.
The solid arrows denote the action of the left tensorand
and the dashed arrows denote the action of the right tensorand.}
\end{center}
\end{figure}
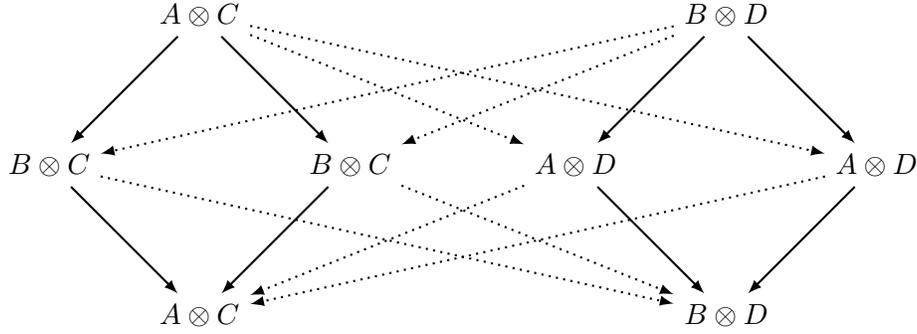

Using Remark \ref{rem:Fisexact} and using the known Loewy diagrams of
the indecomposables, one can quickly deduce Loewy diagrams of the induced modules.
In Figure \ref{fig:Ap} we show an example in the case $\gA_p$.
For $\cW(p)\otimes \cW(p)$ one can have indecomposable modules that 
are not tensor products of indecomposable modules for the individual tensor
factors, for example, see Figure \ref{fig:Cp}, (cf.\ \cite{CR3}).
These modules again induce as in Figure \ref{fig:Ap}.

\subsection{On W-algebras}

The following conjecture is from the physics literature \cite{BEHHH}.
\begin{conj}\label{conj:W}
Let $\cW^{(2)}_r$ be the Feigin-Semikhatov algebra \cite{FS} of level
$k=\frac{n-r^2+2r}{r-1}$ then
\[
\com\left(H, \cW^{(2)}_r\right) \cong W_{A_{n-1}}\left(n+1, n+r\right).
\]
Especially $\cW^{(2)}_r$ is rational.
\end{conj}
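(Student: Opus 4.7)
The strategy is to adapt the argument of \cite{ACL}, which treats the Bershadsky-Polyakov case $r=3$, to arbitrary $r$, making essential use of the simple current extension techniques developed in this paper. Write $C := \com(H, \cW^{(2)}_r)$. The first step is to pin down $H$ concretely: $\cW^{(2)}_r$ contains a canonical weight-one current inherited from the Cartan of $\gs\gl_r$ under subregular Drinfeld-Sokolov reduction, and this generates $H$. A preliminary consistency check is the central charge calculation $c(\cW^{(2)}_r)-1 = c(W_{A_{n-1}}(n+1,n+r))$ at the level $k = (n-r^2+2r)/(r-1)$, together with a match of the conformal weights of the expected strong generators of $C$ with those of the principal W-algebra $W_{A_{n-1}}(n+1,n+r)$.

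The heart of the argument is to prove a decomposition
\begin{equation*}
\cW^{(2)}_r \;\cong\; \bigoplus_{\lambda \in L} \pi_\lambda \otimes C_\lambda,
\end{equation*}
where $L$ is a rank-one lattice of Heisenberg charges, $\pi_\lambda$ is the corresponding Fock module, and $\{C_\lambda\}_{\lambda\in L}$ is a simple current orbit of $C$-modules. The route I would take is to first establish this identity at the level of graded characters using modular data and free-field realizations of $\cW^{(2)}_r$ from \cite{FS}, and then to promote it to a VOA isomorphism using the simple current extension theorems of Section~3, in particular Theorem~\ref{thm:generalorderJ} together with the lifting criterion Theorem~\ref{cor:inforderJlifting}. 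In the reverse direction these same tools yield the rationality conclusion: once $C$ is identified with the rational W-algebra $W_{A_{n-1}}(n+1,n+r)$, the infinite-order simple current extension $H\otimes C \subset \cW^{(2)}_r$, together with the lifting criteria of Section~3, forces $\cW^{(2)}_r$ itself to be rational.

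To identify $C$ with $W_{A_{n-1}}(n+1,n+r)$ I would proceed by uniqueness. Using a Miura/screening free-field realization one constructs an explicit morphism from the universal W-algebra $W^{k'}_{A_{n-1}}$ at the $(n+1,n+r)$ minimal-model level into $C$; showing that this morphism descends to the simple quotient (for instance by matching vacuum characters or by exhibiting the needed screening identities) and that both sides are simple then forces an isomorphism.

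The main obstacle is the decomposition step, and more specifically obtaining $C_2$-cofiniteness of $C$ a priori, without already knowing the target isomorphism. For $r=3$ the explicit Wakimoto realization of the Bershadsky-Polyakov algebra made both the decomposition and $C_2$-cofiniteness tractable; in general neither is immediate. A plausible route is to deduce $C_2$-cofiniteness of $C$ from $C_2$-cofiniteness inputs already available for principal W-algebras at admissible levels, and to upgrade the character-level decomposition to a VOA-level decomposition by combining the semi-simplicity and lifting results of the present paper with bounds on Jordan block sizes of $L(0)$ coming from the subregular reduction. Bridging from characters to VOA decompositions in this potentially non-semisimple setting is where the bulk of the technical effort would lie.
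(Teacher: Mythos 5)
You should first note that the statement you are trying to prove is stated in the paper as a \emph{conjecture} (imported from the physics literature \cite{BEHHH}), not as a theorem: the paper does not prove it for general $r$ and $n$. What the paper actually does is (a) record that the case $r=3$ is settled in \cite{ACL} using the present paper's extension machinery, and (b) give evidence for the case $n=2$ by running the construction \emph{in the opposite direction} from yours: it builds the candidate $\cW^{(2)}_r$ as the simple current extension $\vir(3,2+r)\otimes V_L\oplus J_{3,2+r}\otimes J$ of the (known, rational) coset side, checks the central charge, exhibits the expected strong generators via Proposition \ref{prop:J-OPE}, and then only matches the leading OPE coefficients of $G^+$ with $G^-$ against \cite{FS}. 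Even in that restricted setting the identification with $\cW^{(2)}_r$ is left open, precisely because the full OPEs of the Feigin--Semikhatov algebras are only known for $r=2,3$.

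This last point is the concrete gap in your proposal. Your plan starts from $\cW^{(2)}_r$ and tries to decompose it as $\bigoplus_{\lambda\in L}\pi_\lambda\otimes C_\lambda$ and then to map the universal W-algebra $W^{k'}_{A_{n-1}}$ into $C=\com(H,\cW^{(2)}_r)$ ``by a Miura/screening realization.'' For general $r$ the algebra $\cW^{(2)}_r$ is only defined through a conjectural Drinfeld--Sokolov reduction (see the paper's remark following the conjecture), its OPEs are not known, and neither rationality nor $C_2$-cofiniteness is available; so the ``modular data'' you propose to use for the character-level decomposition does not exist yet --- the argument is circular, since modularity of characters is essentially what rationality would give you. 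Likewise, a character identity in a possibly non-semisimple, non-$C_2$-cofinite setting does not determine the module structure, and the paper's lifting criteria (Theorems \ref{cor:liftingprojective} and \ref{cor:inforderJlifting}) presuppose that you already have a vertex tensor category on the coset side with controlled Jordan blocks --- they do not produce the decomposition of the ambient algebra. Finally, your closing inference ``$C$ rational $\Rightarrow$ $\cW^{(2)}_r$ rational'' is not supplied by anything in this paper: Theorem \ref{thm:generalorderJ} only yields a (strongly graded) vertex operator superalgebra structure on an infinite-order simple current extension, and says nothing about rationality of the result. If you want a workable route, follow the paper's direction: construct the extension $V_e$ of $H\otimes W_{A_{n-1}}(n+1,n+r)$ by the appropriate simple current orbit using Theorem \ref{thm:generalorderJ}, and then face the genuinely hard step of identifying $V_e$ with $\cW^{(2)}_r$ --- which currently can only be done where the OPEs of the latter are known, i.e.\ for $r\le 3$.
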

\begin{remark}The Feigin-Semikhatov algebra $\cW^{(2)}_r$ in turn is
believed to be a quantum Hamiltonian reduction of $V_k(\gs\gl_r)$
for a certain non-principal nilpotent element. This conjecture is true for
$r=2, 3$ as only in these two cases all involved OPEs are known.
\end{remark}
Conjecture \ref{conj:W} is true for $r=3$ \cite{ACL}. The proof uses
our results.  The following is immediate from the previous subsection:
\begin{prop}
Let $L=\sqrt{2r}\mathbb Z$ and let $J$ be its
only self-dual simple current, then
\[
\vir(3, 2+r)\otimes V_L\oplus J_{3, 2+r} \otimes J
\]
is a W-algebra that is strongly generated by two dimension
$\frac{r}{2}$ fields, a Heisenberg field and the Virasoro field and
has same central charge as $\cW^{(2)}_r$ at level
$k=\frac{2-r^2+2r}{r-1}$.
\end{prop}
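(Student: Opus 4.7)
The plan is to apply Theorem \ref{thm:constructingVOSA} to $J_e := J_{3, 2+r} \otimes J$, viewed as a self-dual simple current in the module category of $V := \vir(3, 2+r) \otimes V_L$. I would first observe that both constituent VOAs are simple, rational, and $C_2$-cofinite, so their outer tensor product shares these properties; consequently Theorems \ref{thm:cond1} and \ref{thm:cond2} apply, yielding a modular (hence ribbon) tensor category, and fusion-rule factorisation across the outer tensor product shows that $J_e$ is a self-dual simple current. Its conformal weight is $h_{J_e} = \tfrac{(3-2)(r+2-2)}{4} + \tfrac{1}{2}\bigl(\sqrt{r/2}\bigr)^{2} = \tfrac{r}{2} \in \tfrac{1}{2}\Z$, so $\theta_{J_e} = (-1)^{r}\Id_{J_e}$, satisfying the hypothesis of Theorem \ref{thm:constructingVOSA}. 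To determine the parity I invoke the spin-statistics Corollary \ref{cor:spinstatistics}: quantum dimensions multiply across outer tensor products, so $\qdim(J_e) = (-1)^{3+(2+r)+1}\cdot 1 = (-1)^{r}$, whence $c_{J_e, J_e} = \theta_{J_e}/\qdim(J_e) = 1$. Theorem \ref{thm:constructingVOSA} therefore endows $V_e := V \oplus J_e$ with the structure of a $\tfrac{1}{2}\Z$-graded vertex operator algebra (genuinely $\Z$-graded when $r$ is even).

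For strong generation, the lowest weight space of $J_{3, 2+r}$ is one-dimensional (the top of an irreducible Virasoro module), while that of $J$ is two-dimensional, spanned by $e^{\pm\sqrt{r/2}}$ because the cosets $\pm\sqrt{r/2} + L$ coincide (since $2\sqrt{r/2} = \sqrt{2r}$ generates $L$). This produces two conformal-weight-$r/2$ primary fields $\psi_\pm$ in $J_e$. Adjoining the Heisenberg field $h$ of $V_L$ and the total Virasoro $T_{V_e}$, one recovers $T_{V_L} = \tfrac{1}{2}{:}hh{:}$ and therefore $T_{\vir(3, 2+r)} = T_{V_e} - T_{V_L}$, strongly generating $\vir(3, 2+r)$. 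By Proposition \ref{prop:J-OPE} together with the self-duality fusion $J_{3, 2+r} \boxtimes J_{3, 2+r} \cong \vir(3, 2+r)$, the leading singular term of the OPE $\psi_+(z)\psi_+(w)$ is a non-zero multiple of $\mathbf{1}\otimes e^{\sqrt{2r}}(w)$; analogously $e^{-\sqrt{2r}}$ is extracted from $\psi_-(z)\psi_-(w)$. These, together with $h$, strongly generate $V_L$, so the four fields $\{T_{V_e}, h, \psi_+, \psi_-\}$ strongly generate $V_e$.

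Finally, central charge is additive across outer tensor products, $c_{V_e} = c_{\vir(3, 2+r)} + c_{V_L} = \bigl(1 - \tfrac{2(r-1)^{2}}{r+2}\bigr) + 1 = \tfrac{2(r+2) - 2(r-1)^{2}}{r+2}$, and equality with the central charge of $\cW^{(2)}_r$ at $k = (2-r^2+2r)/(r-1)$ is a direct substitution into the standard Feigin-Semikhatov central-charge formula. The only non-formal step is the strong-generation claim: extracting $e^{\pm\sqrt{2r}}$ from the OPE of $\psi_\pm$ with themselves requires both the non-degeneracy of Proposition \ref{prop:J-OPE} and the fact that $V_0 = \mathbb{C}\mathbf{1}$, which together force the leading singular OPE coefficient in the $V_L$-tensorand to be a scalar multiple of $\mathbf{1}\otimes e^{\pm\sqrt{2r}}$ rather than a non-trivial combination in $\vir(3, 2+r)\otimes V_L$.
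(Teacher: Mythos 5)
Your proof is correct and takes essentially the same route as the paper's: the twist/quantum-dimension computation via Corollary \ref{cor:spinstatistics} giving $c_{J_e,J_e}=(-1)^r/(-1)^r=1$, and the recovery of $e^{\pm\sqrt{2r}}$ from products of the two weight-$\frac{r}{2}$ fields via Proposition \ref{prop:J-OPE} together with the standard lattice OPEs. One terminological slip: since $\mathbf{1}\otimes e^{\pm\sqrt{2r}}$ has weight $r=\frac{r}{2}+\frac{r}{2}$, it occurs as the constant term $(\psi_\pm)_{-1}\psi_\pm$ of the (entirely regular) OPE $\psi_\pm(z)\psi_\pm(w)$ rather than as a ``leading singular term''---which is exactly what is needed, since strong generation requires normally ordered products, i.e., the $(-1)$st and lower modes.
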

\begin{proof}
The conformal dimension of $J_{3, 2+r} \otimes J$ is by construction
the desired $\frac{r}{2}$ and the quantum dimension is $(-1)^r$ so
that the resulting extension is always a VOA.  It is strongly
generated by the strong generators of $\vir(3, 2+r)\otimes V_L$
together with the fields of the two lowest-weight vectors of
$J_{3, 2+r} \otimes J$. However, by Proposition \ref{prop:J-OPE} together with the well-known lattice VOA operator products
the two strong generators of $V_L$ of conformal dimension $r$ must
be normal ordered products of the two lowest-weight vectors of
$J_{3, 2+r} \otimes J$ with themselves.
\end{proof}
\begin{remark}
Call the dimension $\frac{r}{2}$ fields $G^\pm$ and the dimension
one field $J$, then using the well-known operator products of
lattice VOAs it is easy to verify that with appropriate
normalization of $G^\pm$ and $J$, the OPE of $J$ with itself as well
as the one of $J$ with $G^\pm$ coincides with the OPE of the
corresponding fields of $W^{(2)}_r$ at level
$k=\frac{2-r^2+2r}{r-1}$ as given in \cite{FS}. In the case of the
OPE of $G^+$ with $G^-$ only the first two leading OPE coefficients
can be easily computed and they again coincide with those given in
\cite{FS}.
\end{remark}
\begin{remark}
Tweaking the lattice a bit, one gets a W super algebra, that is
believed to be an affine W-super algebra associated to
$\gs\gl(r|1)$. This belief is basically due to \cite{FS}.  Namely,
let
$$N= \sqrt{2(r+2)}\mathbb Z$$ and let $K$ be the unique
self-dual simple current of $V_N$, then the lowest-weight states of
$J_{3, 2+r} \otimes K$ have conformal dimension $\frac{r+1}{2}$ so
that
\[
\vir(3, 2+r)\otimes V_N\oplus J_{n+1, n+r} \otimes K
\]
is now a super VOA. In the case $r=2$ this is the $N=2$ super
conformal algebra, which has already known to be rational \cite{Ad3}.
\end{remark}

\subsection{More super VOAs}

We believe the following:
\begin{conj}\label{conj:osp}
$L_k(\gs\gl_2)$ is a subVOA of $L_k(\go\gs\gp(1|2))$ and
\begin{align}
\com\left(L_k(\gs\gl_2), L_k(\go\gs\gp(1|2)\right))\cong \vir(k+2, 2k+3)
\end{align}
for
positive integer $k$, especially $L_k(\go\gs\gp(1|2))$ is a
rational super VOA.
\end{conj}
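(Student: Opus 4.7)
The plan is to realize $L_k(\go\gs\gp(1|2))$ as a finite conformal extension of $L_k(\gs\gl_2)\otimes\vir(k+2,2k+3)$ and then invoke the machinery of Sections~2--3. Since $\gs\gl_2$ is the even part of $\go\gs\gp(1|2)$, the universal map $V_k(\gs\gl_2)\to V_k(\go\gs\gp(1|2))$ descends to an embedding $L_k(\gs\gl_2)\hookrightarrow L_k(\go\gs\gp(1|2))$ by simplicity. The Sugawara construction then produces a coset Virasoro vector $\omega_c=\omega_{\go\gs\gp(1|2)}-\omega_{\gs\gl_2}$ of central charge
\[
\frac{2k}{2k+3}-\frac{3k}{k+2}=-\frac{k(4k+5)}{(k+2)(2k+3)}=c_{k+2,2k+3}.
\]
That the Virasoro sub VOA generated by $\omega_c$ is the simple quotient $\vir(k+2,2k+3)$ follows from simplicity of $L_k(\go\gs\gp(1|2))$: any nontrivial singular vector of the universal Virasoro at this central charge would, via its module action, generate a proper nontrivial ideal.

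The central step is to establish a branching of the form
\[
L_k(\go\gs\gp(1|2))\cong\bigoplus_{s=0}^{k}L\bigl((k-s)\Lambda_0+s\Lambda_1\bigr)\otimes M_s
\]
as $L_k(\gs\gl_2)\otimes\vir(k+2,2k+3)$-modules, where the $M_s$ are simple $\vir(k+2,2k+3)$-modules and $M_0$ is the vacuum module (which is exactly statement $(2)$ of the conjecture). A natural candidate, forced by conformal weights and the parity of the odd generators (which must pair with $s=1$), is $M_s=\phi_{s+1,1}$; indeed a short computation gives $h_{L((k-s)\Lambda_0+s\Lambda_1)}+h_{s+1,1}=s(s+1)/2\in\Z$, matching the integer grading of $L_k(\go\gs\gp(1|2))$, and the $s=1$ piece has lowest weight one with spin-$\tfrac{1}{2}$ isotypic component of dimension two, exactly accommodating the odd generators. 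For $k=1$ only $s=0,1$ appear and the extension reduces to the simple current extension by $K_1\otimes J_{3,5}$; this is the content of Theorem~\ref{thm:osp} and is provable by direct character comparison. For general $k$, natural approaches are comparison of the Kac--Wakimoto character of $L_k(\go\gs\gp(1|2))$ against products of integrable $\widehat{\gs\gl}_2$-characters at level $k$ and $(k+2,2k+3)$ minimal model characters, or passage through a quantum Hamiltonian reduction.

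Once the branching is in hand, the right-hand side is a haploid commutative superalgebra in the semisimple ribbon tensor category $\cC$ of modules for the rational VOA $L_k(\gs\gl_2)\otimes\vir(k+2,2k+3)$. Theorem~\ref{thm:HKLanalog} then promotes this datum to a super VOA structure on $L_k(\go\gs\gp(1|2))$, with the required OPEs among the odd generators recovered by Proposition~\ref{prop:J-OPE} applied to the lowest weight vectors of the $s=1$ summand. Theorem~\ref{thm:modules} identifies its generalized module category with $\rep^0$ of the superalgebra in $\cC$. Since $\cC$ is semisimple with finitely many simple objects, so is $\rep^0$, and rationality of $L_k(\go\gs\gp(1|2))$ follows.

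The hard part is the branching formula for $k>1$. Conformal weights and parity pin down the shape of each summand once one knows that every integrable $\widehat{\gs\gl}_2$-module of level $k$ appears with multiplicity one paired with a simple Virasoro module, but neither multiplicity one nor simplicity of each $M_s$ can be extracted from the general extension machinery alone; both require genuine representation-theoretic input on $\widehat{\go\gs\gp}(1|2)_k$, most naturally through character identities specific to this affine Lie superalgebra.
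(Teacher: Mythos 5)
The statement you are addressing is Conjecture \ref{conj:osp}; the paper offers no proof of it in general and establishes only the case $k=1$ (Theorem \ref{thm:osp}), so the honest assessment is that your proposal, like the paper, stops short of a proof of the general statement. You have correctly isolated the missing ingredient: the branching decomposition of $L_k(\go\gs\gp(1|2))$ over $L_k(\gs\gl_2)\otimes\vir(k+2,2k+3)$, i.e.\ multiplicity one and simplicity of each coset module $M_s$ together with $M_0\cong\vir(k+2,2k+3)$ --- the last of which is literally assertion (2) of the conjecture, so the argument would be circular without independent input on $\widehat{\go\gs\gp}(1|2)_k$. Everything downstream of the branching (haploid commutative superalgebra in a semisimple ribbon category, Theorems \ref{thm:HKLanalog} and \ref{thm:modules}, rationality from semisimplicity of $\rep^0$) is sound, and your weight bookkeeping $h_{L((k-s)\Lambda_0+s\Lambda_1)}+h_{s+1,1}=s(s+1)/2$ and the central-charge match are correct consistency checks. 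One intermediate step is, however, not justified as written: you claim the Virasoro subVOA generated by $\omega_c$ is simple ``by simplicity of $L_k(\go\gs\gp(1|2))$,'' but a vertex subalgebra of a simple VOA need not be simple --- a singular vector of the universal Virasoro generates an ideal of that subalgebra, not of the ambient VOA --- so simplicity of the coset is part of what must be proved rather than a consequence.

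For $k=1$ the paper argues in the direction opposite to your top-down decomposition: it starts from the known rational VOA $L_1(\gs\gl_2)\otimes\vir(3,5)$, forms the order-two self-dual simple current extension by $L(\Lambda_1)\otimes J_{3,5}$ via Theorem \ref{thm:constructingVOSA}, and identifies the result with $L_1(\go\gs\gp(1|2))$ by explicit OPE computations on the five weight-one fields (Proposition \ref{prop:J-OPE} guaranteeing the key OPE coefficient is non-zero) together with simplicity of the extension; the coset identification then follows because the two candidate conformal vectors of the commutant coincide. That bottom-up route is available precisely because for $k=1$ the candidate decomposition is a simple current extension; for $k>1$ the summands $L((k-s)\Lambda_0+s\Lambda_1)$ with $0<s<k$ are not simple currents, so neither Theorem \ref{thm:constructingVOSA} nor Theorem \ref{thm:generalorderJ} produces the algebra structure, and one is thrown back on exactly the branching problem you identify as open.
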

The conjecture is motivated from \cite{CL}.  Namely, it was shown that
the universal coset VOA
$\com\left(V_k(\gs\gl_2), V_k(\go\gs\gp(1|2)\right))$ is just the
universal Virasoro algebra for generic $k$.  Also computational
evidence was given that integral $k$ are generic, and further it was
shown that the coset of the universal VOAs surjects on the coset of
the corresponding simple quotients. In other words, according to
\cite{CL} the conjecture is true if $L_k(\gs\gl_2)$ is a subVOA of
$L_k(\go\gs\gp(1|2))$ and if positive integer $k$ is generic.

Using the singlet and triplet algebras, one can construct interesting
new logarithmic VOAs, examples are algebras of Feigin-Semikhatov type
\cite{FS} constructed in \cite{Ad1, CRW}, but also the small $N=4$
super conformal algebra at central charge $-9$ \cite{Ad2}. Here, we
will give two further examples and in the same manner prove above
conjecture for $k=1$.

\begin{thm}\label{thm:osp} 
Conjecture \ref{conj:osp} is true for $k=1$. Also for $k=-1/2$ the commutant is
$\vir(3, 4)$.
\end{thm}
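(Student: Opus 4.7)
The plan is to construct $L_k(\go\gs\gp(1|2))$ as a self-dual $\Z_2$ simple current extension of $L_k(\gs\gl_2)\otimes \vir(k+2,2k+3)$, in direct analogy with the $\cW^{(2)}_r$ construction of the previous subsection.

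For $k=1$ I would first check the central charges match: $c(L_1(\gs\gl_2))+c_{3,5} = 1 - 3/5 = 2/5 = c(L_1(\go\gs\gp(1|2)))$. Take as simple current $J := K_1\otimes J_{3,5}$, where $K_1=L(\Lambda_1)$ has conformal weight $1/4$ and $\qdim(K_1)=1$ (unitary), while $J_{3,5}$ has conformal weight $3/4$ and $\qdim(J_{3,5})=(-1)^{3+5+1}=-1$. Then $J$ is self-dual with $h_J=1$ and $\qdim(J)=-1$; by Corollary \ref{cor:spinstatistics} one reads off $c_{J,J}=e^{2\pi i}/(-1)=-1$, so Theorem \ref{thm:constructingVOSA} endows
\[
V_e := \bigl(L_1(\gs\gl_2)\otimes \vir(3,5)\bigr)\oplus \bigl(K_1\otimes J_{3,5}\bigr)
\]
with the structure of a vertex operator superalgebra (a $\Z$-graded one, i.e.\ \emph{wrong statistics}).

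To identify $V_e\cong L_1(\go\gs\gp(1|2))$ I would argue as follows. The lowest weight subspace of $K_1\otimes J_{3,5}$ is two-dimensional, lies at conformal weight $1$, and transforms as the standard $\gs\gl_2$-representation under the horizontal $L_1(\gs\gl_2)$-action, precisely matching the odd generators $G^{\pm}$ of $\go\gs\gp(1|2)$. By Proposition \ref{prop:J-OPE} applied to the intertwining operator of type $V\choose J\,J$, the leading singularity of $G^+(z)G^-(w)$ is a nonzero vacuum contribution (giving the central $k/(z-w)^2$ term), while the simple-pole term lies in $\bigl(L_1(\gs\gl_2)\otimes\vir(3,5)\bigr)_1$, which by the universal coset computation of \cite{CL} must be the affine $\gs\gl_2$-current with the correct normalization for level $1$. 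This produces a surjection $V^1(\go\gs\gp(1|2))\twoheadrightarrow V_e$; simplicity of $V_e$ (as a $\Z_2$-extension of a simple VOA by a simple current) forces this surjection to factor through $L_1(\go\gs\gp(1|2))$ and in fact to be an isomorphism. The commutant of $L_1(\gs\gl_2)$ in $V_e$ is visibly $\vir(3,5)$, establishing Conjecture \ref{conj:osp} for $k=1$, and rationality of $L_1(\go\gs\gp(1|2))$ follows from rationality of $L_1(\gs\gl_2)\otimes\vir(3,5)$ together with Carnahan \cite{C}.

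For $k=-1/2$ the same template applies, now inside the (non-semisimple) vertex tensor category of admissible $L_{-1/2}(\gs\gl_2)$-modules, whose existence as a braided tensor category is guaranteed by Theorems \ref{thm:cond1}/\ref{thm:cond2} applied to a suitable full subcategory. The central charge count gives $c(\vir(3,4))=1/2=c(L_{-1/2}(\go\gs\gp(1|2)))-c(L_{-1/2}(\gs\gl_2))$. I would pick a self-dual simple current $K$ of $L_{-1/2}(\gs\gl_2)$ whose conformal weight combines with $h_{J_{3,4}}=1/2$ to land in $\tfrac12\Z$, and apply Theorems \ref{thm:generalorderJ}, \ref{cor:liftingprojective} and \ref{cor:inforderJlifting} to produce the extension; the same OPE-matching argument then identifies it with $L_{-1/2}(\go\gs\gp(1|2))$, whose commutant with $L_{-1/2}(\gs\gl_2)$ is therefore $\vir(3,4)$ (where we identify $\vir(3/2,2)\cong\vir(3,4)$ as the $c=1/2$ minimal model). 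The main obstacle will be the $k=-1/2$ case: $L_{-1/2}(\gs\gl_2)$ is non-rational, so one must work in the logarithmic tensor category and correctly identify the appropriate self-dual simple current $K$. The OPE-matching step, common to both cases, reduces by Propositions \ref{prop:J-OPE} and \ref{prop:qdimsimplecurrent} to a finite computation of leading OPE coefficients, tractable via the universal-coset arguments of \cite{CL}.
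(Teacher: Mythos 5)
Your $k=1$ argument is essentially the paper's: form the order-two simple current extension of $L_1(\gs\gl_2)\otimes\vir(3,5)$ by $K_1\otimes J_{3,5}$, note the extension is simple, check that the five weight-one fields close on a quotient of $V_1(\go\gs\gp(1|2))$, and conclude by simplicity and strong generation. The one place you are thinner than the paper is the OPE verification: the paper does not merely cite the generic-level coset result of \cite{CL} for the normalization of the simple-pole term, but carries out the computation explicitly using the lattice realization $L_1(\gs\gl_2)\cong V_{\sqrt{2}\Z}$ (with $x=\phi_{1/\sqrt2}$, $y=\phi_{-1/\sqrt2}$ of weight $1/4$) together with Proposition \ref{prop:J-OPE} to guarantee the non-vanishing of the leading coefficient $\ell$ in $J(z)J(w)$. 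Your appeal to the universal coset computation does not by itself pin down the level of the $\gs\gl_2$-current appearing in $G^+(z)G^-(w)$, so that step still has to be done by hand; but this is a finite computation and the overall strategy is the same.

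The genuine gap is in the $k=-1/2$ case. Your plan is to run the simple current machinery (Theorems \ref{thm:constructingVOSA}, \ref{thm:generalorderJ}, etc.) inside ``the logarithmic tensor category of admissible $L_{-1/2}(\gs\gl_2)$-modules, whose existence \ldots is guaranteed by Theorems \ref{thm:cond1}/\ref{thm:cond2}.'' That existence is not guaranteed: $L_{-1/2}(\gs\gl_2)$ is neither rational nor $C_2$-cofinite, and verifying the $C_1$-cofiniteness and grading hypotheses of Theorem \ref{thm:cond2} for a suitable module category of this VOA is an open input that the paper explicitly declines to assume (see the remark immediately following the proof: to use Theorem \ref{thm:constructingVOSA} here ``one first has to prove that Huang-Lepowsky-Zhang's theory can be invoked in this case''). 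The paper sidesteps this entirely by an indirect free-field argument: $L_{-1/2}(\gs\gl_2)\oplus J_{\gs\gl_2}\cong\cS(1)$ and $\vir(3,4)\oplus J_{\vir}\cong\cF(1)$, so the candidate extension is realized as the orbifold $(\cS(1)\otimes\cF(1))^G$ with $G=\{(0,0),(1,1)\}\subset\Z/2\Z\times\Z/2\Z$, which manifestly carries a super VOA structure; the OPE identification with $L_{-1/2}(\go\gs\gp(1|2))$ then proceeds as in the $k=1$ case. Without either this orbifold realization or an actual verification of the tensor-category hypotheses, your $k=-1/2$ argument does not go through as written.
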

\begin{proof}

(1) We first look at $k=1$.
We need to prove that:
$$ V = (L_1(\mathfrak{sl}_2)\otimes \vir(3,5)) \oplus 
(J_{\mathfrak{sl}_2}\otimes J_{\vir}) \cong L_1(\go\gs\gp(1|2)).$$
First note that $L_1(\mathfrak{sl}_2)\otimes \vir(3,5)$ is a
simple VOA, being a tensor product of simple VOAs. Simple current
extensions of simple VOAs are simple and hence, $V$ is simple.

It is well known that $\vir(3,5)$ is a finitely reductive vertex
operator algebra such that $\vir(3,5)\cong \vir(3,5)'$ and such that
$\vir(3,5)_n=0$ for all $n<0$. Therefore, we can invoke equation
\eqref{eq:non-degeneracy-useful}.  $L_1(\gs\gl_2)\cong V_{\sqrt{2}\Z}$
is a lattice VOA. Denote by $\phi_\lambda$ the vertex operator
associated to the lattice vector $\lambda$. Then the three currents
are $e=\phi_{\sqrt{2}}, f=\phi_{-\sqrt{2}}$ and the Heisenberg field
$h$, the self-dual simple currents has two fields
$x=\phi_{1/\sqrt{2}}, y=\phi_{-1/\sqrt{2}}$ of conformal dimension
$1/4$.  The simple current field (associated to the lowest weight
vector of $J_\text{Vir}$) of $\vir(3, 5)$ we denote by $J$.  Now,
consider the five dimension 1 fields:
$(e\otimes {\bf 1})(z),(f\otimes{\bf 1})(z),(h\otimes{\bf
  1})(z),(x\otimes J)(z),(y\otimes J)(z)$.  We know that
\begin{align*}
J(z)J(w) &\sim  (z-w)^{-3/2}(\ell + \dots), \qquad\qquad 
x(z)y(w) \sim (z-w)^{-1/2}(1 + (z-w)h(w)+ \dots),\\
x(z)x(w) &\sim (z-w)^{1/2}(e(w) + \dots),\qquad\qquad
y(z)y(w) \sim (z-w)^{1/2}(f(w) + \dots)
\end{align*}
and $\ell$ is non-zero by equation \eqref{eq:non-degeneracy-useful}.
Now, one can easily verify OPEs to prove that these five
fields generate a vertex subalgebra isomorphic to a quotient of
$V_1(\go\gs\gp(1|2))$.

Finally 
$\omega_{\go\gs\gp}-(\omega_{\gs\gl_2}\otimes{\bf 1}
)\in\com(L_1(\mathfrak{sl}_2),V)$ and ${\bf 1}\otimes\omega_{\vir}(3,5)\in\com(L_1(\mathfrak{sl}_2),V)$
are both conformal vectors and since
$\com(L_1(\gs\gl_2),V)=\vir(3,5)$
they coincide.
Therefore, we see that
these five dimension 1 fields strongly generate the entire $V$.
Hence, the entire $V$ is a quotient of $V_1(\go\gs\gp(1|2))$. Since
$V$ is simple, we get that $V\cong L_1(\go\gs\gp(1|2))$.

(2) Now we look at $k=-1/2$.
As before, let
$$ V = (L_{-1/2}(\mathfrak{sl}_2)\otimes \vir(3,4)) \oplus 
(J_{\gs\gl_2}\otimes J_{\vir}).$$
We know that
$L_{-1/2}(\mathfrak{sl}_2)\oplus J_{\mathfrak{sl}_2} \cong \cS(1)$ the
rank one $\beta\gamma$-VOA and hence
$L_{-1/2}(\mathfrak{sl}_2)\cong (\cS(1))^{\Z/2\Z}$. Also
$\vir(3,4)\oplus J_{\vir} \cong \cF(1)$, the free fermion super VOA
and thus $\vir(3,4)\cong (\cF(1))^{\Z/2\Z}$.  Therefore, the group
$\Z/2\Z\times\Z/2\Z$ acts by automorphisms on $\cS(1)\otimes \cF(1)$
and
$$V = (\cS(1)\otimes \cF(1))^G$$ where 
$G=\{(0,0),(1,1)\}\subset \Z/2\Z\times \Z/2\Z$.
Hence, $V$ is a super VOA.
By the same reasoning as in the previous case, $V$ is simple.  Just
like before, one can check the OPEs of the dimension 1 fields to prove
that these five fields generate a vertex subalgebra isomorphic to a
quotient of $V_{-1/2}(\go\gs\gp(1|2))$, which must be the entire $V$
by analogous arguments.
\end{proof}

\begin{remark}
In the $k=-1/2$ case, we haven't really used Theorem
\ref{thm:constructingVOSA}, instead we have given an indirect proof
that the ``simple current'' extension is a super VOA.  If one were to use
Theorem \ref{thm:constructingVOSA}, one first has to prove that
Huang-Lepowsky-Zhang's theory can be invoked in this case.
We expect the results of \cite{Miy} to be useful.
For Theorem \ref{thm:N=4} also we shall give an indirect proof.
\end{remark}

We would now like to analyze the irreducible modules of $L_1(\go\gs\gp(1|2))$.
\begin{lemma}\label{lem:inducedsimples}
Let $V_e=V\oplus J$ be a simple current extension by a 
simple current $J$ of finite order. Assume also
that $J\boxtimes W \not\cong W$ for a simple $V$-module $W$.
Let $W_e$ be a simple $V_e$-module that
contains a simple $V$-submodule $W$.  Then,
$W_e \cong \bigoplus_{i\in G} J^i\boxtimes W$, where
$G$ is the finite cyclic group generated by $J$.
\end{lemma}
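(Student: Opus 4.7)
The plan is to use the $V_e$-action on $W_e$ restricted to $W$ to build a surjection from the induced module $\cF(W)$ onto $W_e$, and then to show it is an isomorphism. Setting up the map, I restrict $\mu_{W_e}\colon V_e\boxtimes W_e\to W_e$ along $V_e\boxtimes W\hookrightarrow V_e\boxtimes W_e$ to obtain a $V$-module homomorphism
\begin{equation*}
\phi\colon V_e\boxtimes W\longrightarrow W_e.
\end{equation*}
Using $V_e=V\oplus J$ and distributivity of $\boxtimes$ over $\oplus$ (Proposition 4.24 of \cite{HLZ}), the source decomposes as $(V\boxtimes W)\oplus(J\boxtimes W)\cong W\oplus(J\boxtimes W)$, so $\phi=\phi_0\oplus\phi_1$, and $\phi_0$ is (up to the unit isomorphism) the given inclusion $W\hookrightarrow W_e$.

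Next, I would verify that $\phi$ is $V_e$-linear when the source carries the induced $V_e$-module structure of $\cF(W)$ as in \cite{KO}. This is precisely the associativity of $\mu_{W_e}$: for $v,x\in V_e$ and $w\in W$,
\begin{equation*}
\mu_{W_e}\bigl(v\boxtimes\mu_{W_e}(x\boxtimes w)\bigr)=\mu_{W_e}\bigl(\mu_{V_e}(v\boxtimes x)\boxtimes w\bigr).
\end{equation*}
Consequently $\ker\phi$ and $\im\phi$ are $V_e$-submodules; since $\im\phi$ contains $W\neq0$ and $W_e$ is simple, $\phi$ is surjective.

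For injectivity, I would show $J\boxtimes W\not\subset\ker\phi$. The $V_e$-action on $\cF(W)$ sends the summand $J\boxtimes(J\boxtimes W)$ isomorphically onto $V\boxtimes W\cong W$, via $J\boxtimes J\cong V$ (which is forced by the closure of $V_e=V\oplus J$ under product, since $J$ is simple). Thus $J\boxtimes W\subset\ker\phi$ would force $W\subset\ker\phi$, contradicting that $\phi_0$ is the inclusion. Hence $\phi_1\neq0$, and since $J\boxtimes W$ is a simple $V$-module (tensoring by a simple current preserves simplicity, cf.\ \cite{CLR}), $\phi_1$ is injective. By the hypothesis $J\boxtimes W\not\cong W$, the simple submodules $\phi_0(W)$ and $\phi_1(J\boxtimes W)$ of $W_e$ are non-isomorphic; their intersection is therefore a proper submodule of each, hence zero. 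Combining this with surjectivity gives $W_e\cong W\oplus(J\boxtimes W)$ as $V$-modules, which is the desired conclusion (here $G=\{1,J\}$ as $J$ is self-dual).

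The main obstacle is verifying $V_e$-linearity of $\phi$ cleanly inside the vertex tensor categorical framework; once this (essentially a restatement of associativity of the module action) is in place, the remainder of the argument is a formal manipulation combining simplicity of the tensor factors with the non-isomorphism hypothesis.
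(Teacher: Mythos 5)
Your argument is correct and is essentially the paper's proof in categorical packaging: the paper works with the spans $J^i\cdot W=\operatorname{Span}\{j_nw\}$ inside $W_e$ and uses the universal property of $\boxtimes$ to produce the surjections $J^i\boxtimes W\to J^i\cdot W$ (which are exactly your components $\phi_i$ of the map $\cF(W)\to W_e$), concluding surjectivity from simplicity of $W_e$ and directness from $J\boxtimes W\not\cong W$, just as you do. Your non-vanishing step --- pushing $J\boxtimes(J\boxtimes W)$ onto the $W$-summand via $J\boxtimes J\cong V$ --- is the same mechanism the paper uses (via Proposition 11.9 of \cite{DL}) to rule out $J^i\cdot W=0$, so the two proofs coincide in substance.
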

\begin{proof}
The proof is the same as the proof of Theorem 3.7 in \cite{La},
except that we do not require $V$ to be rational.
For $S\subset V_e$ let
\begin{align}
S\cdot W = \text{Span}\{j_nw\,|\,j\in S, w\in W, n\in\Z \}.
\end{align}
It is clear that $J^i\cdot W$ is a $V$-submodule of $W_e$ for each
$i\in G$.  Moreover, restriction of the $V_e$-module map provides an
intertwining operator, say $\cY$, of type
$J^i\cdot W \choose J^i\, W$.  Now, if for some $j\in J^i,w\in W$,
$j_nw=0$ for all $n$, then $J^i\cdot W =0$ by Proposition 11.9 of
\cite{DL}. It is easy to see that this implies $V_e\cdot W=0$.
Therefore, we conclude that $J^i\cdot W\neq 0$, and hence $\cY\neq 0$.
By universal property of tensor products, there exists a non-zero
$V$-module map $J^i\boxtimes W \rightarrow J^i\cdot W$.  This map is
clearly surjective. Since $J^i\boxtimes W$ is simple, this map is an
isomorphism. We identify $J^i\cdot W$ with $J^i\boxtimes W$. We have
that $\sum_{i\in G}J^i\boxtimes W$ is a submodule of $W_e$ and hence
$W_e=\sum_{i\in G}J^i\boxtimes W$. The sum is direct because of the
assumption that $J\boxtimes W\not\cong W$ for any simple $V$-module
$W$.
\end{proof}

\begin{remark}
The condition on the module $W_e$ of Lemma \ref{lem:inducedsimples}
is satisfied if $W_e$ has finite length as a $V$-module. 
See \cite{H5} for conditions under which this is guaranteed to happen.
\end{remark}

\begin{cor}\label{cor:ospmodules}
$L_1(\go\gs\gp(1|2))$ has precisely two inequivalent simple modules.
\end{cor}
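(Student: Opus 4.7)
The strategy is to identify $L_1(\go\gs\gp(1|2))$ with the simple current extension from Theorem~\ref{thm:osp} and then enumerate its simple modules via Corollary~\ref{cor:liftingsimple} and Lemma~\ref{lem:inducedsimples}. By Theorem~\ref{thm:osp},
\[
L_1(\go\gs\gp(1|2))\cong V_e=V'\oplus J,\qquad V'=L_1(\gs\gl_2)\otimes\vir(3,5),\quad J=L(\Lambda_1)\otimes J_{3,5},
\]
an order-two simple current extension with $h_J=\tfrac14+\tfrac34=1$. Since both tensor factors of $V'$ are rational, the simple $V'$-modules are exactly the eight outer tensor products $L(\Lambda_i)\otimes\phi_{1,s}$ for $i\in\{0,1\}$ and $s\in\{1,2,3,4\}$.

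First I would compute the $\langle J\rangle$-orbits. The unique simple current of $\vir(3,5)$ with $h=\tfrac34$ is $\phi_{1,4}$, so $J_{3,5}\cong\phi_{1,4}$, and the fusion rules $L(\Lambda_1)\boxtimes L(\Lambda_i)\cong L(\Lambda_{1-i})$ and $\phi_{1,4}\boxtimes\phi_{1,s}\cong\phi_{1,5-s}$ produce four size-two orbits
\[
\cO_s=\{L(\Lambda_0)\otimes\phi_{1,s},\ L(\Lambda_1)\otimes\phi_{1,5-s}\},\qquad s\in\{1,2,3,4\}.
\]
In particular $J\boxtimes W\not\cong W$ for every simple $V'$-module, which is precisely the hypothesis required to invoke Lemma~\ref{lem:inducedsimples}.

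Next I would apply Corollary~\ref{cor:liftingsimple}. Reading the Virasoro weights $h(\phi_{1,s})=((5-3s)^2-4)/60$, which give $0,-\tfrac{1}{20},\tfrac{1}{5},\tfrac{3}{4}$ for $s=1,2,3,4$ respectively, and taking $W=L(\Lambda_0)\otimes\phi_{1,s}$, the differences $h_{J\boxtimes W}-h_J-h_W$ evaluate to $0,-\tfrac12,-1,-\tfrac32$. Hence exactly the orbits $\cO_1$ and $\cO_3$ satisfy the integrality condition, and the induction functor $\cF$ yields two inequivalent simple $V_e$-modules: the vacuum $V_e$ itself and $\cF(L(\Lambda_0)\otimes\phi_{1,3})\cong(L(\Lambda_0)\otimes\phi_{1,3})\oplus(L(\Lambda_1)\otimes\phi_{1,2})$.

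Finally, to see that these exhaust the simple $V_e$-modules, I would invoke Lemma~\ref{lem:inducedsimples}: rationality of $V'$ ensures that any simple $V_e$-module restricts to a finite-length $V'$-module and hence contains a simple $V'$-submodule $W$, forcing the decomposition $W_e\cong W\oplus(J\boxtimes W)$; this is a genuine $V_e$-module only when $W$ lies in $\cO_1$ or $\cO_3$. The main work is the bookkeeping of orbits and the four integrality checks; the remaining ingredients (finite length via rationality, the Kac-table formula, the Virasoro simple-current fusion rule) are classical and already used elsewhere in the paper.
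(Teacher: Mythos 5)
Your proposal is correct and follows essentially the same route as the paper: identify $L_1(\go\gs\gp(1|2))$ with the extension from Theorem \ref{thm:osp}, use rationality of $L_1(\gs\gl_2)\otimes\vir(3,5)$ together with Lemma \ref{lem:inducedsimples} to reduce to $\langle J\rangle$-orbits of the eight simple modules of the tensor product, and apply the integrality criterion of Corollary \ref{cor:liftingsimple} to find that exactly the orbits of $L(\Lambda_0)\otimes\phi_{1,1}$ and $L(\Lambda_0)\otimes\phi_{1,3}$ lift. Your explicit verification of the hypothesis $J\boxtimes W\not\cong W$ and the numerical weight differences $0,-\tfrac12,-1,-\tfrac32$ are consistent with the paper's Table of conformal weights and its list $M_1,\dots,M_4$.
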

\begin{proof}

Let $V=L_1(\gs\gl_2) \otimes \vir(3, 5)$,
$J = L(\Lambda_1)\otimes J_{\vir}$ and let
$V_e=L_1(\go\gs\gp(1|2))\cong V\oplus J$.  From \cite{Li}, 
$V$ is rational. Now, we can invoke Lemma \ref{lem:inducedsimples}
to gather that any simple $V_e$-module is of the form
$W\oplus J\boxtimes W$ for some simple $V$-module $W$.
By Proposition 4.7.4 of \cite{FHL}, any simple module for
$L_1(\gs\gl_2) \otimes \vir(3, 5)$ 
is a tensor product of simple modules for $L_1(\gs\gl_2)$ 
and $\vir(3, 5)$.
Gathering the data from Table \ref{tab:data}, it is clear that
\[
M_1 = L(\Lambda_0)\otimes \phi_{1, 1},
M_2 = L(\Lambda_0)\otimes \phi_{1, 3}, 
M_3 = L(\Lambda_1)\otimes \phi_{1, 2},
M_4 = L(\Lambda_1)\otimes \phi_{1, 4}
\]
are the only $L_1(\gs\gl_2) \otimes \vir(3, 5)$-modules that lift to
$L_1(\go\gs\gp(1|2))$-modules by Corollary \ref{cor:liftingsimple}.
However,  $M_1$ and $M_4$ lift to isomorphic modules and so do $M_2$
and $M_3$.
\end{proof}

\begin{thm}\label{thm:N=4}
Let $\cW(2)$ be the $C_2$-cofinite $c=-2$ triplet algebra, then
$\cW(2)\otimes L_{-1/2}(sl_2)$ has a simple current extension
isomorphic to the small $N=4$ super Virasoro algebra at $c=-3$.
\end{thm}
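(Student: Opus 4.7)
The plan is to imitate the $k=-\tfrac{1}{2}$ case of Theorem \ref{thm:osp}, realising the extension as a diagonal $\Z/2\Z$-orbifold of a super VOA that manifestly contains the small $N=4$ algebra. Recall that $\cW(2)$ is the even part of the rank-one symplectic fermion super VOA $\text{SF}$, with $\text{SF} = \cW(2)\oplus X_1^-$ and the odd part $X_1^-$ spanned as a $\cW(2)$-module by the two weight-$1$ symplectic fermion generators $\chi^\pm$. Similarly $L_{-1/2}(\gs\gl_2) \cong \cS(1)^{\Z/2\Z}$ with $\cS(1) = L_{-1/2}(\gs\gl_2) \oplus J_{\gs\gl_2}$, the non-trivial summand spanned by the weight-$\tfrac{1}{2}$ fields $\beta,\gamma$. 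The diagonal $G = \{(0,0),(1,1)\} \subset \Z/2\Z\times\Z/2\Z$ thus acts by automorphisms on $\text{SF}\otimes\cS(1)$, and setting $J = X_1^-\otimes J_{\gs\gl_2}$ (of lowest conformal weight $\tfrac{3}{2}$) gives
\[
V_e := (\cW(2)\otimes L_{-1/2}(\gs\gl_2)) \oplus J = (\text{SF}\otimes\cS(1))^G,
\]
which is a simple vertex operator superalgebra of central charge $-3$ (simplicity because a simple current extension of a simple VOA is simple, and the two summands carry opposite $\Z_2$-parity).

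Next I would identify the standard generators of the small $N=4$ superconformal algebra inside $V_e$. The three currents of $L_{-1/2}(\gs\gl_2)\otimes\one$ provide a copy of $\widehat{\gs\gl_2}$ at level $-\tfrac{1}{2}$, which is the internal affine symmetry of small $N=4$ at $c=-3$ via the relation $c=6k$. The weight-$\tfrac{3}{2}$ top component of $J$ is four dimensional, spanned by the odd vectors $\chi^a \otimes \beta^b$ for $a,b\in\{+,-\}$ (writing $\beta^+=\beta$, $\beta^-=\gamma$), which transform as a bidoublet under $\widehat{\gs\gl_2}$ and supply the four supercurrents. A direct Wick-expansion computation starting from the elementary OPEs $\chi^+(z)\chi^-(w) \sim (z-w)^{-2}$ and $\beta(z)\gamma(w) \sim (z-w)^{-1}$ shows that these four fields, together with the conformal vector $\omega_e = \omega_{\cW(2)}\otimes\one + \one\otimes\omega_{\gs\gl_2}$, satisfy the defining OPEs of the small $N=4$ superconformal algebra at $c=-3$.

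This yields a vertex algebra morphism from the universal small $N=4$ algebra at $c=-3$ into $V_e$; since $V_e$ is simple, its image is isomorphic to the simple small $N=4$ algebra at $c=-3$. Conversely, the image visibly contains $\one \otimes L_{-1/2}(\gs\gl_2)$, and by an application of Proposition \ref{prop:J-OPE} to the self-dual simple current $J$, the three weight-$3$ triplet generators of $\cW(2)\otimes\one$ are recovered as suitable normally ordered bilinears of the four supercurrents; the supercurrents themselves generate the $\cW(2)\otimes L_{-1/2}(\gs\gl_2)$-module $J$. Hence the image already contains strong generators for both tensor factors and for the extending summand $J$, so it equals $V_e$ and the asserted isomorphism follows. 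The main technical obstacle is the explicit OPE verification in the middle paragraph, a mechanical but lengthy computation in the free-field realisation $\text{SF}\otimes\cS(1)$, closely parallel to known realisations of $N=4$ at other central charges, such as the $c=-9$ construction of \cite{Ad2}.
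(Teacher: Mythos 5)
Your proposal follows essentially the same route as the paper: realise the extension as the diagonal $\Z/2\Z$-orbifold $(\cA(1)\otimes\cS(1))^{G}$ with $G=\{(0,0),(1,1)\}$, verify the small $N=4$ OPEs at $c=-3$ by explicit free-field computation, and then check strong generation. The one step where your justification is weaker than what is actually needed is the recovery of the three weight-$3$ generators of $\cW(2)$: Proposition \ref{prop:J-OPE} only guarantees a non-zero identity coefficient at the most singular order of the $J$--$J$ OPE, not that the weight-$3$ fields occur among the normally ordered bilinears of the supercurrents, and the paper instead exhibits $s^+(-2)s^-+s^-(-2)s^+$ explicitly as a combination of products such as $(\beta s^+)_{-1}(\gamma s^-)$ --- a computation (or an equivalent one) that is genuinely required and is in any case part of the ``mechanical but lengthy'' OPE verification you already defer.
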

\begin{proof}
Let
$$V=X_1^+\otimes L_{-1/2}(\mathfrak{sl}_2) \oplus X_1^{-}\otimes
J_{\mathfrak{sl}_2}.$$
We have as before that
$L_{-1/2}(\mathfrak{sl}_2)\oplus J_{\mathfrak{sl}_2}\cong \cS(1)$ (the
$\beta\gamma$-VOA) and thus
$L_{-1/2}(\mathfrak{sl}_2)\cong (\cS(1))^{\Z/2\Z}$, but also
$X_1^+\oplus X_1^- \cong \cA(1)$, the rank one symplectic fermion
super VOA so that $X_1^+\cong (\cA(1))^{\Z/2\Z}$. Therefore, the group
$\Z/2\Z\times\Z/2\Z$ acts by automorphisms on $\cA(1)\otimes \cS(1)$
and $V = (\cA(1)\otimes \cS(1))^G$ where
$G=\{(0,0),(1,1)\}\subset \Z/2\Z\times \Z/2\Z$.  Hence, $V$ is a
vertex operator algebra.

By reasoning as before, $V$ is simple.

The small $N=4$ super Virasoro algebra is generated by three
$\mathfrak{sl}_2$ fields of weight 1, one Virasoro field of weight 2
and its four superpartners of weight $3/2$.  We have got the required
number of fields.

We must check that the OPEs match the ones for the small $N=4$.  We
denote the lowest weight states of $J_{\mathfrak{sl}_2}$ by
$\beta,\gamma$.  We denote the lowest weight states of $X_1^-$ by
$s^+,s^-$.

We know that:
\begin{align*}
\beta(z)\beta(w) &\sim 2e(w) + (z-w)(\beta(-3/2)\beta(-1/2){\bf 1})(w)+\dots\\
\gamma(z)\gamma(w) &\sim 2f(w)  + (z-w)(\gamma(-3/2)\gamma(-1/2){\bf 1})(w)+\dots\\
\beta(z)\gamma(w) &\sim -(z-w)^{-1} + h(w) + (z-w)(\beta(-3/2)\gamma(-1/2){\bf 1})(w) + (z-w)^2(\beta(-5/2)\gamma(-1/2){\bf 1})(w)+   \dots\\
\gamma(z)\beta(w) &\sim (z-w)^{-1} + h(w) + (z-w)(\gamma(-3/2)\beta(-1/2){\bf 1})(w)+  (z-w)^2(\gamma(-5/2)\beta(-1/2){\bf 1})(w)+\dots\\
s^+(z)s^-(w) &\sim (z-w)^{-2} + (s^+(-1)s^-(-1){\bf 1})(w) + (z-w)(s^+(-2)s^-(-1){\bf 1})(w)+\dots\\
s^-(z)s^+(w) &\sim -(z-w)^{-2}+ (s^-(-1)s^+(-1){\bf 1})(w) + (z-w)(s^-(-2)s^+(-1){\bf 1})(w)\dots\\
s^+(z)s^+(w)&\sim (z-w)(\dots)\\
s^-(z)s^-(w)&\sim (z-w)(\dots).
\end{align*}
Let $J^+=-\frac{1}{2}:\beta\beta:$, $J^-=\frac{1}{2}:\gamma\gamma$ and
$h=:\beta\gamma:$, then the OPE of these three is the operator product
algebra of $L_{-1/2}(\gs\gl_2)$ \cite{R}.  Let
\[
G^+= \beta\otimes s^+,\qquad  G^- =\gamma\otimes s^+, \qquad
\bar{G}^+= -\beta\otimes s^-, \qquad\bar{G}^- =\gamma\otimes s^-. 
\]
From the OPEs above, it is easy to calculate the $\lambda$-brackets as
in \cite{KRW}:

Since the OPE of $v_1,v_2$ for any $v_1,v_2\in\{G^+,G^-\}$ or
$v_1,v_2\in\{\bar{G}^+,\bar{G}^-\}$ is regular, their
$\lambda$-bracket is 0.  We also have the following OPEs:
\begin{align*}
G^+(z)\bar{G}^+(w) &\sim  (z-w)^{-2}\cdot 2e(w) + (z-w)^{-1}(\beta(-3/2)\beta(-1/2){\bf 1})(w)+\dots\\
G^-(z)\bar{G}^-(w) &\sim  (z-w)^{-2}\cdot 2f(w) + (z-w)^{-1}(\gamma(-3/2)\gamma(-1/2){\bf 1})(W)+\dots\\
G^+(z)\bar{G}^-(w) &\sim  -(z-w)^{-3}+ (z-w)^{-2}\cdot h(w) + (z-w)^{-1}(\beta(-3/2)\gamma(-1/2){\bf 1} - s^+(-1)s^-(-1){\bf 1})(w)+\dots\\
G^-(z)\bar{G}^+(w) &\sim  -(z-w)^{-3}- (z-w)^{-2}\cdot h(w) -(z-w)^{-1}(\gamma(-3/2)\beta(-1/2){\bf 1} + s^+(-1)s^-(-1){\bf 1})(w)+\dots
\end{align*}
We know that $\omega_{\cW (2)}= s^-(-1)s^+(-1){\bf 1}$ and
$\omega_{\mathfrak{sl}_2}=\dfrac{1}{2}[\beta(-3/2)\gamma(-1/2) -
\gamma(-3/2)\beta(-1/2)]{\bf 1}$.
So, the $\lambda$-brackets  come out to be:
\begin{equation}\nonumber
\begin{split}
[J^\pm{}_\lambda G^\mp] &= G^\pm,\qquad
[J^\pm{}_\lambda \bar{G}^\mp] = -\bar{G}^\pm,\qquad
[G^\pm{}_\lambda \bar{G}^\pm] = (\partial + 2\lambda)J^\pm,\\
[G^+{}_\lambda \bar{G}^-]&=-\dfrac{1}{2}\lambda^2 + \lambda J^0 + L + \dfrac{1}{2}\partial J^0,\qquad
[G^-{}_\lambda \bar{G}^+]=-\dfrac{1}{2}\lambda^2 - \lambda J^0 + L -  \frac{1}{2}\partial J^0.
\end{split}
\end{equation}
Therefore, we've got the correct $\lambda$-bracket structure for the
small $N=4$ super Virasoro algebra at $c=-3$.

Finally we verify that these fields strongly generate $V$. For this we have to check that we can obtain the element
$s^-(-2)s^+ + s^+(-2)s^-\in \cW(2)$ as a normally ordered polynomial in the other generators and their derivatives.  We know the following (the
subscript denotes the mode):
\begin{align*}
(\beta s^+)_{-1}(\gamma s^-) &= -s^+(-2)s^- + hs^+s^-+\beta(-5/2)\gamma \\
(\gamma s^+)_{-1}(\beta s^-) &= s^+(-2)s^- + hs^+s^-+\gamma(-5/2)\beta \\
(\beta s^-)_{-1}(\gamma s^+) &= -s^-(-2)s^+ + hs^-s^+-\beta(-5/2)\gamma \\
(\gamma s^-)_{-1}(\beta s^+) &= s^-(-2)s^+ + hs^-s^+-\gamma(-5/2)\beta.
\end{align*}
Therefore,
$$ 
(\gamma s^+)_{-1}(\beta s^-) + (\gamma s^-)_{-1}(\beta s^+) 
-(\beta s^+)_{-1}(\gamma s^-) - (\beta s^-)_{-1}(\gamma s^+) 
= s^+(-2)s^- + s^-(-2)s^+.
$$
\end{proof}

\subsection{Some orbifolds with categories of $C_1$-cofinite modules}

Our results apply to module categories of VOAs that are vertex tensor categories in the sense of Huang-Lepowsky. The main obstacle for the conditions of Theorem \ref{thm:cond2} is in verifying the $C_1$-cofiniteness of modules. We thus close this work with a few examples on this question in the context of orbifolds of free field algebras. 

Consider first the rank $n$ Heisenberg vertex algebra $\cH(n)$, whose full automorphism group is the orthogonal group $\text{O}(n)$. By \cite{DLMI}, there is a dual reductive pair decomposition $$
\cH(n) = \bigoplus_{\nu} L_{\nu} \otimes M^{\nu},$$ where the sum is over all finite-dimensional, irreducible $\text{O}(n)$-modules $L_{\nu}$, and the $M^{\nu}$'s are inequivalent, irreducible $\cH(n)^{\text{O}(n)}$-modules. 

The $C_1$-cofiniteness of the $\cH(n)^{\text{O}(n)}$-modules $M^{\nu}$ was established in \cite{LIII,LIV}, and we briefly sketch the proof. First, we may view $\cH(n)^{\text{O}(n)}$ as a deformation of the classical invariant ring $R = (\text{Sym} \bigoplus_{k\geq 0} V_k)^{\text{O}(n)}$, where $V_k \cong \mathbb{C}^n$ as $\text{O}(n)$-modules. In particular, $\cH(n)$ admits an $\text{O}(n)$-invariant good increasing filtration in the sense of Li \cite{LiII}, and $\text{gr}(\cH(n)^{\text{O}(n)}) \cong R$ as differential graded commutative rings. Using Weyl's first fundamental theorem of invariant theory for $\text{O}(n)$ \cite{We}, it is not difficult to find an (infinite) strong generating set for $\text{gr}(\cH(n)^{\text{O}(n)})$ consisting of an element in each weight $2,4,6,\dots$. A consequence is that the Zhu algebra of $\cH(n)^{\text{O}(n)}$ is abelian. This implies that all irreducible, admissible $\cH(n)^{\text{O}(n)}$-modules are highest-weight modules, i.e., they are generated by a single vector. In particular, this holds for each $M^{\nu}$ above.

It follows from Weyl's second fundamental theorem of invariant theory for $\text{O}(n)$ \cite{We} that the relation of minimal weight among the generators of $\cH(n)^{\text{O}(n)}$ occurs at weight $n^2+3n+2$. In \cite{LIII}, it was conjectured that this gives rise to a decoupling relation expressing the generator in weight $n^2+3n+2$ as a normally ordered polynomial in the generators of lower weight. Starting with this relation, it is easy to construct decoupling relations for all the higher weight generators, so that $\cH(n)^{\text{O}(n)}$ is of type $\cW(2,4,\dots, n^2+3n)$. In the case $n=1$, the fact that $\cH(1)^{\mathbb{Z}/2\mathbb{Z}}$ is of type $\cW(2,4)$ is a celebrated theorem of Dong and Nagatomo \cite{DN}, and this conjecture was verified for $n\leq 6$ in \cite{LIV}. Even though it remains open in general, the strong finite generation of $\cH(n)^{\text{O}(n)}$ was established for all $n$ in \cite{LIV}.

The proof that each $M^{\nu}$ is $C_1$-cofinite depends on the strong finite generation of $\cH(n)^{\text{O}(n)}$, together with the fact that the non-negative Fourier modes of the generators of $\cH(n)^{\text{O}(n)}$ preserve the filtration on $\cH(n)$. Note that Lemma 6.7 of \cite{LIII} is precisely the statement that each $M^{\nu}$ is $C_1$-cofinite according to Miyamoto's definition \cite{Miy}. This was originally proven modulo the above conjecture in \cite{LIII}, but the proof only requires that $\cH(n)^{\text{O}(n)}$ is strongly finitely generated. Therefore Lemma 6.7 of \cite{LIII} holds unconditionally.

Similar results have been established for several other orbifolds of free field algebras, and the proof is the same. The key ingredients are the strong finite generation of the orbifold and the fact that the non-negative Fourier modes of the generators preserve a filtration on the free field algebra. For the rank $n$ $\beta\gamma$-system $\cS(n)$, $\cS(n)^{\text{GL}(n)}$ and $\cS(n)^{\text{Sp}(2n)}$ are of types $\cW(1,2,\dots, n^2+2n)$ and $\cW(2,4,\dots, 2n^2+4n)$, respectively, and every irreducible submodule of $\cS(n)$ for either of these orbifolds is $C_1$-cofinite \cite{LI,LII,LIV}. The same holds for the rank $n$ $bc$-system $\cE(n)$ and the orbifold $\cE(n)^{\text{GL}(n)}$ which is isomorphic to $\cW(\mathfrak{g}\mathfrak{l}_n)$ with central charge $n$ \cite{FKRW}. Similarly, it holds for the free fermion algebra $\cF(n)$ and the orbifold $\cF(n)^{\text{O}(n)}$, which is of type $\cW(2,4,\dots, 2n)$ \cite{LV}. Finally, it holds for the rank $n$ symplectic fermion algebra $\cA(n)$ and the orbifolds $\cA(n)^{\text{Sp}(2n)}$ and $\cA(n)^{\text{GL}(n)}$, which are of types $\cW(2,4,\dots, 2n)$ and $\cW(2,3,\dots, 2n+1)$, respectively \cite{CLI}. In all these cases, the orbifolds have abelian Zhu algebras. This makes the arguments easier, but it is not essential and we expect the $C_1$-cofiniteness to hold for a  more general class of orbifolds of free field algebras. There are a few other examples in \cite{CL,CL3} where an explicit minimal strong generating set has been found using similar methods, including $(\cE(n) \otimes \cS(n))^{\text{GL}(n)}$, $(\cA(n) \otimes \cS(n))^{\text{Sp}(2n)}$, $(\cA(n) \otimes \cS(n))^{\text{GL}(n)}$, and $(\cH(n) \otimes \cF(n))^{\text{O}(n)}$. In these cases, all irreducible modules for the orbifold inside the ambient free field algebra can be shown to be $C_1$-cofinite using similar ideas.

\end{document}